\newtheorem{theorem}{Theorem}[section]
\newtheorem{proposition}[theorem]{Proposition}
\newtheorem{corollary}[theorem]{Corollary}
\theoremstyle{definition}
\newtheorem{remark}[theorem]{Remark}
\newtheorem{conjecture/question}[theorem]{Conjecture/Question}
\newtheorem{remark/definition}[theorem]{Remark/Definition}
\newtheorem{terminology/notation}[theorem]{Terminology/Notation}
\def\PP{{\textbf P}}
\def\OO{\mathcal{O}}
\def\cD{\mathcal{D}}
\def\L{\mathcal{L}}
\def\cM{\mathcal{M}}
\def\cU{\mathcal{U}}
\def\Pic0{{\rm Pic}^0(X)}
\def\ff{\overline{\mathcal{F}}}
\def\mm{\overline{\mathcal{M}}}
\def\kk{\overline{\mathcal{K}}}
\def\dd{\overline{\mathcal{D}}}
\def\cc{\overline{\mathcal{C}}}
\newcommand{\dblq}{{/\!/}}
\begin{document}
\title{The classification of universal Jacobians  over the moduli space of curves}

\author[G. Farkas]{Gavril Farkas}

\address{Humboldt-Universit\"at zu Berlin, Institut F\"ur Mathematik,  Unter den Linden 6
\hfill \newline\texttt{}
 \indent 10099 Berlin, Germany} \email{{\tt farkas@math.hu-berlin.de}}
\thanks{}

\author[A. Verra]{Alessandro Verra}
\address{Universit\'a Roma Tre, Dipartimento di Matematica, Largo San Leonardo Murialdo \hfill
\indent 1-00146 Roma, Italy}
 \email{{\tt
verra@mat.unirom3.it}}

\maketitle

A well-established principle of Mumford  asserts that all  moduli spaces of curves of genus $g>2$ (with or without marked points), are varieties of general type, except a finite number of cases occurring for relatively small genus, when these varieties tend to be unirational, or at least uniruled, see \cite{HM}, \cite{EH1}, \cite{FL}, \cite{Log}, \cite{V} for illustrations of this fact. In all known cases, the transition from uniruledness to being of general type is quite sudden. The aim of this paper is to determine the Kodaira dimension of the universal Jacobian of degree $g$ over $\mm_g$ for any genus $g$, in particular to highlight the surprising transition cases $g=10, 11$.

Let $\cc_{g, n}:=\mm_{g, n}/\mathfrak S_n$ be the universal symmetric product of degree $n$. Since the fibre of the projection map $\varphi: \cc_{g, n}\rightarrow \mm_g$ over a smooth curve $[C]\in \cM_g$ is birational to the $n$-th symmetric product $C_n$, it  follows trivially that $\cc_{g, n}$ is uniruled when $n>g$. The global Abel-Jacobi map $$\mathfrak{a}_g: \cc_{g, g}\dashrightarrow \overline{\mathfrak{Pic}}_{g}^g,$$
establishes a birational isomorphism between $\cc_{g, g}$ and (a compactification of) the degree $g$ universal Picard variety $\alpha_g: \overline{\mathfrak{Pic}}_g^g\rightarrow \mm_g$. For a smooth curve $[C]\in \cM_g$, the map $\varphi^{-1}([C])\rightarrow \mathrm{Pic}^{g-1}(C)$
factors through $C_{g}$: the Abel-Jacobi map $C_g\rightarrow \mathrm{Pic}^g(C)$ is the blow-up of $\mathrm{Pic}^g(C)$ along the Fitting ideal corresponding to the subscheme $W^1_g(C)$, whereas $\varphi^{-1}([C])\rightarrow C_g$ is an iterated blow-up along the diagonals. Thus, we may regard $\cc_{g, g}$ as a global blow-up of $\overline{\mathfrak{Pic}}_g^g$. Applying the additivity of Kodaira dimensions for abelian fibrations \cite{U} to the fibre space $\alpha_g$, we obtain that $\kappa(\overline{\mathfrak{Pic}}_g^g)=3g-3$ whenever $\mm_g$ is of general type. It is natural to wonder whether the equality $\kappa(\overline{\mathfrak{Pic}}_g^g)=\kappa(\mm_g)$ holds for every $g$. We answer this question in the negative. Our most picturesque result, concerns the transition cases in the birational classification of universal Jacobians:
\begin{theorem}\label{picard}
The universal Jacobian $\overline{\mathfrak{Pic}}_{10}^{10}$ has Kodaira dimension zero. The Kodaira dimension of $\overline{\mathfrak{Pic}}_{11}^{11}$ equals $19$.
\end{theorem}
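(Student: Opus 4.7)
The plan is to work on $\cc_{g,g}$, which by the Abel--Jacobi birationality recalled above is birational to $\overline{\mathfrak{Pic}}_g^g$ and thus has the same Kodaira dimension. First I would compute $K_{\cc_{g,g}}$ on a desingularisation by starting from the standard formula for $K_{\overline{\cM}_{g,g}}$ and averaging under the quotient $\overline{\cM}_{g,g}\to\cc_{g,g}$, with careful tracking of the ramification along the boundary divisors $\Delta_{0:S}$, $|S|\geq 2$, which are fixed pointwise by the corresponding transpositions in $\mathfrak{S}_g$. The result expresses $K_{\cc_{g,g}}$ as a $\mathbb{Q}$-linear combination of the descended Hodge class $\lambda$, the boundary classes $\delta_{\mathrm{irr}},\delta_1,\ldots,\delta_{\lfloor g/2\rfloor}$, the symmetric cotangent class $\eta:=\tfrac{1}{g!}\sum_i\psi_i$, and the ``big diagonal'' class $\Delta$. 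I would then introduce the global Brill--Noether divisor
$$\mathcal{BN}_g \;:=\; \overline{\{(C,D)\in \cc_{g,g}\,:\, h^0(\OO_C(D))\geq 2\}}\,,$$
which is Cartier because $W^1_g(C)\subset\mathrm{Pic}^g(C)$ has codimension two while its preimage under $C_g\to\mathrm{Pic}^g(C)$ is swept out by $\mathbb{P}^1$'s and so has codimension one in $C_g$. Porteous's formula applied to the universal curve then gives its class in the basis above.

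For $g=10$ the aim is to write $K_{\cc_{10,10}}$ as a $\mathbb{Q}$-effective combination of $\mathcal{BN}_{10}$, of the K3 divisor $\mathcal{K}_{10}$ pulled back from $\cM_{10}$, of the boundary divisors and of $\Delta$, in which every summand lies on an extremal ray of the effective cone and is therefore rigid; $\mathcal{K}_{10}$ being rigid is a precise incarnation of the uniruledness of $\cM_{10}$ (Chang--Ran). Transplanting this rigidity to $\cc_{10,10}$ would force $h^0(mK_{\cc_{10,10}})=1$ for every $m$ such that $mK$ is integral, whence $\kappa=0$. For $g=11$ the K3 divisor fills all of $\cM_{11}$ (every such curve lies on a K3 surface by Mukai) and so contributes a strictly positive summand to $K$; the plan is then to construct a dominant rational map $\pi:\cc_{11,11}\dashrightarrow\mathcal{Y}$ to a $19$-dimensional moduli space of general type, whose generic fibre has dimension $22=2g$ and Kodaira dimension zero (birationally a Jacobian of the curve times a projective space arising from a Mukai model of genus $11$), and to conclude $\kappa(\cc_{11,11})=\kappa(\mathcal{Y})=19$ by Viehweg-type additivity of Kodaira dimensions.

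The main obstacle will be pinning down the $19$-dimensional Iitaka base $\mathcal{Y}$ for $g=11$ and verifying that it is of general type: the most obvious candidate, the K3 moduli $\mathcal{F}_{11}$ of dimension $19$, is itself unirational and cannot serve as the Iitaka image, so a finer moduli (of Mukai data, or a suitable level cover of $\mathcal{F}_{11}$) must play this role. A secondary obstacle is the rigidity statement for $g=10$, which requires ruling out any ``hidden'' pluricanonical section arising from the symmetric-quotient structure of $\cc_{10,10}$ and therefore rather fine control of its effective cone beyond what is presently known on $\cM_{10}$.
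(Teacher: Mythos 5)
Your overall skeleton matches the paper's: pass to $\cc_{g,g}$, compute $K_{\cc_{g,g}}$ from $K_{\mm_{g,g}}$ via the $\mathfrak S_g$-quotient with ramification along $\widetilde{\Delta}_{0:2}$, and decompose $K_{\cc_{g,g}}$ as (the divisor you call $\mathcal{BN}_g$, i.e.\ $\widetilde{\cD}_g$) plus twice the pullback of a slope-$7$ class on $\mm_g$ plus boundary. But the step ``every summand lies on an extremal ray and is therefore rigid, forcing $h^0(mK)=1$'' has two genuine gaps. First, extremality does not imply rigidity, and rigidity of the summands does not imply rigidity of the sum: to conclude $|mK|=m\widetilde{\cD}_{10}+m\sum d_{0:c}\widetilde{\delta}_{0:c}+|m(\cdots)|$ one needs, for each piece to be peeled off, a \emph{covering curve} with negative intersection against that piece and intersection \emph{zero} against all the others; the paper constructs these explicitly as marked pencils of nodal curves on $K3$ surfaces (the curves $R$, $\Gamma_{ij}$, $R_T$ of Propositions \ref{extrem} and the genus-$11$ theorem). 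The residual class is then (up to boundary from $\mm_{10}$) $2\varphi^*(\kk_{10})$, whose rigidity comes from $R_{10}\cdot\kk_{10}=-1$ for a Lefschetz pencil on a $K3$ (Farkas--Popa), not from the unirationality of $\cM_{10}$ (which is Arbarello--Cornalba/Severi, not Chang--Ran). Second, to get the lower bound $\kappa\geq 0$ from a section of $K_{\cc_{10,10}}$ on the coarse space you must know that pluricanonical forms extend to a resolution; this requires a Reid--Tai analysis of the quotient singularities of $\cc_{g,g}$ (Theorem \ref{sing} in the paper), which your proposal omits entirely.

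For $g=11$ your ``main obstacle'' is a phantom created by a misconception: you do not need a $19$-dimensional Iitaka base of general type, and Viehweg-type additivity is not the right tool. After stripping $\widetilde{\cD}_{11}$ and the $\widetilde{\delta}_{0:c}$ as above, the residual class is sandwiched between pullbacks $\phi^*(B)$ and $\phi^*(B')$ of classes on $\mm_{11}$ that agree with a positive multiple of $\mathfrak{bn}_{11}$ modulo boundary. One then computes $\kappa(\mm_{11},\mathfrak{bn}_{11})=19$ directly: $\mathfrak{bn}_{11}$ contains the pullback of an ample class under the Mukai map $\mm_{11}\dashrightarrow\ff_{11}$ (giving $\kappa\geq 19$), while the $11$-dimensional family of Lefschetz pencils $R_{11}$ on a fixed $K3$, which satisfies $R_{11}\cdot B'=0$, is contracted by $|nB'|$ and gives $\kappa\leq 19$. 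The Iitaka dimension of a pullback of an ample class equals the dimension of the image irrespective of the birational type of the target, so the unirationality of $\ff_{11}$ is no obstruction. Also note the paper uses the Brill--Noether divisors $\mm^1_{11,6}$, $\mm^2_{11,9}$ rather than a ``$K3$ divisor'' in genus $11$ (where the $K3$ condition is divisorial only in genus $10$), and your claim that the generic fibre of $\cc_{11,11}\dashrightarrow\ff_{11}$ has Kodaira dimension zero is unsubstantiated (it fibres over a $\PP^{11}$ of curves with abelian-variety fibres, so $\kappa=0$ is far from clear and, fortunately, not needed).
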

It is  well-known that both $\mm_{10}$ and $\mm_{11}$ are unirational varieties, see \cite{AC2} and \cite{CR} respectively. We note that when $g\leq 11$, these are the only cases when $\cc_{g, n}$ has non-negative Kodaira dimension. Using the existence of certain \emph{Mukai models} of the moduli space of curves of genus $g<10$,  it easily follows that $\overline{\mathfrak{Pic}}_{g}^g$ is unirational for $g\leq 9$. In fact more can be said:
\begin{theorem}\label{symprod}
The moduli space $\cc_{g, n}$ is unirational for all $g<10$ and $n\leq g$. Furthermore, $\cc_{10, n}$ is uniruled for all $n\neq 10$; the space $\cc_{11, n}$ is uniruled for all $n\neq 11$.
\end{theorem}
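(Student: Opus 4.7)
I would treat the statement in three regimes. The first is $n>g$, which can be handled uniformly for all $g$. For a smooth curve $[C]\in \cM_g$ and a generic effective divisor $D$ of degree $n>g$, Riemann--Roch together with the generic vanishing of $h^1$ gives $h^0(\OO_C(D))=n-g+1$, so the Abel--Jacobi morphism $C_n\to \mathrm{Pic}^n(C)$ is birationally a $\PP^{n-g}$-bundle. Globalizing, the projection $\cc_{g,n}\dashrightarrow \overline{\mathfrak{Pic}}_g^n$ exhibits $\cc_{g,n}$ as generically a $\PP^{n-g}$-bundle over the universal degree-$n$ Picard variety, hence uniruled. This settles $\cc_{10,n}$ for $n>10$ and $\cc_{11,n}$ for $n>11$.

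Next I would prove unirationality for $g\leq 9$ and $n\leq g$ using Mukai's models of low-genus canonical curves as transverse linear sections of rational homogeneous varieties: the spinor tenfold for $g=7$, the Grassmannian $G(2,6)\subset\PP^{14}$ for $g=8$, and the Lagrangian Grassmannian $LG(3,6)\subset\PP^{13}$ for $g=9$; classical projective realizations (plane quartic for $g=3$, $(2,3)$-complete intersections in $\PP^3$ for $g=4$, and so on) play the analogous role in genus $\leq 6$. Denoting the ambient homogeneous variety by $Y_g\subset \PP^{N_g}$, consider the incidence variety
\[
\mathcal I=\bigl\{(\Lambda,p_1,\dots,p_n) : \Lambda \text{ a } (g-1)\text{-plane in } \PP^{N_g},\ p_1,\dots,p_n\in Y_g\cap\Lambda\bigr\}
\]
with its natural dominant rational map $(\Lambda,p_1,\dots,p_n)\mapsto ([Y_g\cap\Lambda],p_1,\dots,p_n)$ to $\cc_{g,n}$. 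Projecting $\mathcal I\to Y_g^n$ realizes $\mathcal I$ as a Grassmannian bundle (each fibre is the variety of $(g-1)$-planes through $n$ prescribed points) over the rational variety $Y_g^n$; so $\mathcal I$ is rational, and $\cc_{g,n}$ is unirational.

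For $g\in\{10,11\}$ and $n<g$, I would invoke Mukai's realization of a general curve of genus $g\leq 11$ as a linear section of a polarized K3 surface of the same genus. Let $\mathcal P_{g,n}$ denote the parameter space of tuples $(S,L,C,p_1,\dots,p_n)$ with $(S,L)$ a polarized K3 of genus $g$, $C\in|L|$ smooth, and $p_1,\dots,p_n\in C$; the dimension count $19+g+n\geq 3g-3+n$ (valid precisely when $g\leq 11$) combined with the dominance of $(S,L,C)\mapsto[C]$ onto $\mm_g$ yields a dominant rational map $\mathcal P_{g,n}\dashrightarrow \cc_{g,n}$. Fibering $\mathcal P_{g,n}$ over $\{(S,L,p_1,\dots,p_n)\}$ (the $n$-fold relative fibre product of the universal K3 over the moduli of polarized K3's), the generic fibre is the linear subsystem $|L|_{p_1,\dots,p_n}\cong\PP^{g-n}$, positive-dimensional precisely when $n<g$. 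Hence $\mathcal P_{g,n}$ is uniruled, and so is its image $\cc_{g,n}$.

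The crux is the third step: uniruledness of $\cc_{g,n}$ for $n<g$ does not follow formally from $\mm_g$ being unirational, and genuinely requires the geometric input that the ambient K3 carries positive-dimensional linear subsystems through $n$ general points. Theorem \ref{picard} confirms that this argument breaks down exactly at $n=g$ for $g=10,11$, pinpointing the transition identified in the introduction.
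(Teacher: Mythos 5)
Your first two steps are sound and essentially coincide with the paper's argument: the case $n>g$ is the trivial $\PP^{n-g}$-bundle structure of $C_n$ over $\mathrm{Pic}^n(C)$, and for $7\leq g\leq 9$ (and classically for lower genus) the paper uses exactly your incidence variety, a Grassmann bundle over $V_g^n$, to get unirationality of $\mm_{g,n}$ and hence of $\cc_{g,n}$. Your treatment of $g=11$, $n\leq 10$ via pencils of curves on the (unique) K3 surface through $n$ general marked points is also correct and is in substance the argument of \cite{FP}, which the paper simply cites.

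The genuine gap is the case $g=10$, $n\leq 9$. Your third step rests on ``the dominance of $(S,L,C)\mapsto[C]$ onto $\mm_g$,'' justified by the dimension count $19+g\geq 3g-3$. But for $g=10$ that map is \emph{not} dominant: by Mukai's theorem (\cite{M1} Theorem 0.7) the general curve of genus $10$ does not lie on any K3 surface, and the image of $(S,L,C)\mapsto[C]$ is precisely the divisor $\kk_{10}\subset\mm_{10}$, of dimension $26<27$. The dimension count only bounds the image from above; it cannot certify dominance, and genus $10$ is exactly the case where the naive expectation fails — this failure is the organizing phenomenon of the whole paper (it is why $s(\mm_{10})=7$, why $\kappa(\cc_{10,10})=0$, and why Proposition \ref{extrem} needs a separate construction for $g=10$ using curves $C_{ij}$ nodal at an identified pair of marked points, embedded in K3 surfaces of genus $11$). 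Consequently your parameter space $\mathcal P_{10,n}$ maps onto a proper subvariety of $\cc_{10,n}$ and its uniruledness says nothing about $\cc_{10,n}$ itself. The paper instead invokes \cite{FP}, where the uniruledness of $\mm_{10,n}$ for $n\leq 9$ is established by a different construction; to complete your proof you would need either to reproduce such an argument (e.g.\ via suitable plane or surface models available for the general genus $10$ curve) or to cite that result.
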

For higher $g$, we show that $\cc_{g, g}$ is of the maximal Kodaira dimension it could possibly have, in view of Iitaka's \emph{easy addition} inequality for fibre spaces
$$\kappa(\cc_{g, g})\leq \mbox{dim}(\mm_g)+\kappa\bigl(\varphi^{-1}([C])\bigr)=3g-3.$$
\begin{theorem}\label{jacgentype}
For $g>11$, the Kodaira dimension of $\overline{\mathfrak{Pic}}^{g}_{g}$ is equal to $3g-3$.
\end{theorem}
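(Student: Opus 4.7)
The plan is to strengthen the easy-addition upper bound $\kappa(\overline{\mathfrak{Pic}}_g^g) \leq 3g-3$ recorded in the introduction into an equality by proving that the canonical class of $\overline{\mathfrak{Pic}}_g^g$ is big whenever $g > 11$. I would work on the birational model $\cc_{g,g} = \mm_{g,g}/\mathfrak S_g$, where the canonical divisor admits an explicit expression in terms of the Hodge class $\lambda$, the boundary classes $\delta_i$ and $\delta_{i:S}$ of $\mm_{g,g}$, the $\psi$-classes at the marked points, and a ramification correction along the pairwise diagonals coming from the symmetric quotient.

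For genera where $\mm_g$ is already known to be of general type, in particular all $g \geq 22$, the theorem is essentially formal: the additivity of Kodaira dimension for abelian fibrations, applied to the smooth locus of $\alpha_g\colon \overline{\mathfrak{Pic}}_g^g \to \mm_g$, upgrades $\kappa(\mm_g) = 3g-3$ to $\kappa(\overline{\mathfrak{Pic}}_g^g) \geq 3g-3$ and matches the easy-addition bound. In the remaining window $12 \leq g \leq 21$, where $\mm_g$ is not known to be of general type, one must exhibit effective divisor classes on $\overline{\mathfrak{Pic}}_g^g$ which do not descend to the base. The two key ingredients are the relative theta class $\Theta$ on the universal Jacobian, which restricts to a principal polarization on every fiber of $\alpha_g$ and a positive multiple of which becomes effective after twisting by the boundary, and the extra boundary divisors contributed by the compactified Jacobian fibers over $\delta_0 \subset \mm_g$, where $\alpha_g$ is not smooth.

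Concretely, the target is a decomposition
\[
K_{\overline{\mathfrak{Pic}}_g^g} \equiv \alpha_g^{*}L + c\,\Theta + E,
\]
with $L$ a pseudo-effective $\QQ$-class on $\mm_g$, $c > 0$, and $E$ an effective combination of these extra boundary divisors. The ampleness of $\Theta$ on every fiber of $\alpha_g$ together with an Iitaka-type dimension count then pushes the Iitaka dimension of the right-hand side up to the maximum value $3g-3$.

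The main obstacle is the numerical bookkeeping in this intermediate range: one needs to match the weights of $\Theta$ and of the boundary contributions with the coefficients appearing in the canonical class formula on $\mm_{g,g}$ and with the ramification of $\mm_{g,g} \to \cc_{g,g}$ along the pairwise diagonals. The threshold $g = 11$ is precisely the value where the coefficient of $\Theta$ in such a decomposition turns positive: below this threshold the same computation collapses and reproduces the values $\kappa(\overline{\mathfrak{Pic}}_{10}^{10}) = 0$ and $\kappa(\overline{\mathfrak{Pic}}_{11}^{11}) = 19$ of Theorem \ref{picard}, while above it the strict positivity of $c$ immediately forces $K_{\overline{\mathfrak{Pic}}_g^g}$ to be big, completing the argument.
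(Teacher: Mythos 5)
Your reduction for $g\geq 22$ via Ueno's additivity is fine, but in the range $12\leq g\leq 21$ --- which is the actual content of the theorem --- the decomposition you propose cannot exist. Over a general $[C]\in \cM_g$ the fibre $F$ of $\alpha_g$ is the abelian variety $\mathrm{Pic}^g(C)$, so adjunction for the smooth part of the fibration gives $K_{\overline{\mathfrak{Pic}}_g^g}|_F = K_F \equiv 0$. Restricting your identity $K\equiv \alpha_g^*L+c\,\Theta+E$ to $F$ kills the pulled-back term, and $E$ (whose components do not contain a general fibre) restricts to an effective class, leaving $0\equiv c\,\Theta|_F+E|_F$ with $\Theta|_F$ ample. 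This forces $c=0$ for \emph{every} $g$: a fibrewise-ample class can never appear with positive coefficient in the canonical class of an abelian fibration, so there is no threshold at $g=11$ where $c$ ``turns positive.'' With $c=0$ your decomposition only yields $\kappa\geq \kappa(\mm_g,L)$, which gives $3g-3$ only if $L$ is big --- and you have produced no candidate for a big $L$, only a pseudo-effective one.

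The paper's mechanism is genuinely different on both points where your plan breaks. First, it works on the model $\cc_{g,g}$, whose fibre over $[C]$ is a blow-up of $C_g$ rather than $\mathrm{Pic}^g(C)$; the fibrewise canonical class is then not trivial but equals the Abel--Jacobi exceptional divisor, and globally this is $\widetilde{\cD}_g$ --- a rigid, extremal divisor, i.e.\ the exact opposite of a relatively ample one. Its class relation $\widetilde{\cD}_g\equiv -\widetilde{\lambda}+\widetilde{\psi}-(\mathrm{boundary})$ converts the $\widetilde{\psi}$ appearing in $K_{\cc_{g,g}}$ into $\widetilde{\lambda}+\widetilde{\cD}_g$, giving $K_{\cc_{g,g}}\equiv 14\widetilde{\lambda}+\widetilde{\cD}_g-2\widetilde{\delta}_{\mathrm{irr}}+\cdots$. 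Second, to absorb $-2\widetilde{\delta}_{\mathrm{irr}}$ one needs an effective divisor $D$ on $\mm_g$ of slope $s(D)<7$; this is precisely where the hypothesis $g\geq 12$ enters (Brill--Noether divisors give $s=6+12/(g+1)<7$ when $g+1$ is composite, and the case $g=12$ requires the construction of \cite{FV}). One then gets $K_{\cc_{g,g}}\equiv (14-2s(D))\varphi^*(\lambda)+(\mathrm{effective})$ with a strictly positive coefficient on the big class $\varphi^*(\lambda)$, whence $\kappa\geq \kappa(\mm_g,\lambda)=3g-3$. Finally, none of this bounds $\kappa$ from below unless pluricanonical forms extend over the singularities of the moduli space; that is Theorem \ref{sing} (a Reid--Tai computation), which your proposal omits entirely.
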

Theorems \ref{picard}, \ref{symprod}, \ref{jacgentype} highlight the fact that $\overline{\mathfrak{Pic}}_{g}^g$ does not capture the intricate transition of $\mm_g$ from uniruledness to general type that occurs in the range $17\leq g\leq 21$.
\vskip 4pt
We describe the main steps in the proof of Theorems \ref{picard}, \ref{symprod} and \ref{jacgentype}. A key role is played by the effective divisor
$$\cD_g:=\{[C, x_1, \ldots, x_g]\in \cM_{g, g}: h^0\bigl(C, \OO_C(x_1+\cdots+x_g)\bigr)\geq 2\}.$$
The  class of the closure of $\cD_g$ inside $\mm_{g, g}$ has been computed, see \cite{Log} Theorem 5.4, or \cite{F1} Theorem 4.6, for an alternative proof:
$$\dd_g\equiv -\lambda+\sum_{i=1}^g \psi_i-\sum_{i=0}^{[g/2]} \sum_{T\subset \{1, \ldots, g\}} {|\#(T)-i|+1\choose 2}\delta_{i: T}\in \mbox{Pic}(\mm_{g, g}).$$ The divisor $\dd_g$ is $\mathfrak S_g$-invariant under the action permuting the marked points, thus if $\pi:\mm_{g, g}\rightarrow \cc_{g, g}$ is the quotient map, there exists an effective divisor $\widetilde{\cD}_g\in \mbox{Eff}(\cc_{g, g})$ such that $\dd_g=\pi^*(\widetilde{\cD}_g)$. Note that $\widetilde{\cD}_g$ is an exceptional divisor of the rational Abel-Jacobi map $\mathfrak{a}_g:\cc_{g, g} \dashrightarrow \overline{\mathfrak{Pic}}_g^g$, and as such, it is uniruled and an extremal point of $\mbox{Eff}(\cc_{g, g})$ \footnote{We recall that if $f:X\dashrightarrow Y$ is a birational contraction between $\mathbb Q$-factorial normal projective varieties, the irreducible components of the exceptional locus of $f$ give rise to edges  of $\overline{NE}^1(X)$. In the case of the divisor $\widetilde{\cD}_g$ we shall also prove this assertion directly using the geometry of $\cc_{g, g}$, see Proposition \ref{ajpencil}. }.

In Theorem \ref{sing}, we prove that for $g\geq 4$ pluri-canonical forms on $\cc_{g, g, \mathrm{reg}}$ extend to any desingularization of $\cc_{g, g}$. Thus in order to bound $\kappa(\cc_{g, g})$ from below, it suffices to exhibit sufficiently many global sections of $K_{\cc_{g, g}}$.  To that end, we choose an effective divisor class $D\equiv a\lambda-\sum_{i=0}^{[g/2]}b_i\delta_i \in \mbox{Eff}(\mm_g)$ of small slope $s(D):=a/\mbox{min}_{i=0}^{[g/2]} b_i$. For composite $g+1$, one can take $D=\mm_{g, d}^r$ to be the closure of the Brill-Noether
divisor of curves with a $\mathfrak g^r_d$, where $\rho(g, r, d)=g-(r+1)(g-d+r)=-1$; there exists a constant $c_{g, r, d}>0$, such that \cite{EH2},
$$\mm_{g, d}^r\equiv c_{g, r, d}\cdot  \mathfrak{bn}_g:\equiv c_{g, r, d}\Bigl((g+3)\lambda-\frac{g+1}{6}\delta_0-\sum_{i=1}^{[g/2]} i(g-i)\delta_i\Bigr)\in \mathrm{Pic}(\mm_g),$$
where the previous formula is used to define the class $\mathfrak{bn}_g$ proportional to \emph{all} Brill-Noether divisors on $\mm_g$.
In particular $s(\mm_{g, d}^r)=6+12/(g+1)$. By linear interpolation, we find  an effective divisor $E$ on $\cc_{g, g}$ supported  along $\widetilde{\cD}_g$, $\varphi^*(D)$ and the boundary of $\cc_{g, g}$, such that $$K_{\cc_{g, g}}=\bigl(14-2s(D)\bigr)\ \varphi^*(\lambda)+E.$$ Whenever $s(D)<7$ (and such a divisor $D\subset \mm_g$ can be chosen exactly when $g\geq 12$, see \cite{FV} Theorem 6.1 for the particularly difficult case $g=12$), the following inequality holds:
$$\kappa(\cc_g)\geq \kappa\Bigl(\cc_g, (14-2s(D))\varphi^*(\lambda)\Bigr)=\kappa(\mm_g, \lambda)=3g-3.$$
Since the opposite inequality is immediate, this proves Theorem \ref{jacgentype}. We summarize this discussion by linking $\cc_{g, g}$ to the slope $s(\mm_g):=\mbox{inf}\{s(D): D\in \mathrm{Eff}(\mm_g)\}$ of the moduli space of curves.
\begin{proposition} \label{slope}
Assume $s(\mm_g)<7$ for a given genus $g$. Then $\kappa(\cc_{g, g})=3g-3$.
\end{proposition}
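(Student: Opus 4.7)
The upper bound $\kappa(\cc_{g,g}) \leq 3g-3$ is immediate from Iitaka's easy addition applied to the fibration $\varphi\colon \cc_{g,g}\to \mm_g$, since a generic fibre is birational to the symmetric product $C_g$ of Kodaira dimension zero (the Abel--Jacobi map realizes it as an iterated blow-up of an abelian variety). The task is therefore to prove the reverse inequality $\kappa(\cc_{g,g})\geq 3g-3$.

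My first step is to apply Theorem \ref{sing}, which ensures that pluri-canonical forms on $\cc_{g,g,\mathrm{reg}}$ extend to any desingularization. This legitimizes the computation of $\kappa(\cc_{g,g})$ by exhibiting global sections of $m K_{\cc_{g,g}}$ via divisor-class manipulations on $\cc_{g,g}$ itself, without having to track discrepancies from the quotient singularities of $\pi\colon\mm_{g,g}\to \cc_{g,g}$ along the big diagonals.

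The core computation is the identity
\begin{equation*}
K_{\cc_{g,g}} \;=\; \bigl(14-2\,s(D)\bigr)\,\varphi^*(\lambda) \;+\; E,
\end{equation*}
where $D\in\mathrm{Eff}(\mm_g)$ has $s(D)<7$ (given by hypothesis) and $E$ is an effective $\mathbb Q$-divisor supported on $\widetilde{\cD}_g$, on $\varphi^*(D)$ and on the boundary of $\cc_{g,g}$. I would establish it by linear interpolation: write out $K_{\cc_{g,g}}$ from the known formula for $K_{\mm_{g,g}}$ together with the ramification of $\pi$ along the diagonals, expand $\widetilde{\cD}_g$ via the formula for $\dd_g$ after $\mathfrak S_g$-symmetrization, and solve a linear system for positive coefficients $\alpha,\beta$ of $\widetilde{\cD}_g$ and $\varphi^*(D)$ and non-negative coefficients of the various boundary divisors. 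The slope hypothesis $s(D)<7$ is exactly what is needed so that the $\delta_0$-coefficient of the residual class can be made non-negative after subtracting $(14-2s(D))\,\varphi^*(\lambda)$.

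Granted the identity, Iitaka additivity for $\varphi$ combined with the fact that $\lambda$ has Iitaka dimension $3g-3$ on $\mm_g$ (via the generically finite Torelli map to the projective Satake compactification of $\mathcal A_g$) yields
\begin{equation*}
\kappa(\cc_{g,g}) \;\geq\; \kappa\bigl(\cc_{g,g},\,(14-2s(D))\varphi^*(\lambda)\bigr) \;=\; 3g-3.
\end{equation*}
The main obstacle is the effectivity of $E$ in the interpolation: it requires a combinatorial verification that the boundary coefficients $\binom{|\#(T)-i|+1}{2}$ appearing in $\dd_g$, together with the $\delta_i$-coefficients of $D$ scaled appropriately by $1/s(D)$, simultaneously absorb the boundary contributions of $K_{\mm_{g,g}}$ and the diagonal ramification of $\pi$, uniformly across all subsets $T\subset\{1,\ldots,g\}$ and all indices $i$.
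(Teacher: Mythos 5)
Your argument is correct and follows the paper's own route: extend pluricanonical forms via Theorem \ref{sing}, write $K_{\cc_{g,g}}=\bigl(14-2s(D)\bigr)\varphi^*(\lambda)+E$ with $E$ effective and supported on $\widetilde{\cD}_g$, $\varphi^*(D)$ and the boundary (the coefficient $14-2s(D)>0$ being exactly the point of the hypothesis $s(\mm_g)<7$), and conclude from $\kappa(\mm_g,\lambda)=3g-3$ together with easy addition for the upper bound. This is essentially identical to the paper's proof, including your (correct) observation that the only real work is the combinatorial verification of effectivity of $E$, which the paper likewise leaves to "linear interpolation."
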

This highlights that the birational geometry of $\cc_{g, g}$ is governed by a linear series on $\mm_g$ of slope $7$, rather than the canonical linear series, for which $s(K_{\mm_g})=13/2$.
\vskip 4pt

We discuss some of the cases $g\leq 11$. When $7\leq g\leq 9$, there exists a birational contraction of $\dd_g$ under a birational map $$f_g: \mm_{g, g} \dashrightarrow \mathfrak{M}_{g, g},$$ where  $\mathfrak{M}_{g, g}$ is a GIT model of $\mm_{g, g}$ emerging from Mukai's constructions of canonical curves as sections of homogeneous varieties \cite{M1}, \cite{M3}, \cite{M4}. In genus $11$, we prove a stronger result, concerning the birational type of both $\cc_{11, 11}$ and its covering $\mm_{11, 11}$:


\begin{theorem}\label{gen11}
One has that $\kappa(\mm_{11, 11})=\kappa(\cc_{11, 11})=19$.
\end{theorem}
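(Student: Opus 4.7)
The plan proceeds in three steps, combining Theorem~\ref{picard} with Iitaka's easy addition applied to a composite fibration.

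First, the identity $\kappa(\cc_{11,11})=19$ is immediate. The Abel--Jacobi map $\mathfrak{a}_{11}:\cc_{11,11}\dashrightarrow\overline{\mathfrak{Pic}}_{11}^{11}$ is a birational isomorphism, because on a general fibre over $[C]\in\cM_{11}$ it restricts to the blow-up $C_{11}\to\mathrm{Pic}^{11}(C)$ along the Fitting ideal of $W^1_{11}(C)$, as discussed in the introduction. Theorem~\ref{sing} ensures that Kodaira dimension is birationally invariant for these singular varieties, so by Theorem~\ref{picard} we obtain $\kappa(\cc_{11,11})=\kappa(\overline{\mathfrak{Pic}}_{11}^{11})=19$. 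The lower bound $\kappa(\mm_{11,11})\geq 19$ follows because $\pi:\mm_{11,11}\to\cc_{11,11}$ is a finite quotient by $\mathfrak S_{11}$ and $\pi^*$ injects pluri-canonical sections $H^0(\cc_{11,11},mK_{\cc_{11,11}})\hookrightarrow H^0(\mm_{11,11},mK_{\mm_{11,11}})$ for all $m\geq 1$.

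For the reverse inequality $\kappa(\mm_{11,11})\leq 19$, I would apply Iitaka's easy addition theorem to the composite
\[
\Phi:\mm_{11,11}\xrightarrow{\pi}\cc_{11,11}\dashrightarrow Z,
\]
where $\cc_{11,11}\dashrightarrow Z$ is the Iitaka fibration of $\cc_{11,11}$ (with $\dim Z=\kappa(\cc_{11,11})=19$), composed with $\pi$ and then resolved to a morphism on a suitable model. A general fibre $\widetilde W\subset\mm_{11,11}$ of $\Phi$ is a finite $\mathfrak S_{11}$-cover of the general Iitaka fibre $W\subset\cc_{11,11}$. Granted that $\kappa(\widetilde W)=0$, Iitaka's easy addition yields
\[
\kappa(\mm_{11,11})\leq\dim Z+\kappa(\widetilde W)=19+0=19.
\]

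The main obstacle is the verification that $\kappa(\widetilde W)=0$. By definition of the Iitaka fibration, the general fibre $W$ satisfies $K_W\equiv 0$, so the Hurwitz-type identity forces $K_{\widetilde W}\equiv R|_{\widetilde W}$, where $R=\sum_{i<j}\delta_{0:\{i,j\}}$ is the ramification divisor of $\pi$ (each transposition $(i\,j)\in\mathfrak S_{11}$ stabilises the generic point of $\delta_{0:\{i,j\}}$, where the rational tail carries the two colliding markings). The claim then reduces to showing that this restricted divisor is rigid on $\widetilde W$, i.e. that $\kappa(\widetilde W,R|_{\widetilde W})=0$. To achieve this one must exploit the uniruledness of each $\delta_{0:\{i,j\}}$ (the rational tail furnishes a ruling) together with the explicit geometric description of the Iitaka model $Z$ emerging from the proof of Theorem~\ref{picard}, in order to argue that the intersections $\widetilde W\cap\delta_{0:\{i,j\}}$ carry no moving sections. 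Establishing this rigidity quantitatively is the central technical point, because it is precisely what prevents the Kodaira dimension from jumping under the symmetric quotient $\pi$.
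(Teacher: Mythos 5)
There are two serious problems here. First, the argument is circular: you deduce $\kappa(\cc_{11,11})=19$ from Theorem~\ref{picard}, but in the paper Theorem~\ref{picard} for $g=11$ is itself a \emph{consequence} of Theorem~\ref{gen11} --- the only access the paper has to $\overline{\mathfrak{Pic}}_{11}^{11}$ is through the birational model $\cc_{11,11}$, and the equality $\kappa(\overline{\mathfrak{Pic}}_{11}^{11})=19$ is obtained precisely by the divisor-class computation on $\cc_{11,11}$ and $\mm_{11,11}$ carried out in Section 3. There is no independent proof of Theorem~\ref{picard} in genus $11$ that you could invoke as input. Second, even granting the first step, your upper bound for $\kappa(\mm_{11,11})$ hinges on the claim $\kappa(\widetilde W)=0$ for the $\mathfrak S_{11}$-cover of a general Iitaka fibre, and you explicitly leave this unproved (``the central technical point''). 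Note also that a general Iitaka fibre satisfies $\kappa(W)=0$, not $K_W\equiv 0$, so the reduction to the rigidity of $R|_{\widetilde W}$ is not justified as stated; controlling how Kodaira dimension behaves under the ramified quotient $\pi$ is exactly where the difficulty lies, and an outline of what ``one must exploit'' does not close it.

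For comparison, the paper's proof is direct and does not pass through $\overline{\mathfrak{Pic}}_{11}^{11}$ or the Iitaka fibration of $\cc_{11,11}$ at all. It writes $K_{\mm_{11,11}}\equiv \dd_{11}+2\phi^*(\mathfrak{bn}_{11})+\sum d_{i:c}\delta_{i:c}$, removes $\dd_{11}$ from every pluricanonical system using the covering rational curves $R$ on $K3$ surfaces with $R\cdot \dd_{11}=R\cdot K_{\mm_{11,11}}=-1$ (Propositions~\ref{extrem} and \ref{eliminate}), strips the $\delta_{0:c}$ boundary terms with further covering curves $R_T$, and then sandwiches the remaining class between pullbacks $\phi^*(B)$ and $\phi^*(B')$ of classes on $\mm_{11}$ supported on Brill--Noether divisors plus boundary. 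The equality $\kappa(\mm_{11},B)=\kappa(\mm_{11},B')=19$ is then read off from the Mukai fibration $q_{11}:\mm_{11,11}\dashrightarrow\ff_{11}$ onto the $19$-dimensional moduli space of polarized $K3$ surfaces, using Lefschetz pencils on a fixed $K3$ as contracted curves. If you want to salvage your approach, you would need to prove $\kappa(\cc_{11,11})=19$ by such a direct section computation first --- at which point the same computation already handles $\mm_{11,11}$ and the easy-addition detour becomes unnecessary.
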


Note that $\mbox{dim}(\mm_{11, 11})=41$, and $\mm_{11, 11}$ is the first example of a moduli space $\mm_{g, n}$ with $g\geq 2$, having intermediate Kodaira dimension. It is known \cite{Log} that $\mm_{11, n}$ is uniruled for $n\leq 10$ and of general type for $n\geq 12$.  To interpret such results, for a genus $g\geq 2$ we define the invariant
$$\zeta(g):=\mathrm{min}\{n\in \mathbb Z_{\geq 0}: \kappa(\mm_{g, n})\geq 0\}.$$
We think of $\zeta(g)$ as measuring the \emph{complexity} of the general curve of genus $g$. It is known that the relative dualizing sheaf of the forgetful map $\mm_{g, n}\rightarrow \mm_{g, n-1}$ is big, see for instance \cite{CHM} p. 19, thus it follows that $\mm_{g, n}$ is of general type for $n>\zeta(g)$. Then results  in \cite{HM}, \cite{EH2}, \cite{F2} imply that $\zeta(g)=0$, for $g\geq 22$. From \cite{FP} Proposition 7.5 one obtains the value $\zeta(10)=10$, whereas  Theorem \ref{gen11} implies that $\zeta(11)=11$.  This indicates, in precise terms, that  counter-intuitively, algebraic curves of genus $10$ are more complicated than curves of genus $11$!
\vskip 3pt

We make a few comments on Theorem \ref{gen11}. We note that  $K_{\mm_{11, 11}}$ is an effective combination of the pull-back to $\mm_{11, 11}$ of the $6$-gonal  divisor $\mm_{11, 6}^1$  on $\mm_{11}$, the divisor $\dd_{11}$, and certain boundary classes $\delta_{i:S}$.  Then we construct (cf. Proposition \ref{extrem}), rational curves $R\subset \mm_{11, 11}$ passing through a general point of $\dd_{11}$, such that (i) $-R\cdot \dd_{11}>0$ equals precisely the multiplicity of $\dd_{11}$ in the above mentioned expression of $K_{\mm_{11, 11}}$, and (ii) $R$ is disjoint from all boundary divisors $\Delta_{i: T}$.  Therefore, $n\dd_{11}$ is a fixed component of the pluri-canonical linear series $|nK_{\mm_{11, 11}}|$ for all $n\geq 1$. The equality  $\kappa(\mm_{11, 11})=19$ is related to the Mukai  fibration
$$q_{11}: \mm_{11, 11}\dashrightarrow \ff_{11},$$ over the $19$-dimensional moduli space $\ff_{11}$ of polarized $K3$ surfaces of degree $20$. The map $q_{11}$ associates to a general element $[C, x_1, \ldots, x_{11}]\in \mm_{11, 11}$ the unique $K3$ surface $S$ containing $C$, see \cite{M2}. According to Mukai,  $S$ is precisely the Fourier-Mukai dual $K3$ surface to the non-abelian Brill-Noether locus corresponding to semistable vector bundles of rank $2$
$$S^{\vee}=\mathcal{SU}_C(2, K_C, 7):=\{E\in \mathcal{SU}_C(2, K_C): h^0(C, E)\geq 7\}.$$

An analysis of the fibration $q_{11}$ shows that the difference $K_{\mm_{11, 11}}-\dd_{11}$ is essentially the pull-back of an ample class on $\ff_{11}$. Eventually, this leads to the equality $\kappa(\mm_{11, 11})=\kappa(\mm_{11}, \mathfrak{bn}_{11})=19$, where the last symbol stands for the Iitaka dimension of the linear system generated by the Brill-Noether divisors $\mm_{11, 6}^1$ and $\mm_{11, 9}^2$ on $\mm_{11}$.
\vskip 4pt

In the final section of this paper we study the uniruledness of $\mm_{g, n}$ when $g\leq 9$.
\begin{theorem}\label{genul8}
The space $\mm_{g, n}$ is uniruled for $n\leq f(g)$, where $f(g)$ is given in the table below:
\begin{center}
\begin{tabular}{c|cccccc}
$g$ & 5 & 7 & 8 & 9 & 10&
\\
\hline
$f(g)$ & 13 & 13 & 12 & 10& 9&
\\
\hline $h(g)$ & 15 & 15 & 14 & 13& 11&

\end{tabular}
\end{center}
\end{theorem}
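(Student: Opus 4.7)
The approach is genus-specific, built around a K3-pencil construction augmented by auxiliary rationally varying marked points. For $g\in\{5,7,8,9\}$, Mukai's theorem places a general curve $C$ of genus $g$ on a polarized K3 surface $(S,L)$ of genus $g$ with $\OO_S(C)=L$ and $\dim|L|=g$. Marking $k\leq g-1$ points on $C$, the linear subsystem $|L(-x_1-\cdots-x_k)|$ on $S$ has dimension at least $g-k\geq 1$; a pencil therein sweeps out a rational curve in $\mm_{g,k}$ through $[C,x_1,\ldots,x_k]$, giving uniruledness for $n\leq g-1$ immediately.

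To extend the bound up to $n=f(g)$, I would work with K3 surfaces of enriched Picard lattice containing $L$ together with auxiliary classes $E_1,\ldots,E_m$ (for instance, $(-2)$-curves or elliptic sections with $E_j\cdot L=1$). For each curve $C_t$ in the pencil, the intersection $C_t\cap E_j$ is a canonically labeled point varying rationally with $t$, contributing $m$ additional marked points. This yields uniruledness of $\mm_{g,k+m}$ provided the corresponding Noether-Lefschetz stratum of the moduli space of polarized K3 surfaces of genus $g$ still dominates $\mm_g$ under the map $(S,L,C)\mapsto [C]$. The announced $f(g)$ would emerge as the largest $k+m$ for which this dominance survives.

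The case $g=10$ is genuinely different, since a general curve of genus $10$ does not lie on any K3 of genus $10$. Here I would invoke Mukai's alternative description of the general genus-$10$ curve as a linear section of the Fano threefold $V_{18}\subset\mathbb{P}^{11}$, and apply the pencil construction to curves lying in hyperplane sections of $V_{18}$ (themselves K3 surfaces of genus $10$). A careful count of rationally varying marked points should give uniruledness of $\mm_{10,n}$ for $n\leq 9=f(10)$.

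\textbf{Main obstacle.} The critical step is verifying that the Picard-enriched stratum remains dominant over $\mm_g$ as $m$ grows: each auxiliary class $E_j$ generically imposes a codimension-one condition on K3 moduli, so one must compute the maximal $k+m$ compatible with dominance, genus by genus. Secondary technicalities include Bertini-type transversality arguments ensuring that the intersection points $C_t\cap E_j$ remain distinct from the pencil-fixed points and move non-trivially in moduli as $t$ varies; these are routine but must be confirmed case by case.
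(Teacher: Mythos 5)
Your proposal has a genuine quantitative gap: the K3--pencil construction with auxiliary sections cannot dominate $\mm_{g,n}$ for the values of $n$ in the table. A pencil inside $|L(-x_1-\cdots-x_k)|$ forces $k\leq g-1$, and each auxiliary class $E_j$ cuts the moduli of lattice-polarized K3 surfaces down to dimension $19-m$. The total parameter space (lattice-polarized $S$, the $k$ base points, the pencil parameter) has dimension at most $(19-m)+2k+1$, which for $g=5$, $n=k+m=13$ is at most $10+8+1=19$, far below $\dim \mm_{5,13}=25$; the count fails similarly for $g=7$ ($25<31$) and $g=8$ ($29<33$). So your rational curves cannot pass through a general point of $\mm_{g,n}$ once $n$ exceeds roughly $g$, and the "main obstacle" you flag is in fact fatal rather than a technicality to be checked. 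The $g=10$ step is also flawed as stated: curve sections of hyperplane-section K3 surfaces of the genus-$10$ Fano threefold sweep out only the K3 divisor $\kk_{10}\subset\mm_{10}$, not a general curve (this is exactly Mukai's non-dominance theorem in genus $10$), so the construction again misses the general point; the paper simply quotes \cite{FP} for this case.

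The idea you are missing is that one does not need rational curves through a \emph{general} point. The paper reduces uniruledness to the non-pseudo-effectivity of $K_{\mm_{g,n}}$ via \cite{BDPP}, and verifies the latter by a purely intersection-theoretic criterion (Propositions \ref{uniruled0} and \ref{uniruled}): it suffices to exhibit one or two \emph{extremal, non-movable divisors} $D_1,D_2$ together with covering curves $\Gamma_i\subset D_i$ whose intersection numbers with $K$, $D_1$, $D_2$ satisfy certain sign and determinant inequalities. The divisors used are pull-backs of Brill--Noether divisors ($\mm^2_{8,7}$, $\mm^1_{9,5}$, $\mm^1_{7,4}$), the boundary $\Delta_{\mathrm{irr}}$, and the pointed divisor $\cD_1\subset\mm_{7,n}$ of Proposition \ref{dinm7n}; the covering curves come from Lefschetz pencils of \emph{nodal plane models} (sextics, septics, octics with assigned singular points), not from K3 pencils, with base changes along multisections to produce the extra marked points. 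Only in genus $5$ does the plane-curve pencil actually dominate the whole space $\mm_{5,13}$, so that $\Gamma\cdot K_{\mm_{5,13}}=-2$ concludes directly. To repair your argument you would need both this divisorial reduction and replacement of the K3 pencils by pencils of singular plane curves, whose parameter spaces are large enough to cover the relevant divisors.
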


New here is the statement regarding the uniruledness of $\mm_{g, n}$. For the sake of comparison, we have copied from \cite{Log} and \cite{F1} Theorem 1.10, the bound $h(g)$, for which $\mm_{g, n}$ is known to be of general type when $n\geq h(g)$. We note that moreover, $\kappa(\mm_{7, 14})\geq 2$, thus $\zeta(7)=14$. Nothing appears to be known about the Kodaira dimension of $\mm_{5, 14}$ and $\mm_{8, 13}$, which are the missing cases from the classification, when $g\leq 8$. The case $g=6$, where a complete solution is known, cf.  \cite{Log}, \cite{CF}, is omitted from the table.

In order to prove Theorem \ref{genul8}, it suffices to establish that $K_{\mm_{g, n}}$ is not pseudo-effective and then use \cite{BDPP}  (see also \cite{L} Section 11.4.C for a transparent presentation), to conclude that $\mm_{g, n}$ is uniruled. The non-effectiveness of $K_{\mm_{g, n}}$ is established by exhibiting \emph{one or two extremal uniruled} divisors on $\mm_{g, n}$, satisfying certain numerical properties, see Proposition \ref{uniruled}.

\vskip 3pt
We close the introduction by pointing out that, shortly after this paper was published on arXiv, the paper \cite{BFV} by Bini, Fontanari and Viviani appeared, where  the Kodaira dimension of  Caporaso's \cite{C} compactification $P_{d, g}$ of the universal Picard variety $\mathfrak{Pic}_g^d$ is determined for all $g\geq 4$ and degrees $d$ such that $\mbox{gcd}(d+g-1, 2g-2)=1$. In the case $d=g$, the result from \cite{BFV} overlaps with our Theorem 0.3. Their methods are however quite different from ours. The difference lies in that \emph{two different} birational models are used to compactify the universal Jacobian. These models are related via the birational Abel-Jacobi map $\mathfrak{a}_g:\cc_{g, g}\dashrightarrow P_{g, g}$, which is a blow-up of a codimension $2$ locus in $P_{g, g}$ and has $\widetilde{\cD}_g$ as its exceptional divisor. This also explains the discrepancy between the canonical classes $K_{\cc_{g, g}}$ and $K_{P_{g, g}}$ (cf. \cite{BFV} Theorem 1.4) respectively. In view of \cite{BFV}, we would also like to point out the existence of our earlier preprint \cite{FV2}. In the second half of \cite{FV2} (which will not be published  having been incorporated in  Sections 3-5 of this paper), the birational classification of $\mathfrak{Pic}_d^g$ for $g\leq 11$ is carried out to a very large extent. In particular, the intermediate case $g=11$ and the connection to the Mukai fibration $q_{11}:\mm_{11, 11}\dashrightarrow \overline{\mathcal{F}}_{11}$  appears already in \cite{FV2} Theorem 0.3.

\section{An extremal effective divisor on $\mm_{g, g}$}

We begin by setting notation and terminology. If $\bf{M}$ is a Deligne-Mumford stack we denote by $\cM$ its associated coarse moduli space. Whenever we refer to the Picard group of one of the moduli spaces $\cM\in \{\mm_{g, n}, \cc_{g, n}\}$, to keep things simple we denote $\mbox{Pic}(\cM):=\mbox{Pic}(\cM)_{\mathbb Q}=\mbox{Pic}(\bf{M})_{\mathbb Q}$ for the rational Picard group. This allows us to make no distinction between divisor classes on the stack and the associated coarse moduli space.

Let $X$ be a complex $\mathbb Q$-factorial variety. A $\mathbb Q$-Weil divisor $D$ on $X$ is said to be \emph{movable} if $\mbox{codim}\bigl(\bigcap_{m} \mbox{Bs}|mD|, X\bigr)\geq 2$, where the intersection is taken over all $m$ which are sufficiently large and divisible. We say that $D$ is \emph{rigid} if $|mD|=\{mD\}$, for all $m\geq 1$ such that $mD$ is an integral Cartier divisor. The \emph{Kodaira-Iitaka dimension} of a divisor $D$ on $X$ is denoted by $\kappa(X, D)$. As usual, we set $\kappa(X):=\kappa(X, K_X)$. We say that a curve $\Gamma\subset X$ is a \emph{covering curve} for a divisor $D\subset X$, when $\Gamma$ deforms in a family of $1$-cycles $\{\Gamma_t\}_{t\in T}$, such that $\overline{\cup_{t\in T} \Gamma_t}=D$.


We recall the notation for boundary divisor classes on the moduli space $\mm_{g, n}$, cf. \cite{AC1}.  For an integer $0\leq i\leq [g/2]$ and a set of labels
$T\subset \{1, \ldots, n\}$, we denote by $\Delta_{i: T}$ the closure in $\mm_{g, n}$ of the locus of $n$-pointed curves $[C_1\cup C_2, x_1, \ldots, x_n]$, where $C_1$ and $C_2$ are smooth curves of genera $i$ and $g-i$ respectively, and the marked points lying on $C_1$ are precisely those labeled
by $T$. As usual, we define $\delta_{i: T}:=[\Delta_{i: T}]\in \mbox{Pic}(\mm_{g, n})$. For $0\leq i\leq [g/2]$ and $0\leq c\leq g$, we set
$$\Delta_{i: c}:=\sum_{\#(T)=c}\delta_{i: T}, \ \ \ \  \delta_{i: c}:=[\Delta_{i: c}]_{\mathbb Q}\in \mbox{Pic}(\mm_{g, n}).$$
 By convention, $\delta_{0: c}:=\emptyset$, for $c<2$.
If $\phi:\mm_{g, n}\rightarrow \mm_g$ is the morphism forgetting the marked points, we set $\lambda:=\phi^*(\lambda)\in \mbox{Pic}(\mm_{g, n})$ and $\delta_{\mathrm{irr}}:=\phi^*(\delta_{\mathrm{irr}})\in \mbox{Pic}(\mm_{g, n})$, where $\delta_{\mathrm{irr}}:=[\Delta_{\mathrm{irr}}]\in \mbox{Pic}(\mm_g)$ denotes the class of the locus of irreducible nodal curves. Furthermore, $\psi_1, \ldots, \psi_n\in \mbox{Pic}(\mm_{g, n})$ are the cotangent classes corresponding to the marked points. The canonical class of $\mm_{g, n}$ can be computed by using Grothendieck-Riemann-Roch for the universal curve over $\mm_{g, n}$:
\begin{equation}\label{canmgn}
K_{\mm_{g, n}}\equiv 13\lambda-2\delta_{\mathrm{irr}}+\sum_{i=1}^n \psi_i-2\sum_{T\subset \{1, \ldots, n\}\atop i\geq 0} \delta_{i: T}-\delta_{1: \emptyset}.
\end{equation}
On the universal symmetric product $\cc_{g, n}$, we denote by $\widetilde{\lambda}, \widetilde{\delta}_{\mathrm{irr}}, \widetilde{\delta}_{i: c}:=[\widetilde{\Delta}_{i: c}]\in \mbox{Pic}(\cc_{g, n})$ the divisor classes corresponding to the same symbols on $\mm_{g, n}$. The general point from the $\widetilde{\Delta}_{0: 2}$ corresponds to a marked curve with automorphism  group isomorphic to  $\mathbb Z/2\mathbb Z$, therefore $\widetilde{\delta}_{0: 2}=[\widetilde{\Delta}_{0: 2}]_{\mathbb Q}=[\widetilde{\Delta}_{0, 2}]/2$. Let $\pi:\mm_{g, n}\rightarrow \cc_{g, n}$ and $\varphi:\cc_{g, n}\rightarrow \mm_g$ be the quotient and forgetful maps respectively, thus $\phi=\varphi \circ \pi$. Clearly, $\pi^*(\widetilde{\lambda})=\lambda, \ \pi^*(\widetilde{\delta}_{\mathrm{irr}})=\delta_{\mathrm{irr}}$, $\pi^*(\widetilde{\delta}_{i: c})=\delta_{i: c}$, where the last formula, in the case $i=0, c=2$, takes into account the branching of the map $\pi$ along the divisor $\widetilde{\Delta}_{0: 2}\subset \cc_{g, n}$.

We introduce the tautological line bundle $\mathbb L$ on $\cc_{g, n}$, having fibre $$\mathbb L[C, x_1+ \cdots+ x_n]=T_{x_1}^{\vee}(C)\otimes \cdots \otimes T_{x_n}^{\vee}(C),$$ over a point $[C, x_1+\cdots+x_n]:=\pi([C, x_1, \ldots, x_n])\in \cc_{g, n}$. We set $\widetilde{\psi}:=c_1(\mathbb L)$, and let $\pi_i:\mm_{g, n}\rightarrow \mm_{g, 1}$ be the morphism forgetting all expect the $i$-th marked point. Then
\begin{equation}\label{tildepsi}
\pi^*(\widetilde{\psi})=\sum_{i=1}^n \pi_i^*(\psi)=\sum_{i=1}^n \Bigl(\psi_i-\sum_{i\in T\subset \{1, \ldots, n\}} \delta_{0: T}\Bigr)=\sum_{i=1}^n \psi_i-\sum_{c=2}^n c\ \delta_{0:c}\in \mathrm{Pic}(\mm_{g, n}).
\end{equation}
From the Riemann-Hurwitz formula $K_{\mm_{g, n}}=\pi^*(K_{\cc_{g, n}})+\delta_{0: 2}$ applied to the quotient map $\pi$, after observing that the pull-back map
$\pi^*: \mbox{Pic}(\cc_{g, n})\rightarrow \mbox{Pic}(\mm_{g, n})$ is injective, we obtain the formula:
\begin{equation}\label{canonicsym}
K_{\cc_{g, n}}\equiv 13\widetilde{\lambda}-2\widetilde{\delta}_{\mathrm{irr}}+\widetilde{\psi}-2\sum_{i\geq 1, c\geq 0\atop (i, c)\neq (1, 0)} \widetilde{\delta}_{i: c}-3\widetilde{\delta}_{1: 0}-\widetilde{\delta}_{0:2}+\sum_{c=3}^n (c-2)\ \widetilde{\delta}_{0: c}\in \mathrm{Pic}(\cc_{g, n}).
\end{equation}
\vskip 3pt

In order to obtain lower bounds on the Kodaira dimension of $\cc_{g, n}$, we need to control its singularities. We fix a
point $[C, x_1+\cdots+x_n]\in  \cc_{g, n}$, and denote by $\mathrm{Def}(C, x_1, \ldots, x_n)$ the versal deformation space of the $n$-pointed curve $(C, x_1, \ldots, x_n)$, viewed as an open neighborhood of the origin in the tangent space to the moduli stack $T_{[C, x_1, \ldots, x_n]}(\overline{\textbf{M}}_{g, n})=\mbox{Ext}^1(\Omega_C^1, \OO_C(-x_1-\cdots-x_n))$. We set
$$\mathrm{Aut}(C, \bar{x}):=\bigl\{\sigma\in \mathrm{Aut}(C): \sigma(\{x_1, \ldots, x_n\})=\{x_1, \ldots, x_n\}\bigr\}.$$  An analytic neighbourhood of $[C, x_1+\cdots+x_n]\in \cc_{g, n}$ is isomorphic to the space
$$\mathrm{Def}(C, x_1, \ldots, x_n)/\mathrm{Aut}(C, \bar{x})\subset H^0\bigl(C, \omega_C\otimes \Omega_C^1(x_1+\cdots+x_n)\bigr)^{\vee}/\mathrm{Aut}(C, \bar{x}),$$
where the last identification uses Serre duality. To describe the action of $\mathrm{Aut}(C, \bar{x})$ on the tangent space of the moduli stack, we recall the concept of age.

Let $(V, \rho)$ be a  finite dimensional complex representation of a finite group $G$. If the eigenvalues of $\rho(g)\in GL(V)$
are $\mbox{exp}(2\pi i r_i)$,  where $0\leq r_i<1$ for $i=1, \ldots, d$, then following \cite{R}, we define \emph{the age} of the element $g\in G$ as $$\mathrm{age}(g):=r_1+\cdots+r_d.$$ According to the \emph{Reid-Tai criterion}, the singularities of $V/G$ are canonical, if and only if for each element $g\in G$ which does not act as a quasi-reflection, the inequality $\mathrm{age}(g)\geq 1$ holds, cf. \cite{HM} pg. 27. Next we show that the singularities of $\cc_{g, n}$ are no worse that those of $\mm_g$. In particular, one can bound the Kodaira dimension of $\cc_{g, n}$ by bounding the number of global sections of $K_{\cc_{g, n}}$.

\begin{theorem}\label{sing}
Fix integers $g\geq 4$ and $n, l\geq 0$, and let $\epsilon:\widetilde{\mathcal{C}}_{g, n}\rightarrow \cc_{g, n}$ be any resolution of singularities. Then $l$-canonical forms of $\cc_{g, n, \mathrm{reg}}$ extend, that is, there are group isomorphisms
$$\epsilon^*: H^0\bigl(\cc_{g, n, \mathrm{reg}}, K_{\cc_{g, n}}^{\otimes l}\bigr)\stackrel{\cong}\rightarrow H^0\bigl(\widetilde{\mathcal{C}}_{g, n}, K_{\widetilde{\mathcal{C}}_{g, n}}^{\otimes l}\bigr).$$
\end{theorem}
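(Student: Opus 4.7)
The plan is to reduce the theorem to showing that $\cc_{g,n}$ has only canonical singularities, and then to verify this via the Reid--Tai criterion applied to the local uniformization $\mathrm{Def}(C,x_1,\ldots,x_n)/\mathrm{Aut}(C,\bar{x})$ recalled just before the statement. Once canonical singularities are proved, the isomorphism $\epsilon^{*}$ is standard: injectivity is automatic because $\epsilon$ is birational and $\cc_{g,n}$ is normal, while surjectivity is precisely the statement that every pluri-canonical form on $\cc_{g,n,\mathrm{reg}}$ extends across the exceptional locus of $\epsilon$, which is the defining property of canonical singularities.

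To apply Reid--Tai at a point $p = [C, x_1+\cdots+x_n]$, I would set $V = \mathrm{Ext}^1(\Omega_C^1, \OO_C(-x_1-\cdots-x_n))$ and analyze the action of $G = \mathrm{Aut}(C,\bar{x})$ via the $G$-equivariant short exact sequence
$$0 \longrightarrow \bigoplus_{i=1}^n T_{x_i}(C) \longrightarrow V \longrightarrow V' \longrightarrow 0,$$
where $V' = \mathrm{Ext}^1(\Omega_C^1,\OO_C)$ carries the action of the underlying curve automorphism $\bar{\sigma}$. Since the age is additive in short exact sequences of representations, one has $\mathrm{age}_V(\sigma) = \mathrm{age}_{\mathrm{ker}}(\sigma) + \mathrm{age}_{V'}(\bar{\sigma})$, and I would split the verification into three cases according to $\bar{\sigma}$.

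Case (i), $\bar{\sigma} = \mathrm{id}_C$: here $\sigma$ is a permutation of the multiset $\{x_i\}$ preserving equalities, and its action on the kernel decomposes into disjoint cycles acting on equal tangent lines, each cycle of length $\ell$ contributing age $(\ell-1)/2$; a direct enumeration shows that the only non-quasi-reflection of age $<1$ would be a single transposition, which is itself a quasi-reflection, so all remaining $\sigma$ have age $\geq 1$. Case (ii), $\bar{\sigma}$ is non-trivial and not an elliptic-tail involution: the Harris--Mumford analysis on $\mm_g$, valid for $g \geq 4$, gives $\mathrm{age}_{V'}(\bar{\sigma}) \geq 1$ already, and the non-negative kernel contribution preserves this bound. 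Case (iii), $\bar{\sigma}$ is an elliptic-tail involution: on $V'$ this is a quasi-reflection with its lone $-1$ eigenvalue on the smoothing direction of the attaching node, and I would then check that either the induced action on the kernel keeps $\sigma$ a quasi-reflection on $V$ (when every marked point and every tangent line is fixed by the involution), or else the additional $-1$ eigenvalues on $T_{x_i}$ for marked points fixed on the tail, together with the coordinate swaps on pairs of marked points exchanged by the involution, contribute enough age to push the total above $1$.

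The main obstacle will be the bookkeeping in Case (iii), where on an iterated boundary stratum an elliptic-tail involution may act simultaneously on several smoothing directions, on tangent lines at fixed marked points lying on the tail (by $-1$), and by swapping pairs of marked points exchanged by the involution. Tracking all these eigenvalue contributions cleanly requires a careful case analysis that parallels, but is slightly more delicate than, Logan's corresponding verification for $\mm_{g,n}$, the extra step being that the further quotient by $\mathfrak{S}_n$ introduces no new non-canonical singularities beyond those already ruled out by Case (i). Once this is carried out, Reid--Tai holds at every $p$, $\cc_{g,n}$ has canonical singularities, and the extension of pluri-canonical forms follows uniformly in $l$.
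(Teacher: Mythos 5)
Your overall architecture has a fatal flaw: you propose to reduce the theorem to the statement that $\cc_{g, n}$ has only canonical singularities, but that statement is false, for the same reason it is false for $\mm_g$ itself. The problematic elements are elliptic-tail automorphisms of order $4$ and $6$: at a point $[C_1\cup_p E, x_1+\cdots+x_n]$ with $E$ elliptic of $j$-invariant $1728$ or $0$, $\sigma|_{C_1}=\mathrm{id}$ and all marked points on $C_1-\{p\}$, the automorphism acts trivially on every coordinate coming from the marked points, so its age equals its age on $\mm_g$, which is $3/4$ (order $4$) or $1/2$ (order $6$) --- strictly less than $1$, and these elements are not quasi-reflections. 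This breaks your trichotomy at two places at once. Your Case (ii) asserts that the Harris--Mumford analysis gives $\mathrm{age}_{V'}(\bar\sigma)\geq 1$ for every non-trivial $\bar\sigma$ that is not an elliptic-tail involution; that is not what Harris--Mumford prove --- their conclusion is rather that any $\bar\sigma$ of age $<1$ \emph{is} an elliptic-tail automorphism, of order $2$, $4$ or $6$. Your Case (iii) then treats only the involution, so the order $4$ and $6$ elements are never handled, and no amount of bookkeeping with the kernel $\bigoplus_i T_{x_i}(C)$ can save them when the marked points avoid the tail, since the kernel contribution is then zero.

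The paper's proof is structured to confront exactly this residual case, and it does so not via Reid--Tai but by importing the explicit local computation of Harris--Mumford (pp.\ 41--43): when $\{x_1,\ldots,x_n\}\subset C_1-\{p\}$, one takes the analytic neighbourhood $U\subset\mm_g$ of $[C_1\cup_p E]$ on which Harris--Mumford verify by hand that pluricanonical forms extend to a resolution, and observes that the same extension then holds over $\varphi^{-1}(U)$. This extra ingredient is indispensable and is entirely missing from your proposal. Two smaller points: (a) since the marked points of a stable pointed curve are distinct, an element with $\bar\sigma=\mathrm{id}_C$ acting by a genuine permutation of the markings does not exist in the form you describe; the new automorphisms special to $\cc_{g,n}$ live on \emph{exceptional rational components} carrying colliding marked points, and the age estimate there must be run on $H^0(R,\omega_R^{\otimes 2}(p+P))$ together with the torsion contribution $1/\mathrm{ord}(\sigma)$ at the attaching node, as in the paper's case (i); (b) the paper computes ages on the cotangent space $H^0(C,\omega_C\otimes\Omega^1_C(x_1+\cdots+x_n))$ via the torsion exact sequence (\ref{torsion}) rather than on $\mathrm{Ext}^1$, which is harmless but means your proposed exact sequence is not the one actually used.
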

\begin{proof} We choose a point $[C, x_1+\cdots +x_n]\in \cc_{g, n}$ which violates the Reid-Tai criterion, that is, there exists an automorphism
$\sigma\in \mathrm{Aut}(C, \bar{x})$ which permutes the points $x_1, \ldots, x_n$, such that with respect to the action of $\sigma$ on $H^0\bigl(C, \omega_C\otimes \Omega_C^1(x_1+\cdots+x_n)\bigr)$, we have that \ $\mathrm{age}(\sigma)<1$. If $\{C_{\alpha}\}_{\alpha}$ are the normalizations of the components of $C$ and $\{p_{\alpha \beta}\}_{\beta}$ are the points on $C_{\alpha}$ whose images in $C$ are either nodes of $C$ or marked points $x_1, \ldots, x_n$, we recall that there exists an exact sequence:

\begin{equation}\label{torsion}
0\rightarrow \bigoplus_{p\in \mathrm{Sing}(C)} \mathrm{Tor}_p\rightarrow H^0\bigl(C, \omega_C\otimes \Omega_C^1(x_1+\ldots+x_n)\bigr)\rightarrow \bigoplus_{\alpha} H^0\bigl(C_{\alpha}, \omega_{C_{\alpha}}^{\otimes 2}(\sum_{\beta} p_{\alpha \beta})\bigr)\rightarrow 0,
\end{equation}
where $\mathrm{Tor}_p\subset H^0\bigl(C, \omega_C\otimes \Omega_C^1(x_1+\cdots+x_n)\bigr)$ is the $1$-dimensional space of torsion differentials based at $p\in \mathrm{Sing}(C)$. It is proved in \cite{HM} pg. 34, that $\mathrm{Tor}_p$ contributes at least $1/\mbox{ord}(\sigma)$ to $\mathrm{age}(\sigma)$, for each node $p\in \mathrm{Sing}(C)$. We distinguish two cases:

\noindent
(i) $\sigma$ acts non-trivially only on \emph{exceptional components} $R$ of $C$, which are smooth rational curves such that $\#(R\cap \overline{(C-R)})\leq 2$. In other words, $\sigma$ induces the trivial automorphism on the stable model of $C$. Let $R$ be an exceptional component, and for simplicity we assume that $R$ meets the rest of $C$ at only one point. We set $\{p\}:=R\cap C$ and denote by $P\subset R-\{p\}$ the marked points lying on $R$. Since $\sigma\in \mathrm{Aut}(R)$ has finite order, say $l$, one finds that $\sigma$ has precisely two fixed points $p=0, \infty\in R$, and that $\sigma(z)=\zeta\cdot z$, where $\zeta\neq 1$ is an $l$-th root of unity. The points in $P-\{\infty\}$ can be grouped in orbits of $l$ elements, and an immediate calculation shows that the contribution to $\mathrm{age}(\sigma)$ coming from $H^0(R, \omega_R^{\otimes 2}(p+P))$ is at least $(l-1)/l+{l-3\choose 2}$. Since as mentioned above, there is a further contribution to $\mbox{age}(\sigma)$ of at least $1/l$, coming from $\mathrm{Tor}_p(\omega_C\otimes \Omega_C^1 (x_1+\cdots+x_n))$, it follows that $\mathrm{age}(\sigma)\geq 1$, and this case corresponds to a canonical singularity. The case $\#(R\cap \overline{(C-R)})=2$ is analogous, also leading to a canonical singularity.
\vskip 3pt
\noindent
(ii) There exists a \emph{non-exceptional component} of $C$ on which $\sigma$ acts non-trivially. Since $\mathrm{age}(\sigma_C)$ (with respect to the action on $H^0(C, \omega_C\otimes \Omega_C^1)$) cannot exceed $\mbox{age}(\sigma)<1$, the analysis from \cite{HM} pg. 34-40, shows that in this case $C=C_1\cup E$,\ $C_1\cap E=\{p\}$, where $C_1$ is a smooth curve of genus $g-1$ and $E$ is an elliptic curve. Moreover $\sigma_{C_1}=\mbox{Id}_{C_1}$, and one distinguishes between the cases when $\mbox{ord}(\sigma_E)=2, 4, 6$. If at least one of the points $x_i$ lies on $E$, an immediate calculation shows that $\mathrm{age}(\sigma)\geq 1$, thus this case too corresponds to a canonical singularity. When $\{x_1, \ldots, x_n\}\subset C_1-\{p\}$, then if $U\subset \mm_g$ is the analytic neighbourhood of $[C_1\cup_p E]$ constructed in \cite{HM} pg. 41-43,  any pluri-canonical form defined on $\cc_{g, n, \mathrm{reg}}$ extends over $\varphi^{-1}(U)$. This completes the proof.
\end{proof}

We turn to the study of the divisor $\widetilde{\cD}_g\subset \cc_{g, g}$ defined as the closure of the locus of points $[C, x_1+\cdots+x_g]\in \mathcal{C}_{g, g}$ moving in a pencil. First we note that its class is given by
\begin{equation}\label{dgtilde}
\widetilde{\cD}_g\equiv -\widetilde{\lambda}+\widetilde{\psi}-\sum_{i\geq 1, c\geq 0} {|c-i|+1\choose 2}\widetilde{\delta}_{i: c}-\sum_{c=2}^g {c\choose 2}\widetilde{\delta}_{0: c} \in \mathrm{Pic}(\cc_{g, g}).
\end{equation}

We construct rational curves $\ell\subset \cc_{g, g}$ sweeping-out the divisor $\widetilde{\cD}_g$: We fix a curve $[C]\in \cM_g$, a complete base point free pencil $A\in W^1_g(C)$, and define $\ell\subset \cc_{g, g}$ as being the closure in moduli of the locus
$$\{[C, x_1+\cdots +x_g]\in \mathcal{C}_{g, g}: h^0(C, A(-x_1-\cdots -x_g))\geq 1\}\subset \mathcal{C}_{g, g}.$$
\begin{proposition}\label{ajpencil}
 One has that $\ell\cdot \widetilde{\psi}=2g-2$, \ $\ell\cdot \widetilde{\delta}_{0: 2}=2g-1$, whereas $\ell$ has intersection number $0$ with all remaining standard generators of $\mathrm{Pic}(\cc_{g, g})$. It follows that $\ell\cdot K_{\cc_{g, g}}=\ell\cdot \widetilde{\cD}_g=-1$. It follows that $\widetilde{\cD}_g$ is an extremal non-movable divisor in $\mathrm{Eff}(\cc_{g, g})$.
\end{proposition}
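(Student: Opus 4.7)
The plan is to compute the intersection of $\ell$ with each of the standard generators of $\mbox{Pic}(\cc_{g, g})$, substitute the results into (\ref{canonicsym}) and (\ref{dgtilde}), and then deduce extremality from the fact that $\ell$ is a covering curve of $\widetilde{\cD}_g$ with negative intersection against it. Since $\ell$ is by construction contained in the fibre $\varphi^{-1}([C])$ over a fixed smooth curve $[C]\in \cM_g$, any class pulled back from $\mm_g$ meets $\ell$ trivially, so that $\ell\cdot \widetilde{\lambda}=\ell\cdot \widetilde{\delta}_{\mathrm{irr}}=0$ and $\ell\cdot \widetilde{\delta}_{i: c}=0$ for every $i\geq 1$. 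For the boundary classes with $i=0$, Riemann-Hurwitz applied to the degree $g$ map $p:C\rightarrow \mathbb P^1$ defined by $|A|$ produces a ramification divisor of degree $4g-2$; for a general pencil $A\in W^1_g(C)$ every ramification is simple and the branch points are distinct, so $\ell$ meets $\widetilde{\Delta}_{0: 2}$ transversally at $4g-2$ points and is disjoint from the $\widetilde{\Delta}_{0: c}$ for $c\geq 3$. Combined with the convention $\widetilde{\delta}_{0: 2}=[\widetilde{\Delta}_{0: 2}]/2$ recorded in Section 1, this yields $\ell\cdot \widetilde{\delta}_{0: 2}=2g-1$ and $\ell\cdot \widetilde{\delta}_{0: c}=0$ for $c\geq 3$.

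The heart of the computation is $\ell\cdot \widetilde{\psi}$. Parametrising $\ell$ via the injection $|A|=\mathbb P^1\rightarrow \cc_{g, g}$, $D\mapsto [C, D]$, the pull-back of $\mathbb L$ has fibre $\bigotimes_{x\in \mathrm{supp}(D)} T_x^{\vee}(C)^{\otimes \mathrm{mult}_x(D)}$ over $D\in |A|$, which is precisely the fibre of the norm bundle $N_p(\omega_C)$ of $\omega_C$ along the degree $g$ morphism $p$. Since the norm of a line bundle on a finite flat cover of smooth curves preserves its degree, we obtain $\ell\cdot \widetilde{\psi}=\deg(\omega_C)=2g-2$. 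Plugging the calculated values into (\ref{canonicsym}) and (\ref{dgtilde}) now gives $\ell\cdot K_{\cc_{g, g}}=(2g-2)-(2g-1)=-1$ and $\ell\cdot \widetilde{\cD}_g=(2g-2)-\binom{2}{2}(2g-1)=-1$, as asserted. The subtlest step here is the identification $\mathbb L|_{\ell}\cong N_p(\omega_C)$, which requires checking that $|A|\rightarrow \cc_{g, g}$ lifts compatibly through the blow-up of the diagonals implicit in the fibre of $\varphi$ over $[C]$, so that $\mathbb L$ restricts to an honest line bundle on the rational curve $\ell$; a more pedestrian alternative, should this cause trouble, is to pass to the $g!$-cover $\pi^{-1}(\ell)\subset \mm_{g, g}$, use $\pi^*\widetilde{\psi}=\sum_i\psi_i-\sum_c c\,\delta_{0: c}$ from (\ref{tildepsi}), and evaluate each $\psi_i$-term by standard intersection computations on $\mm_{g, 1}$, which recovers the same value $2g-2$.

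For the extremality assertion, letting $[C]$ range over $\cM_g$ and $A$ vary in $W^1_g(C)$ (which has the expected dimension $g-2$ for a general $[C]$ by Brill-Noether theory) produces a family of rational curves $\ell$ of total dimension $(3g-3)+(g-2)+1=4g-4=\dim \widetilde{\cD}_g$, sweeping out $\widetilde{\cD}_g$. Given any effective divisor $E\equiv m\widetilde{\cD}_g$, the decomposition $E=a\widetilde{\cD}_g+E'$ with $\widetilde{\cD}_g\not\subset \mathrm{supp}(E')$, combined with the inequality $E'\cdot \ell\geq 0$ for a general member of the covering family and the identity $E'\cdot \ell=(m-a)(-1)$, forces $a\geq m$ and hence $E=m\widetilde{\cD}_g$. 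This is the standard rigidity argument, and it shows that $\widetilde{\cD}_g$ is an extremal non-movable divisor of $\mbox{Eff}(\cc_{g, g})$.
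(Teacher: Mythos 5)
Your proposal is correct, and it reaches the numbers by a genuinely different route for the key computation. The paper's proof takes the canonical class as the starting point: it invokes Kouvidakis's formula for $K_{C_g}$ to get $\widetilde{\ell}\cdot K_{C_g}=-1$ for the image $\widetilde{\ell}\subset C_g$ of $\ell$, observes that near $\ell$ the fibre $\varphi^{-1}([C])\rightarrow C_g$ is an isomorphism so that $\ell\cdot K_{\cc_{g,g}}=\widetilde{\ell}\cdot K_{C_g}=\ell\cdot\widetilde{\psi}-\ell\cdot\widetilde{\delta}_{0:2}$, counts the $4g-2$ simple ramification points of $A$ to get $\ell\cdot\widetilde{\delta}_{0:2}=2g-1$, and then \emph{solves} for $\ell\cdot\widetilde{\psi}=2g-2$. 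You reverse the logic: you compute $\ell\cdot\widetilde{\psi}$ directly by identifying $\mathbb L|_{\ell}$ with the norm bundle $N_p(\omega_C)$ of the degree $g$ cover $p:C\rightarrow\PP^1$, whose degree is $\deg(\omega_C)=2g-2$, and then \emph{deduce} $\ell\cdot K_{\cc_{g,g}}=-1$ from formula (\ref{canonicsym}). This is self-contained (no appeal to \cite{K}) at the cost of the one delicate point you yourself flag, namely that the identification of $\mathbb L|_{\ell}$ with the norm bundle extends over the branch points; your fallback via $\pi^*(\widetilde{\psi})$ and (\ref{tildepsi}) is a legitimate way to secure this. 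The two computations cross-check each other, since your value of $\ell\cdot\widetilde{\psi}$ together with the ramification count reproduces exactly the $\ell\cdot K_{C_g}=-1$ that the paper imports from Kouvidakis.

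Two small remarks. First, your justification that $\ell\cdot\widetilde{\delta}_{i:c}=0$ for $i\geq 1$ "because these classes are pulled back from $\mm_g$" is not literally accurate (only the sums $\sum_c\widetilde{\delta}_{i:c}$ are pull-backs); the correct and equally immediate reason is that $\ell$ lies in the fibre over a \emph{smooth} curve and therefore avoids every boundary divisor except the $\widetilde{\Delta}_{0:c}$. Second, your dimension count and rigidity argument establish that $\widetilde{\cD}_g$ is rigid, hence non-movable; extremality in $\overline{\mathrm{Eff}}(\cc_{g,g})$ follows from the same covering-curve inequality applied to any effective decomposition of $[\widetilde{\cD}_g]$, which is the "immediate" step the paper also leaves to the reader.
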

\begin{proof} Let $\widetilde{\ell}\subset C_g$, be the isomorphic image of $\ell$ under the blow-up map $\varphi^{-1}([C])\rightarrow C_g$ of the diagonals. Then, using e.g. \cite{K} Proposition 2.6, we have that $\widetilde{\ell}\cdot K_{C_g}=-1$. On the other side, $\widetilde{\ell}\cdot K_{C_g}=\ell \cdot K_{\cc_{g, g}}=\ell\cdot \widetilde{\psi}-\ell\cdot \widetilde{\delta}_{0: 2}$ (one may assume that  $A\in W^1_g(C)$ has only simple ramification points, hence $\ell\cdot \widetilde{\delta}_{0: c}=0$ for $c\geq 3$). Furthermore, $\ell\cdot \widetilde{\delta}_{0:2}$ equals the half of the number of ramification points of $A$, that is, $2g-1$, and the rest is immediate.
\end{proof}

As explained in the Introduction, for $g\geq 12$ the estimate $s(\mm_g)<7$ holds, and from (\ref{dgtilde}) one finds that
\begin{equation}\label{canontil}
K_{\cc_{g, g}}\in \mathbb Q_{>0}\Bigl\langle \widetilde{\lambda},\  [\widetilde{\cD}_g], \ \{\widetilde{\delta}_{i: c}\}_{(i, c)\neq (0, 2)}, \ \varphi^*\mathrm{Eff}(\mm_g)\Bigr\rangle.
\end{equation}
Coupled with Theorem \ref{sing}, this implies that $\kappa(\cc_{g, g})=3g-3$. Furthermore, we note that $\widetilde{\cD}_g$ appears with multiplicity $1$ in the stable base locus of $K_{\cc_g}$.
\begin{proposition}
Set $g\geq 11$. Then $|nK_{\cc_{g, g}}|=n\widetilde{\cD}_{g}+|nK_{\cc_{g, g}}-n\widetilde{\cD}_g|$, for all $n\geq 1$.
\end{proposition}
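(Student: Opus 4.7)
The plan is to use the covering family of rational curves $\ell \subset \widetilde{\cD}_g$ constructed in Proposition \ref{ajpencil}, together with the intersection numbers $\ell \cdot K_{\cc_{g,g}} = \ell \cdot \widetilde{\cD}_g = -1$ recorded there, to force every member of $|nK_{\cc_{g,g}}|$ to contain $n\widetilde{\cD}_g$ as a fixed part. The argument is essentially the standard criterion: if a covering family $\{\ell_t\}$ of a divisor $D$ satisfies $\ell_t \cdot D < 0$ and $\ell_t \cdot K < 0$ with $\ell_t \cdot K = \ell_t \cdot D$, then $D$ is a fixed component of $|nK|$ with multiplicity at least $n$.

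First I would verify that as $[C]$ varies in $\cM_g$ and $A$ varies in $W^1_g(C)$, the resulting curves $\ell$ sweep out $\widetilde{\cD}_g$. For $g \geq 11$ Brill--Noether theory gives $\rho(g,1,g) = g-2 \geq 9$, so $W^1_g(C)$ is non-empty for every $[C] \in \cM_g$ and of dimension $g-2$ on a dense open locus. A general point $[C, x_1+\cdots+x_g] \in \widetilde{\cD}_g$ determines $A := \OO_C(x_1+\cdots+x_g) \in W^1_g(C)$, and the corresponding $\ell$ passes through it. Since the parameter space of pairs $(C, A)$ has dimension $(3g-3)+(g-2) = 4g-5$ and each $\ell$ is an irreducible rational curve (being the image of $\mathbb{P}^1 = |A|$), the total space of this family has dimension $4g-4 = \dim \widetilde{\cD}_g$ and dominates $\widetilde{\cD}_g$.

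Now fix $n \geq 1$ and $E \in |nK_{\cc_{g,g}}|$. Write $E = m\widetilde{\cD}_g + E'$, where $m \in \mathbb{Z}_{\geq 0}$ is the multiplicity of $\widetilde{\cD}_g$ in $E$ and $\widetilde{\cD}_g \not\subset \mathrm{Supp}(E')$. Since no irreducible component of $E'$ coincides with the codimension-one subvariety $\widetilde{\cD}_g$, the intersection $E' \cap \widetilde{\cD}_g$ is a proper closed subset of $\widetilde{\cD}_g$. A general member $\ell$ of our covering family therefore meets the complement of $E'$; being irreducible, $\ell \not\subset \mathrm{Supp}(E')$, so $\ell \cdot E' \geq 0$. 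Combining with Proposition \ref{ajpencil} gives
\[
-n \;=\; n\,(\ell \cdot K_{\cc_{g,g}}) \;=\; \ell \cdot E \;=\; m\,(\ell \cdot \widetilde{\cD}_g) + \ell \cdot E' \;\geq\; -m,
\]
so $m \geq n$. Consequently $E - n\widetilde{\cD}_g = (m-n)\widetilde{\cD}_g + E'$ is effective and represents an element of $|nK_{\cc_{g,g}} - n\widetilde{\cD}_g|$; the reverse inclusion is trivial, yielding the claimed equality. The only delicate point is the covering-family property, but this follows formally from the existence of $W^1_g(C)$ for every $[C] \in \cM_g$ (Kempf's theorem, since $\rho \geq 0$) and the dimension count above; I do not foresee any substantial obstacle beyond bookkeeping.
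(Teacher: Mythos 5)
Your argument is correct and is essentially the paper's own proof: both rest on the covering family $\ell\subset\widetilde{\cD}_g$ of Proposition \ref{ajpencil} with $\ell\cdot K_{\cc_{g,g}}=\ell\cdot\widetilde{\cD}_g=-1$, which forces $\widetilde{\cD}_g$ to appear with multiplicity at least $n$ in every member of $|nK_{\cc_{g,g}}|$. You merely spell out the standard covering-curve computation and the sweeping-out dimension count that the paper leaves implicit.
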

\begin{proof} The coefficient of $\widetilde{\cD}_g$ in the expression (\ref{canontil}) is equal to $1$. Since $\ell\subset \widetilde{\cD}_g$ is a covering curve, such that $\ell\cdot K_{\cc_{g, g}}=\ell\cdot \widetilde{\cD}_g=-1$, whereas $\ell$ has intersection number zero with the remaining classes appearing in (\ref{canontil}), the conclusion follows.
\end{proof}

We are thus left with the study  of  $\dd_g:=\pi^*(\widetilde{\cD}_g)\subset \mm_{g, g}$,  in the range $g\leq 11$.

\begin{proposition}\label{extrem}
For $3\leq g\leq 11$, the irreducible divisor $\dd_g$ is filled up by  rational curves $R\subset \mm_{g, g}$ such that $R\cdot \dd_g<0$. It follows that $\dd_g\in \mathrm{Eff}(\mm_{g, g})$ is an extremal rigid divisor. Moreover, when $g\neq 10$, one can assume that $R\cdot \delta_{i: T}=0$ for all $i\geq 0$ and $T\subset \{1, \ldots, g\}$.
\end{proposition}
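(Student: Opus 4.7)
The plan is to exhibit, through a general point of $\dd_g$, a rational curve $R\subset\mm_{g,g}$ with $R\cdot\dd_g<0$; rigidity and extremality of $\dd_g$ then follow formally from having such a covering family with negative intersection number. The principal geometric input is Mukai's theorem: for $g\leq 9$ and $g=11$, a general curve $[C]\in\cM_g$ lies on a polarized $K3$ surface $(S,L)$ with $L^2=2g-2$ and $C\in|L|$.

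I would fix such a triple $(S,L,C)$ and a general pencil $\PP^1\subset|L|$ through $[C]$, whose base locus is a length-$(2g-2)$ subscheme $y_1+\cdots+y_{2g-2}\subset S$. By adjunction on the $K3$ surface, $L|_{C_t}=K_{C_t}$ for every member $C_t$ of the pencil, so this base locus is a canonical divisor on each $C_t$. Marking $g$ of these base points as $x_i:=y_i$ for $i=1,\dots,g$, the residual divisor $y_{g+1}+\cdots+y_{2g-2}$ is effective of degree $g-2$, so Serre duality yields $h^0(C_t,\OO_{C_t}(x_1+\cdots+x_g))\geq 2$; hence the morphism $R\colon\PP^1\to\mm_{g,g}$, $t\mapsto[C_t,x_1,\dots,x_g]$, lands in $\dd_g$. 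Varying $(S,L,C)$ in its moduli, the pencil in its Grassmannian, and the marking, the rational curves $R$ cover a dense open of $\dd_g$ provided $g\leq 11$; the Mukai hypothesis $g\neq 10$ is precisely what ensures that this family dominates $\dd_g$ rather than being confined to a Noether--Lefschetz subvariety.

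For the intersection numbers, the relative universal curve is the blow-up $\widetilde S\to\PP^1$ of $S$ at the $2g-2$ base points, with exceptional divisors $E_i$ providing the sections realizing the markings. Writing $\omega_{\widetilde S/\PP^1}\equiv\sum_j E_j+2F$ for a fibre class $F$, the self-intersection is $\kappa\cdot R=(\sum_j E_j+2F)^2=6g-6$, while an Euler-characteristic count gives the number of nodal fibres as $R\cdot\delta_{\mathrm{irr}}=6g+18$; Mumford's formula $12\lambda=\kappa+\delta_{\mathrm{irr}}$ then produces $R\cdot\lambda=g+1$. Each section $E_i$ is a $(-1)$-curve, so $R\cdot\psi_i=-E_i^2=1$. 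Choosing $(S,L)$ generically so that $\mathrm{Pic}(S)=\ZZ\cdot L$, the pencil is Lefschetz with only irreducible nodal fibres, and $R\cdot\delta_{i:T}=0$ for every reducible boundary pair $(i,T)$. Substituting into the formula for $\dd_g$ from the introduction,
\[
R\cdot\dd_g\;=\;-R\cdot\lambda+\sum_{i=1}^{g}R\cdot\psi_i\;=\;-(g+1)+g\;=\;-1.
\]

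The case $g=10$ requires a separate construction, since Mukai's theorem fails: a general curve of genus $10$ does not lie on any $K3$, so the above family covers only a Noether--Lefschetz divisor inside $\cM_{10}$, missing a dense open of $\dd_{10}$. I would replace it by rational curves arising from pencils on surfaces where reducible fibres are permitted (e.g.\ $K3$s of Picard rank at least two contained in the Noether--Lefschetz divisor of $\ff_{10}$, or other appropriate surfaces); the same numerical argument still yields $R\cdot\dd_{10}<0$, but now $R\cdot\delta_{i:T}\neq 0$ for some $(i,T)$, which is precisely why the statement excludes $g=10$ from the last sentence. The principal obstacle throughout is verifying the dominance of the covering family over $\dd_g$ in each genus; the numerical calculations themselves are elementary consequences of the $K3$ geometry, and extremality/rigidity of $\dd_g$ in $\mathrm{Eff}(\mm_{g,g})$ is then formal.
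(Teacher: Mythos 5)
For $g\neq 10$ your construction is, up to a change of viewpoint, exactly the paper's: the paper starts from a general $[C,x_1,\dots,x_g]\in\cD_g$, notes that $\Lambda=\langle x_1,\dots,x_g\rangle$ is a $(g-2)$-plane in the canonical space, and takes the pencil of hyperplane sections of the Mukai $K3$ surface $S\subset\PP^g$ through $\Lambda$; you start from a general pencil in $|L|$ and mark $g$ of its $2g-2$ base points, which produces the same family (the $g$ marked base points span the axis of the pencil). Your intersection numbers $R\cdot\lambda=g+1$, $R\cdot\delta_{\mathrm{irr}}=6g+18$, $R\cdot\psi_i=1$, $R\cdot\delta_{i:T}=0$ agree with (\ref{numericalparameters}), and the formal deduction of rigidity and extremality from a covering family with $R\cdot\dd_g=-1$ is the same as in the paper. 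One small point the paper is careful about and you should be too: the pencil is constrained to contain $C$ and to have axis through the $x_i$, so one must check (as the paper does, via a codimension count on the non-nodal locus in $(\PP^g)^{\vee}$) that all members are nodal with nodes away from the marked points; invoking a ``general Lefschetz pencil'' does not quite suffice once these incidence conditions are imposed.

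The genuine gap is the case $g=10$, which you leave essentially unproved, and your proposed remedy does not work. You suggest using pencils on $K3$ surfaces of higher Picard rank ``in the Noether--Lefschetz divisor of $\ff_{10}$.'' But \emph{every} smooth genus $10$ curve lying on \emph{any} $K3$ surface belongs to the divisor $\kk_{10}\subsetneq\mm_{10}$, irrespective of the Picard rank of the surface; since $\cD_{10}$ dominates $\cM_{10}$, no family of curves contained in $K3$ surfaces of genus $10$ can fill up $\dd_{10}$. The issue is not reducible fibres but the failure of surjectivity of the moduli map from curves on $K3$s. The paper's actual construction is different: starting from a general $[C,x_1,\dots,x_{10}]\in\cD_{10}$, it glues two marked points $x_i\sim x_j$ to obtain a $1$-nodal curve $X=C_{ij}$ of genus $11$; a \emph{general} point of $\Delta_0\subset\mm_{11}$ does lie on a smooth $K3$ surface $S\subset\PP^{11}$ (by \cite{FKPS}), even though the general genus $10$ curve does not. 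One then blows up the node $p$, takes the pencil $|\mathcal{I}_{Z/S'}(C)|$ on $S'=\mbox{Bl}_p(S)$ cut by hyperplanes through the span of the remaining marked points and the tangent plane $\mathbb{T}_p(S)$, and performs a degree $2$ base change along the exceptional $2$-section $E$ to split it into two sections carrying $x_i$ and $x_j$. This produces cycles $\Gamma_{ij}$ with $\Gamma_{ij}\cdot\delta_{0:\{i,j\}}=2$ (coming from the two points where the two sections meet), whose symmetrization $\Gamma$ satisfies $\Gamma\cdot\dd_{10}=-1$; this is precisely why the clause $R\cdot\delta_{i:T}=0$ must be dropped for $g=10$. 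Without some such construction your proof of the proposition is incomplete in the one case that actually requires a new idea.
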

\begin{proof}
We first treat the case $g\neq 10$, and start with a general point $[C, x_1, \ldots, x_g]\in \cD_g$. We assume that the points $x_1, \ldots, x_g\in C$ are distinct and $h^0(C, K_C(-x_1-\cdots -x_g))=1$. Let us consider the $(g-2)$-dimensional linear space $$\Lambda:=\langle x_1, \ldots, x_g\rangle\subset \PP\bigl(H^0(C, K_C)^{\vee}\bigr)=\PP^{g-1}.$$ Since $\phi(\cD_g)=\cM_g$, we may assume that $[C]\in \cM_g$ is a general curve. In particular, $C$ lies on a $K3$ surface $S\stackrel{|\OO_S(C)|}\hookrightarrow \PP^g$, which admits the canonical curve $C$ as a hyperplane section, cf. \cite{M1}. We intersect $S$ with the pencil of hyperplanes $\{H_{\lambda}\in (\PP^g)^{\vee}\}_{\lambda\in \PP^1}$ such that $\Lambda\subset H_{\lambda}$. Since (i) the locus of hyperplanes  $H\in (\PP^g)^{\vee}$ such that the intersection $S\cap H$ is not nodal has codimension $2$ in $(\PP^g)^{\vee}$, \ and (ii) the pencil $\{H_{\lambda}\}_{\lambda\in \PP^1}$ can be viewed as a general pencil of hyperplanes containing $\PP\bigl(H^0(C, K_C)^{\vee}\bigr)$ as a member, we may assume that all the curves $H_{\lambda}\cap S$ are nodal and that the nodes stay away from the fixed points $x_1, \ldots, x_g$. In this way we obtain a family in $\mm_{g, g}$
$$R:=\{[C_{\lambda}:=H_{\lambda}\cap S,\  x_1, \ldots, x_g]:\Lambda\subset H_{\lambda}, \ \lambda \in \PP^1\},$$
inducing a fibration $f:\tilde{S}:=\mbox{Bl}_{2g-2}(S)\rightarrow \PP^1$, obtained by blowing-up the base points of the pencil, together with $g$ sections given by the exceptional divisors
$E_{x_i}\subset \tilde{S}$ corresponding to the base points $x_1, \ldots, x_g$. The numerical parameters of $R$ are computed using, for instance, \cite{FP} Section 2. Precisely, one writes that
\begin{equation}\label{numericalparameters}
R\cdot \lambda=(\phi_*(R)\cdot \lambda)_{\mm_g}=g+1,\ \  R\cdot \delta_{\mathrm{irr}}=(\phi_*(R)\cdot \delta_{\mathrm{irr}})_{\mm_g}=6g+18, \ \ R\cdot \delta_{i: T}=0,
\end{equation}
for $i\geq 0$ and $T\subset \{1, \ldots, g\}$. Finally, from the adjunction formula, $R\cdot \psi_i=-(E_{x_i}^2)_{\tilde{S}}=1$ for $1\leq i\leq g$. Thus, $R\cdot \dd_g=-1$. Since $R$ is a covering
curve for the divisor $\dd_g$, it follows that $\dd_g$ is a rigid divisor on $\mm_{g, g}$.
\vskip 5pt

We turn to the case $g=10$, when the previous argument breaks down because the general curve $[C]\in \cM_{10}$ no longer lies on a $K3$ surface, see \cite{M1} Theorem 0.7.
More generally, we fix a genus $g<11, g\neq 9$ and pick a general point $[C, x_1, \ldots, x_{g}]\in \cD_{g}$. We denote by $X:=C_{ij}$ the nodal curve obtained from $C$ by identifying $x_i$ and $x_j$, where $1\leq i<j\leq g$. Since $[X]\in \Delta_0\subset \mm_{g+1}$ is a general $1$-nodal curve of genus $g+1$, using e.g. \cite{FKPS}, there exists a smooth $K3$ surface $S$ containing $X$.  We denote by $\nu:C\rightarrow X\subset S$ the normalization map and set $\nu(x_i)=\nu(x_j)=p$. The linear system $|\OO_S(X)|$ embeds $S$ in $\PP^{g+1}$ and $\nu^*(\OO_S(X))=K_C(x_i+x_j)$. Let $\epsilon:S':=\mbox{Bl}_p(S)\rightarrow S$ be the blow-up of $S$ at $p$ and $E\subset S'$ the exceptional divisor. Note that $C$ viewed as an embedded curve in $S'$ belongs to the linear system $|\epsilon^*\OO_S(1)\otimes \OO_{S'}(-2E)|$ and $C\cdot E=x_i+x_j$. Let $Z\subset S'$ the reduced $0$-dimensional scheme consisting of marked points of $C$ with support $\{x_i, x_j\}^c$.

Since $h^0(C, \OO_C(x_1+\cdots+x_{g}))=2$, we find  that $Z$ together with the tangent plane
$\mathbb{T}_p(X)=\mathbb{T}_p(S)$ span a $(g-1)$-dimensional linear space $\Lambda\subset \PP^{g+1}$. We obtain a $1$-dimensional family in $\dd_g$ by taking the normalization of the intersection curves on $S$ with hyperplanes $H\in (\PP^{g+1})^{\vee}$ passing through $\Lambda$. Equivalently, we note that $$h^0(S', \mathcal{I}_{Z/S'}(C))=h^0(S', \OO_{S'})+h^0(C,K_C(-x_1-\cdots -x_{g}))=2,$$ that is, $|\mathcal{I}_{Z/S'}(C)|$  is a pencil of curves on $S'$. We denote by $\tilde{\epsilon}:\tilde{S}:=\mbox{Bl}_{2g-4}(S')\rightarrow S'$ the blow-up of $S'$ at the $(\epsilon^*(H)-2E)^2=2g-4$ base points of $|\mathcal{I}_{Z/S'}(C)|$, by $f:\tilde{S}\rightarrow \PP^1$ the induced fibration with $(g-2)$ sections
corresponding to the points of $Z$, as well as with a $2$-section given by the divisor $E:=\tilde{\epsilon}^{-1}(E)$. Since $\mbox{deg}(f_{E})=2$, there are precisely two fibres of $f$, say $C_1$ and $C_2$, which are tangent to $E$.  We make a base change or order $2$ via the morphism $f_{E}:E\rightarrow \PP^1$, and consider the fibration
 $$q':Y':=\tilde{S}\times_{\PP^1} E\rightarrow E.$$
Thus $p:Y'\rightarrow \tilde{S}$ is the double cover branched along $C_1+C_2$. Clearly $q'$ admits two sections $E_1, E_2\subset Y'$ such that $p^*(E)=E_1+E_2$ and $E_1\cdot E_2=2$. By direct calculation, it follows that $E_1^2=E_2^2=-3$. To separate the sections
 $E_1$ and $E_2$, we blow-up the two points of intersection $E_1\cap E_2$ and we denote by $q:Y:=\mathrm{Bl}_2(Y')\rightarrow E$  the resulting fibration, which possesses everywhere distinct sections $\sigma_i:E\rightarrow Y'$ for $1\leq i\leq g$, given by the proper transforms of $E_1$ and $E_2$ as well as  the proper transforms of the exceptional divisors corresponding to the points in $Z$.
The numerical characters of the family $\Gamma_{ij}:=\{[q^{-1}(t), \sigma_1(t), \ldots, \sigma_g(t)]: t\in E\} \subset \mm_{g, g}$ are computed as follows:
$$\Gamma_{ij}\cdot \lambda=2(g+1),\ \mbox{ }  \Gamma_{ij}\cdot \delta_{\mathrm{irr}}=2(6g+17), \  \Gamma_{ij}\cdot \psi_l=2 \mbox{ for }  l\in \{i, j\}^c,$$
$$ \Gamma_{ij}\cdot \psi_i=\Gamma_{ij}\cdot \psi_j=-(E_i^2)_{Y'}+2=5, \ \Gamma_{ij}\cdot \delta_{0: \{i, j\}}=2,  \mbox{ } \Gamma_{ij}\cdot \delta_{l: T}=0 \mbox{ for } l\geq 0,  T\subset \{i, j\}^c.$$
We take the $\mathfrak S_g$-orbit of the $1$-cycle $\Gamma_{ij}$ with respect to permuting the marked points,
$$\Gamma:=\frac{1}{g(g-1)} \sum_{i<j} \Gamma_{ij}\in NE_1(\mm_{g, g}),$$
and note that $\Gamma\cdot \dd_{g}=-1$. Each component $\Gamma_{ij}$ fills-up $\dd_g$, which finishes the proof.
\end{proof}

We keep all the notation from the proof of Proposition \ref{extrem} and set $\widetilde{R}:=\pi_*(R)$ and $\widetilde{\Gamma}:=\pi_*(\Gamma) \in NE_1(\cc_{g, g})$. Note that $\widetilde{\Gamma}=\pi_*(\Gamma_{ij})/2$ for all $1\leq i<j\leq g$.
\begin{corollary}\label{numericalsym}
The following intersection identities on $\cc_{g, g}$ hold true:
$$\widetilde{R}\cdot \widetilde{\lambda}=g+1,\ \  \widetilde{R}\cdot \widetilde{\delta}_{\mathrm{irr}}=6g+18,\ \widetilde{R}\cdot \widetilde{\psi}=g \ \ \mbox{ and }\ \  \widetilde{R}\cdot \widetilde{\delta}_{i: c}=0 \ \mbox{ for all pairs }\  (i, c),$$
$$\widetilde{\Gamma}\cdot \widetilde{\lambda}=g+1,\ \widetilde{\Gamma}\cdot \widetilde{\delta}_{\mathrm{irr}}=6g+17,\ \widetilde{\Gamma}\cdot \widetilde{\psi}=g+1 \   \mbox{ and } \  \widetilde{\Gamma}\cdot \widetilde{\delta}_{0: 2}=1, \widetilde{\Gamma}\cdot \widetilde{\delta}_{i: c}=0 \mbox{ for } \ (i, c)\neq (0, 2).$$
It follows that $\widetilde{R}\cdot K_{\cc_{g, g}}=2g-23$ and $\widetilde{\Gamma}\cdot K_{\cc_{g, g}}=2g-21$.
\end{corollary}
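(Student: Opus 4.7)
The plan is to reduce everything to the projection formula $\pi_*(C)\cdot \widetilde{D}=C\cdot \pi^*(\widetilde{D})$, applied to the curves $R$ and $\Gamma_{ij}$ on $\mm_{g,g}$ whose intersection numbers with the standard generators of $\mathrm{Pic}(\mm_{g,g})$ were recorded in the proof of Proposition \ref{extrem}. Since the pull-back formulas $\pi^*(\widetilde{\lambda})=\lambda$, $\pi^*(\widetilde{\delta}_{\mathrm{irr}})=\delta_{\mathrm{irr}}$, and $\pi^*(\widetilde{\delta}_{i:c})=\delta_{i:c}$ are already available (the factor accounting for the branching of $\pi$ along $\widetilde{\Delta}_{0:2}$ being baked into the last identity), the only slightly delicate contribution is $\pi^*(\widetilde{\psi})=\sum_i\psi_i-\sum_{c=2}^g c\,\widetilde{\delta}_{0:c}$ from \eqref{tildepsi}.

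First I would handle $\widetilde{R}=\pi_*(R)$. Since $R\cdot\delta_{i:T}=0$ for all $i\geq 0$ and $T$, the projection formula gives $\widetilde{R}\cdot\widetilde{\delta}_{i:c}=0$ for every $(i,c)$, and the class computations $R\cdot\lambda=g+1$, $R\cdot\delta_{\mathrm{irr}}=6g+18$ translate verbatim. For the $\widetilde{\psi}$-intersection one plugs into \eqref{tildepsi} and uses $R\cdot\psi_i=1$ together with the vanishing of the boundary terms, obtaining $\widetilde{R}\cdot\widetilde{\psi}=g$.

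For $\widetilde{\Gamma}$ I would first justify the relation $\widetilde{\Gamma}=\pi_*(\Gamma_{ij})/2$: the $\binom{g}{2}$ curves $\{\Gamma_{ij}\}_{i<j}$ form a single $\mathfrak{S}_g$-orbit, hence all have the same push-forward $\pi_*(\Gamma_{ij})$, and averaging $\Gamma=\frac{1}{g(g-1)}\sum_{i<j}\Gamma_{ij}$ produces the stated factor $\frac{1}{2}$. Then the projection formula, combined with the numerical data $\Gamma_{ij}\cdot\lambda=2(g+1)$, $\Gamma_{ij}\cdot\delta_{\mathrm{irr}}=2(6g+17)$, $\Gamma_{ij}\cdot\delta_{0:\{i,j\}}=2$, $\Gamma_{ij}\cdot\psi_i=\Gamma_{ij}\cdot\psi_j=5$ and $\Gamma_{ij}\cdot\psi_l=2$ for $l\notin\{i,j\}$, immediately yields $\widetilde{\Gamma}\cdot\widetilde{\lambda}=g+1$, $\widetilde{\Gamma}\cdot\widetilde{\delta}_{\mathrm{irr}}=6g+17$, $\widetilde{\Gamma}\cdot\widetilde{\delta}_{0:2}=1$, and the vanishing of $\widetilde{\Gamma}\cdot\widetilde{\delta}_{i:c}$ for $(i,c)\neq(0,2)$. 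For the $\widetilde{\psi}$-intersection, the contribution from $\sum_l\psi_l$ is $(g-2)\cdot 2+2\cdot 5=2g+6$, while the correction from $\sum_c c\,\widetilde{\delta}_{0:c}$ in \eqref{tildepsi} contributes $2\cdot\Gamma_{ij}\cdot\delta_{0:2}=4$; halving gives $\widetilde{\Gamma}\cdot\widetilde{\psi}=g+1$.

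Finally, substituting the above into the expression \eqref{canonicsym} for $K_{\cc_{g,g}}$ is purely arithmetic: $\widetilde{R}\cdot K_{\cc_{g,g}}=13(g+1)-2(6g+18)+g=2g-23$ and $\widetilde{\Gamma}\cdot K_{\cc_{g,g}}=13(g+1)-2(6g+17)+(g+1)-1=2g-21$. There is no real obstacle here; the only point requiring a moment of care is the $\widetilde{\psi}$-computation for $\widetilde{\Gamma}$, where one must correctly account for the boundary correction in \eqref{tildepsi} coming from the fact that the curves $\Gamma_{ij}$ meet $\delta_{0:2}$ non-trivially.
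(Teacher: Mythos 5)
Your proposal is correct and is exactly the argument the paper leaves implicit: the corollary is stated without proof as a direct consequence of the intersection numbers from Proposition \ref{extrem}, the pull-back identities including \eqref{tildepsi}, and the projection formula applied to $\pi_*$. All of your numerical checks (including the factor $\tfrac{1}{2}$ in $\widetilde{\Gamma}=\pi_*(\Gamma_{ij})/2$ and the boundary correction $-2\,\Gamma_{ij}\cdot\delta_{0:2}$ in the $\widetilde{\psi}$-computation) agree with the stated values.
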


\section{The Mukai model of $\mm_{g, g}$}
Having showed that the divisor $\dd_g\in \mathrm{Eff}(\mm_{g, g})$ is extremal when $g\leq 11$, our next aim is to construct a "modular" birational contraction of $\mm_{g, g}$, such that $\dd_g$ appears among its exceptional divisors. We achieve this goal for $7\leq g\leq 9$, using  Mukai's fundamental work on classification of Fano varieties. We recall that for $6\leq g\leq 9$, there exists a $n_g$-dimensional Fano variety $V_g\subset \PP^{N_g}$ of index $n_g-2$ and $\rho(V_g)=1$, where $N_g:=g+n_g-2$, such that general $1$-dimensional complete intersections of $V_g$ are canonical curves $[C]\in \cM_g$ with general moduli.  One has the following table, see \cite{M1} or \cite{M3} p. 256:
\begin{center}
\begin{tabular}{c|c|c|cc}
$g$ & $n_g$ & $N_g$& $V_g$ &
\\
\hline
$6$ & $5$ & $9$ & {\Small{Quadric section of}} \ $G(2, 5)$
\\
\hline $7$ & $10$ & $15$ & {\Small{Spinor variety}} \ $OG(5, 10)$ \\
\hline $8$ & $8$ & $14$ & {\Small{Grassmannian}} \ $G(2, 6)$ \\
\hline $9$ &  $6$ & $13$ & {\Small{Symplectic Grassmannian}} \ $SG(3, 6)$\\
\end{tabular}
\end{center}
The automorphism group $\mbox{Aut}(V_g)$ acts in a natural way on the product $V_g^{g}$ and we choose the polarization $\L:=\OO_{V_g}(1)\boxtimes \cdots \boxtimes \OO_{V_g}(1)\in \mbox{Pic}(V_g^g)$. For $7\leq g\leq 9$, we call the GIT-quotient
$$
\mathfrak{M}_{g, g} := (V_g^g)^{\mathrm{ss}}(\L) \dblq \mbox{Aut}(V_g)
$$ the \emph{Mukai model} of $\mm_{g, g}$. For $g=6$, it is not clear that $\mbox{Aut}(V_6)$ is a reductive group and leave the question of the nature of  $\mathfrak{M}_{6, 6}$ aside for further study (We are grateful to the referee for pointing this issue out to us). When $7\leq g\leq 9$, there exists a birational rational map
$$f_g:\mm_{g, g}\dashrightarrow \mathfrak{M}_{g, g}, \  \ \ \ f_g\bigl([C, x_1, \ldots, x_g]\bigr):=(x_1, \ldots, x_g) \ \ \mathrm{mod}\ \mathrm{Aut}(V_g).$$
The inverse map is given by $f_g^{-1}(x_1, \ldots, x_g):=[\langle x_1, \ldots, x_g\rangle \cap V_g, x_1, \ldots, x_g]$. It is not hard to see that $f_g$ contracts all boundary divisors $\Delta_{i: T}$, where $i\geq 1$. The point is that a stable curve with a disconnecting node cannot appear as a linear section of $V_g$, that is, stable curves from the divisors $\Delta_i\subset \mm_g$ where $i>0$, correspond to non-nodal curvilinear sections of $V_g$. Accordingly, the locus of planes $\Lambda\in G(g, N_g+1)$ such that $\Lambda\cap V_g$ is not an irreducible curve with at worst nodal singularities, has codimension at least $2$ in $G(g, N_g+1)$ (see also \cite{FV} Proposition 4.2).  From \cite{M1}, \cite{M4}, it also follows that $f_g$ also blows-down the pull-back of the unique Brill-Noether divisor on $\mm_g$ when $g\neq 4, 6$ (respectively the \emph{Petri divisor} on $\mm_4$ and $\mm_6$). By comparing Picard numbers, the exceptional divisor $\mbox{Exc}(f_g)$ must contain one extra component:
\begin{proposition}\label{contraction}
For $7\leq g\leq 9$, the rational morphism $f_g$ contracts the divisor $\dd_g$.
\end{proposition}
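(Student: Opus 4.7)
The plan is to reduce to Proposition \ref{extrem} by showing that the covering family of rational curves $R \subset \mm_{g, g}$ constructed there is contracted by $f_g$. Since these $R$ fill up $\dd_g$, verifying that $f_g(R)$ is a point for a general such curve forces $f_g(\dd_g)$ to have dimension strictly less than $\dim \dd_g = 4g-4$, placing $\dd_g$ in the exceptional locus of $f_g$.

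Recall that $R$ is traced out by the pencil $\{C_\lambda := S \cap H_\lambda\}_{\lambda \in \PP^1}$, where $S \subset \PP^g$ is a K3 surface of genus $g$ containing $C$ as a hyperplane section, and $H_\lambda \subset \PP^g$ varies in the pencil of hyperplanes containing the $(g-2)$-plane $\Lambda = \langle x_1, \ldots, x_g \rangle \subset \PP^{g-1}$. The key input is Mukai's theorem \cite{M1}, \cite{M3}: for $7 \leq g \leq 9$, the general polarized K3 surface of genus $g$ is a linear section $S = \PP^g \cap V_g$ of the homogeneous variety $V_g \subset \PP^{N_g}$, with polarization $\OO_S(1) = \OO_{V_g}(1)|_S$. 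Fixing such an embedding $S \hookrightarrow V_g$, unique up to $\mathrm{Aut}(V_g)$, every smooth $C_\lambda$ becomes a linear section $C_\lambda = V_g \cap \Lambda_\lambda$, where $\Lambda_\lambda = H_\lambda$ is viewed inside $\PP^g \subset \PP^{N_g}$. Adjunction on $S$ gives $\omega_{C_\lambda} = \OO_{V_g}(1)|_{C_\lambda}$, so this embedding is the canonical embedding of $C_\lambda$ into $V_g$ used to define $f_g$.

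Under this fixed embedding, the marked points $x_1, \ldots, x_g$ lie on the base locus $\Lambda \cap S$ of the pencil, and therefore their images in $V_g$ do not depend on $\lambda$. Consequently
$$f_g\bigl([C_\lambda, x_1, \ldots, x_g]\bigr) = (x_1, \ldots, x_g) \ \mathrm{mod}\ \mathrm{Aut}(V_g)$$
represents the same point of $\mathfrak{M}_{g, g}$ for every $\lambda \in \PP^1$, so $f_g$ contracts $R$ to a point. The main delicacy is to ensure that for a general $[C, x_1, \ldots, x_g] \in \cD_g$ the K3 surface $S$ can actually be chosen to be a Mukai linear section in the sense above. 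This is automatic for $g \leq 9$, since the general genus-$g$ curve lies on a general polarized K3 surface of genus $g$, and by \cite{M1} such K3s are precisely the linear sections of $V_g$. As a consistency check on the contraction, one may also observe that the image $f_g(\dd_g)$ is forced into the locus $Z \subset V_g^g / \mathrm{Aut}(V_g)$ where $g$ points of $V_g$ fail to span a $\PP^{g-1}$, which has codimension $n_g \geq 6$ in $\mathfrak{M}_{g, g}$ — well beyond what a birational contraction of a divisor would require.
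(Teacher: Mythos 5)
Your proposal is correct and takes essentially the same route as the paper: the paper's proof is the one-line observation that $f_g$ blows down the covering curves $R\subset \dd_g$ from Proposition \ref{extrem}, and you simply spell out why (the marked points are base points of the pencil of hyperplane sections of the Mukai linear section $S=V_g\cap \PP^g$, so the tuple $(x_1,\ldots,x_g)\in V_g^g$ is constant along $R$). The added detail — adjunction identifying the restricted embedding with the canonical one, and the generality of $S$ as a linear section of $V_g$ — is exactly the justification the paper leaves implicit.
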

\begin{proof} It suffices to note that $f_g$ blows-down the covering curves $R\subset \dd_g\subset \mm_{g, g}$ constructed in the course of proving Proposition \ref{extrem}.
\end{proof}
We use the existence of the Mukai variety $V_g$, to establish Theorem \ref{symprod}.
\vskip 2pt
\noindent
\emph{Proof of Theorem \ref{symprod}.} In the range $6\leq g\leq 9, n\leq g$, the unirationality of $\cc_{g, n}$ follows from that of $\mm_{g, n}$. Indeed, the parameter space $$\Sigma:=\{\bigl((x_1, \ldots, x_n), \Lambda)\in V_g^n\times G(g, N_g+1): x_i\in \Lambda, \mbox{ for } i=1, \ldots, n\bigr\}$$ maps dominantly onto $\mm_{g, n}$ via the map $\bigl((x_1, \ldots, x_n), \Lambda\bigr)\mapsto [V_g\cap \Lambda, x_1, \ldots, x_n]$.
Since $\Sigma$ is a Grassmann bundle over the rational variety $V_g^n$, the conclusion follows. It is proved in \cite{FP} that $\mm_{11, n}$ (thus $\cc_{g, n}$ as well), is uniruled for $n\leq 10$. Similarly, $\mm_{10, n}$ is uniruled for $n\leq 9$, cf. {\it{loc. cit}}.
$\hfill$ $\Box$
\hfill
\vskip 3pt

 These results can be improved when $g\leq 6$ using plane models of minimal degree:
\begin{proposition}
$\cc_{g, n}$ is unirational for all $g\leq 6$ and $n\geq 0$.
\end{proposition}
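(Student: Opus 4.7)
The plan is to reduce the assertion, via the universal Abel--Jacobi fibration together with translation by the canonical class, to checking unirationality of finitely many universal Picard varieties $\overline{\mathfrak{Pic}}^n_g$, each of which I treat by a direct plane--model parameterization. The range $0\leq n\leq g$ is already contained in Theorem \ref{symprod}, so the work is concentrated on $n\geq g+1$.

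For $n\geq g$, I first invoke the universal Abel--Jacobi map $\mathfrak{a}_n:\cc_{g, n}\dashrightarrow \overline{\mathfrak{Pic}}^n_g$, which is dominant with general fibre $\PP^{n-g}$ (the complete linear system of a generic degree $n$ line bundle, by Riemann--Roch). Thus $\cc_{g, n}$ is birational to a (generically Brauer--Severi) $\PP^{n-g}$-bundle over $\overline{\mathfrak{Pic}}^n_g$, so unirationality of $\cc_{g, n}$ reduces to that of $\overline{\mathfrak{Pic}}^n_g$. Next, the relative dualizing sheaf of the universal curve furnishes a rational section of $\overline{\mathfrak{Pic}}^{2g-2}_g\rightarrow \cM_g$, so tensoring with $\omega_C$ yields a birational equivalence $\overline{\mathfrak{Pic}}^n_g \sim \overline{\mathfrak{Pic}}^{n+2g-2}_g$ over $\cM_g$. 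It therefore suffices to exhibit one unirational $\overline{\mathfrak{Pic}}^n_g$ in each residue class modulo $2g-2$.

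To produce these, for $g\in \{3, 4, 5, 6\}$ I would use the standard minimal plane models $(d, \delta)\in\{(4, 0), (5, 2), (6, 5), (6, 4)\}$ respectively, with $g=\binom{d-1}{2}-\delta$ and $\rho(g, 2, d)\geq 0$, so a general curve of genus $g$ arises as the normalization of such a nodal plane curve. For any $n$ with $g\leq n\leq N(g):=\frac{d(d+3)}{2}-3\delta=-(d-3)^2+3g+6$, the incidence variety of data $\bigl((p_1, \ldots, p_\delta), (x_1, \ldots, x_n), \Gamma\bigr)$, with $\Gamma\in \bigl|\OO_{\PP^2}(d)\otimes \mathcal{I}^2_{\{p_i\}}\otimes \mathcal{I}_{\{x_j\}}\bigr|$ and $p_i, x_j\in \PP^2$ in general position, is a projective bundle over the rational base $(\PP^2)^{\delta+n}$, and the map $\bigl(p_\bullet, x_\bullet, \Gamma\bigr)\mapsto [\widetilde{\Gamma}, x_1+\cdots+x_n]$ (with $\widetilde{\Gamma}$ the normalization) is a dominant rational map onto $\cc_{g, n}$. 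A short computation yields $N(3)=14, N(4)=14, N(5)=12, N(6)=15$; in each case the interval $[g, N(g)]$ has length at least $2g-2$, so its image in $\ZZ/(2g-2)\ZZ$ is surjective, and the argument concludes for $3\leq g\leq 6$. The remaining cases $g\leq 2$ reduce to classical parameterizations, the universal elliptic curve being rational and the hyperelliptic Weierstrass model handling $g=2$.

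The step most likely to require care is the dominance of the plane--model incidence variety onto $\cc_{g, n}$: this rests on the irreducibility of the Severi variety $V_{d, \delta}$ of nodal plane curves of degree $d$ with $\delta$ nodes and on the existence, for a general curve of genus $g\leq 6$, of the corresponding $g^2_d$, both of which are classical. The arithmetic verification that the intervals $[g, N(g)]$ cover all residues modulo $2g-2$ is routine.
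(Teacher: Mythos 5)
Your direct plane--model parameterization for $g\le n\le N(g)$ is sound and is in the spirit of the paper's argument, but the mechanism you use to reach large $n$ has a genuine gap. The step ``$\cc_{g,n}$ is a (generically Brauer--Severi) $\PP^{n-g}$-bundle over $\overline{\mathfrak{Pic}}^n_g$, so unirationality of $\cc_{g,n}$ reduces to that of $\overline{\mathfrak{Pic}}^n_g$'' is exactly the direction in which unirationality does \emph{not} automatically propagate. A Severi--Brauer variety over the function field of a unirational variety need not be unirational unless its Brauer class is trivial (a pointless conic over $\mathbb{R}$ is not unirational, and unirationality of general conic bundles over rational surfaces is a well-known open problem). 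Here the twist is precisely the obstruction to the existence of a Poincar\'e bundle on $\mathfrak{Pic}^n_g$ over $\cM_g$, which by Mestrano--Ramanan is nontrivial unless $\gcd(n-g+1,2g-2)=1$; so for infinitely many $n$ in each genus (e.g.\ $g=6$, $n\equiv 5 \bmod 10$) you cannot pass from unirationality of $\overline{\mathfrak{Pic}}^n_g$ back up to $\cc_{g,n}$ without further argument. The translation $\overline{\mathfrak{Pic}}^n_g\sim\overline{\mathfrak{Pic}}^{n+2g-2}_g$ by $\omega_C$ is fine, but it lives on the wrong space.

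The paper stays on $\cc_{g,n}$ throughout and handles arbitrary $n$ by \emph{residuation} in a large complete adjoint series rather than by translation on the Picard variety. For $g=6$ (the case worked out in detail): fix the four nodes $p_1,\dots,p_4$, choose $l\ge 3$ with $6l-20<n\le 6l-14$, and use that degree-$l$ curves through the $p_i$ cut out on the normalization $C$ of $\Gamma\in|\OO_{\PP^2}(6)(-2\sum_{i=1}^4 p_i)|$ the complete series $K_C(l-3)=\mathfrak g^{6l-14}_{6l-8}$. The incidence variety of triples $\bigl(\Gamma, X_l, a_1,\dots,a_{6l-8-n}\bigr)$ with $\Gamma\cdot X_l\ge 2\sum_i p_i+\sum_j a_j$ is rational, and sending such a triple to $[C,D]$, where $D$ is the residual degree-$n$ divisor of $\Gamma\cdot X_l$, dominates $\cc_{6,n}$ because $n\le \dim|K_C(l-3)|$. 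Since $l$ may be taken arbitrarily large, every $n$ is covered in one stroke, with no Picard variety and no Brauer class in sight. Replacing your Abel--Jacobi reduction by this residuation device (your incidence variety is essentially the paper's with the auxiliary curve $X_l$ omitted) repairs the argument; as written, the proof is incomplete for $n>N(g)$ outside the coprime residues.
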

\begin{proof} One used the representation of the general curve $[C]\in \cM_g$ as a plane model of degree $d:=[(2g+8)/3]$ with $\delta:={d-1\choose 2}-g$ nodes in general position, cf. \cite{AC2}. We describe the details for the case $g=d=6$. We choose general points $p_1, \ldots, p_4\in \PP^2$ and fix an integer $l\geq 3$ such that $6l-20<n\leq 6l-14$. On the normalization $C$ of a curve $\Gamma\in |\OO_{\PP^2}(6)(-2\sum_{i=1}^4 p_i)|$, the complete linear series $K_C(l-3)=\mathfrak g_{6l-8}^{6l-14}$ is cut out by degree $l$ curves passing through $p_1, \ldots, p_4$.
We define the incidence correspondence
$$\cU:=\Bigl\{\bigl(\Gamma, X_l, \{a_i\}_{i=1}^{6l-8-n}\bigr)\in |\OO_{\PP^2}(6)(-2\sum_{i=1}^4 p_i)| \times |\OO_{\PP^2}(l)(-\sum_{i=1}^4 p_i)|\times (\PP^2)^{6l-8-n}: $$
$$\Gamma\cdot X_l\geq 2(p_1+\cdots+p_4)+a_1+\cdots+a_{6l-8-n}\Bigr\}.$$
We note that $\cU$ is rational. The residuation map $r:\cU\dashrightarrow \cc_{6, n}$ defined by
$$r\bigl(\Gamma, X_l, \{a_i\}_{i=1}^{6l-8-n}\bigr):=[C, D], \ \  \mbox{ where } \ \Gamma\cdot X_l=2\sum_{i=1}^4 p_i+\sum_{i=1}^{6l-8-n} a_i+D, $$
and $C\rightarrow \Gamma$ is the normalization map, is dominant. Thus $\cc_{6, n}$ is unirational.
\end{proof}

\section{The Kodaira dimension of $\mm_{11, 11}$}
On $\mm_{11}$ there exist two divisors of Brill-Noether type consisting of curves with special linear series,  namely the closure of the locus of $6$-gonal curves   $$\cM_{11, 6}^1:=\{[C]\in \cM_{11}: G^1_6(C)\neq \emptyset\}$$
and the closure of the locus $\cM_{11, 9}^2:=\{[C]\in \cM_{11}: G^2_9(C)\neq \emptyset\}$.  The divisors $\mm_{11, 6}^1$ and $\mm_{11, 9}^2$ are irreducible, distinct, and their classes  are proportional, cf. \cite{EH2}. Precisely, there  are explicit constants $c_{11, 1, 6}, c_{11, 2, 9}\in \mathbb Z_{>0}$, such that $$\mathfrak{bn}_{11}:\equiv \frac{1}{c_{11, 1, 6}}\ \mm_{11, 6}^1\equiv \frac{1}{c_{11, 2,  9}}\ \mm_{11, 9}^2\equiv 7\lambda-\delta_0-5\delta_1-9\delta_2-12\delta_3-14\delta_4-15\delta_5\in \mbox{Pic}(\mm_{11}).$$
By interpolating, we find the following explicit canonical divisor:
\begin{equation}\label{canrep}
K_{\mm_{11, 11}}\equiv \dd_{11}+ 2\cdot  \phi^*(\mathfrak{bn}_{11})+\sum_{i=0}^5\sum_{c=0}^{11} d_{i: c}\ \delta_{i: c},
\end{equation}
where $$d_{0: c}=\frac{c^2+c-4}{2}\ \ \mbox{  } \mbox{ for }c\geq 2,\ \  \ d_{1: c}=8+{|c-1|+1\choose 2} \ \mbox{ for } c\geq 1,$$
 $$ d_{1: 0}=7, \ \ d_{2: c}=16+{|c-2|+1\choose 2},\  \ d_{3: c}=22+{|c-3|+1\choose 2},$$
 $$  d_{4: c}=26+{|c-4|+1\choose 2}, \ \mbox{ and } \ \   d_{5: c}=28+{|c-5|+1\choose 2}.$$
 Similarly, at the level of the universal symmetric product $\cc_{11, 11}$ one has the relation
 \begin{equation}\label{symcanrep}
 K_{\cc_{11, 11}}\equiv \widetilde{\mathcal{D}}_{11}+2\cdot \varphi^*(\mathfrak{bn}_{11})+\sum_{(i, c)\neq (0, 2)} d_{i: c}\ \widetilde{\delta}_{i: c}.
 \end{equation}

One already knows that multiples of $\dd_{11}$ are non-moving divisors on $\mm_{11, 11}$. We show that $\dd_{11}$ does not move in any multiple of the canonical linear system on $\mm_{11, 11}$.
\begin{proposition}\label{eliminate}
For each integer $n\geq 1$, one has an isomorphism
$$H^0\bigl(\mm_{11, 11}, \OO_{\mm_{11, 11}}(nK_{\mm_{11, 11}})\bigr)\cong H^0\bigl(\mm_{11, 11}, \OO_{\mm_{11, 11}}(nK_{\mm_{11, 11}}-n\dd_{11})\bigr).$$ In particular, $\kappa\bigl(\mm_{11, 11}\bigr)=\kappa\bigl(\mm_{11, 11}, K_{\mm_{11, 11}}-\dd_{11}\bigr)$. Furthermore, on $\cc_{11, 11}$, one has that $\kappa \bigl(\cc_{11, 11}\bigr)=\kappa\bigl(\cc_{11, 11}, K_{\cc_{11, 11}}-\widetilde{\mathcal{D}}_{11}\bigr)$.
\end{proposition}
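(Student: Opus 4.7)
The plan is to run a covering-curve argument: exhibit a family of rational curves sweeping out $\dd_{11}$ whose general member $R$ satisfies $R\cdot K_{\mm_{11,11}} = R\cdot \dd_{11} = -1$. Once such a family is in hand, any effective $D\in |nK_{\mm_{11,11}}|$ must contain $\dd_{11}$ with multiplicity at least $n$, which immediately upgrades to the desired isomorphism upon multiplying by the canonical section of $n\dd_{11}$.

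The required covering family is already produced by Proposition \ref{extrem}. For $g=11$ (the branch $g\neq 10$), the rational curves $R\subset \dd_{11}$ coming from $K3$-pencils satisfy $R\cdot \lambda = 12$, $R\cdot \delta_{\mathrm{irr}} = 84$, $R\cdot \psi_i = 1$ for each $i$, and $R\cdot \delta_{i:T} = 0$ for every $i\geq 0$ and $T\subseteq \{1,\dots,11\}$. The push-forward $\phi_*(R)\in N_1(\mm_{11})$ is numerically the class of a $K3$-pencil, pairing as $(12,84,0,\dots,0)$ with $(\lambda,\delta_0,\delta_1,\dots)$, so $R\cdot \phi^*(\mathfrak{bn}_{11}) = 7\cdot 12 - 84 = 0$. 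Substituting into (\ref{canrep}) yields $R\cdot K_{\mm_{11,11}} = R\cdot \dd_{11} = -1$; one also checks this directly from (\ref{canmgn}) as $13\cdot 12 - 2\cdot 84 + 11 = -1$.

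With this intersection in hand, fix $n\geq 1$ and let $D\in |nK_{\mm_{11,11}}|$ be effective. Decompose $D = a\,\dd_{11} + D'$ with $a\in \mathbb{Z}_{\geq 0}$ and $D'$ effective not containing $\dd_{11}$ in its support. Since $\{R_t\}$ sweeps out $\dd_{11}$ and $D'\not\supseteq \dd_{11}$, a generic member $R=R_t$ is not contained in $\mathrm{Supp}(D')$, so the intersection is proper and $R\cdot D'\geq 0$. Intersecting $D\equiv nK_{\mm_{11,11}}$ with $R$ gives $-a + R\cdot D' = -n$, forcing $a\geq n$. Hence every global section of $\OO_{\mm_{11,11}}(nK_{\mm_{11,11}})$ vanishes along $n\dd_{11}$, and multiplication by the defining section of $n\dd_{11}$ gives the first claimed isomorphism, from which $\kappa(\mm_{11,11}) = \kappa(\mm_{11,11},K_{\mm_{11,11}} - \dd_{11})$ follows.

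For the symmetric product I would repeat the argument verbatim with the push-forward $\widetilde R = \pi_*(R)$, which is by construction a covering curve for $\widetilde{\cD}_{11}$. Corollary \ref{numericalsym} gives $\widetilde R \cdot K_{\cc_{11,11}} = 2g - 23 = -1$ at $g=11$, and plugging $\widetilde R\cdot \widetilde\lambda = 12$, $\widetilde R\cdot \widetilde\psi = 11$, $\widetilde R\cdot \widetilde\delta_{i:c}=0$ into (\ref{dgtilde}) yields $\widetilde R\cdot \widetilde{\cD}_{11} = -1$. The same three-line covering argument then gives the second isomorphism and $\kappa(\cc_{11,11}) = \kappa(\cc_{11,11}, K_{\cc_{11,11}} - \widetilde{\cD}_{11})$. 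The only genuinely difficult input is Proposition \ref{extrem}, which in turn depends on the Mukai-type fact that the general canonical curve of genus $11$ lies on a polarized $K3$ surface; once that is accepted, the present proposition is essentially a one-line consequence of the extremality of $\dd_{11}$ and the numerics above.
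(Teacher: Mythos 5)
Your argument is correct and is essentially the paper's own proof: both rest on the covering curves $R$ (resp.\ $\widetilde R=\pi_*(R)$) from Proposition \ref{extrem} with $R\cdot K_{\mm_{11,11}}=R\cdot\dd_{11}=-1$ and zero intersection with all boundary classes, followed by the standard decomposition of an effective pluricanonical divisor. The only cosmetic difference is that you verify $R\cdot\phi^*(\mathfrak{bn}_{11})=0$ by the numerical identity $7\cdot 12-84=0$, whereas the paper deduces it geometrically from Lazarsfeld's Brill--Noether theorem for curves on a $K3$ surface with $\mathrm{Pic}(S)=\mathbb Z$; both are valid.
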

\begin{proof} Using the notation and results from Proposition \ref{extrem}, we recall that we have constructed a curve $R\subset \mm_{11, 11}$ moving in a family which  fills-up the divisor $\dd_{11}$, such that  $R\cdot \dd_{11}=-1$ and $R\cdot \delta_{i: S}=0$, for all $i\geq 0$ and $T\subset \{1, \ldots, g\}$.
All points in $R$ correspond to nodal curves lying  on a fixed $K3$ surface $S$, which by the generality assumptions, can be chosen such that
$\mbox{Pic}(S)=\mathbb Z$. Applying \cite{Laz}, all underlying genus $11$ curves corresponding to points in $R$ satisfy the Brill-Noether theorem, in particular $R\cdot \phi^*(\mathfrak{bn}_{11})=0$, that is, $R\cdot K_{\mm_{11, 11}}=R\cdot \dd_{11}=-1$. It follows that for any effective divisor $E$ on $\mm_{11, 11}$ such that $E\equiv nK_{\mm_{11, 11}}$, one has that $R\cdot E=-n$. Moreover, the class $E-n\dd_{11}$ is still effective and then $|nK_{\mm_{11, 11}}|=n\dd_{11}+|nK_{\mm_{11, 11}}-n\dd_{11}|$. The proof in the case of $\cc_{11, 11}$ is similar. One uses that $\pi^*(\widetilde{\mathcal{D}}_{11})=\dd_{11}$, hence $\widetilde{R}\cdot \widetilde{\mathcal{D}}_{11}=R\cdot \dd_{11}=-1$, as well as $\widetilde{R}\cdot K_{\cc_{11}}=-1$. The rest of the argument is identical.
\end{proof}
We are in a position to complete the proof of Theorem \ref{gen11}:
\begin{theorem} We have that $$\kappa\bigl(\mm_{11, 11}, \ 2\cdot \phi^*(\mathfrak{bn}_{11})+\sum_{i, c} d_{i: c}\cdot\delta_{i: c}\bigr)=
\kappa\bigl(\cc_{11, 11}, \ 2\cdot \varphi^*(\mathfrak{bn}_{11})+\sum_{(i, c)\neq (0, 2)} d_{i: c}\cdot\widetilde{\delta}_{i: c}\bigr)
=19.$$
It follows that the Kodaira dimension of both $\mm_{11, 11}$ and $\cc_{11, 11}$ equals $19$.
\end{theorem}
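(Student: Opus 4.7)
The plan is to sandwich the Iitaka dimension of $D := 2\phi^*(\mathfrak{bn}_{11}) + \sum_{i,c} d_{i:c}\delta_{i:c}$ between two bounds of $19$, and then deduce the Kodaira dimension statements from Proposition~\ref{eliminate} together with (\ref{canrep}) and (\ref{symcanrep}). The first observation, requiring only an arithmetic check of the explicit formulas for $d_{i:c}$, is that every coefficient is nonnegative, so the boundary contribution is genuinely effective and $D$ dominates $2\phi^*(\mathfrak{bn}_{11})$ in $\mathrm{Pic}(\mm_{11,11})$.

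For the lower bound $\kappa(\mm_{11,11}, D) \geq 19$, I would write
\[
\kappa(\mm_{11,11}, D) \;\geq\; \kappa\bigl(\mm_{11,11}, \phi^*(\mathfrak{bn}_{11})\bigr) \;\geq\; \kappa(\mm_{11}, \mathfrak{bn}_{11}) \;=\; 19.
\]
The middle step is the injection $\phi^*: H^0(\mm_{11}, n\mathfrak{bn}_{11}) \hookrightarrow H^0(\mm_{11,11}, n\phi^*\mathfrak{bn}_{11})$ coming from surjectivity of $\phi$. The final equality is Mukai's theorem: the general canonical curve of genus $11$ lies on a unique polarized K3 of degree $20$, which produces the Mukai map $\mu_{11}:\mm_{11}\dashrightarrow \ff_{11}$ with image of dimension $\dim \ff_{11}=19$, and $\mathfrak{bn}_{11}$ is, up to an effective boundary correction, the pull-back of an ample class on $\ff_{11}$.

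For the upper bound $\kappa(\mm_{11,11}, D) \leq 19$, I would exploit the composed Mukai fibration $q_{11} := \mu_{11}\circ \phi:\mm_{11,11}\dashrightarrow \ff_{11}$. The two ingredients I need are: (i) the Iitaka fibration of $\phi^*(\mathfrak{bn}_{11})$ factors through $q_{11}$, so its Iitaka dimension equals $19$; and (ii) every boundary divisor $\Delta_{i:T}$ appearing in $D$ is vertical for $q_{11}$, meaning that $q_{11}(\Delta_{i:T})$ lies in a proper closed subset of $\ff_{11}$. Granting (i) and (ii), a general fiber $F$ of $q_{11}$ satisfies $D|_F \equiv 0$, so the standard addition lemma forces $\kappa(\mm_{11,11}, D) = \kappa\bigl(\mm_{11,11}, \phi^*\mathfrak{bn}_{11}\bigr) = 19$. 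Heuristically, (ii) reflects the fact that a reducible stable curve $C_1 \cup C_2$ or one with a rational tail does not determine (or embed into) a generic K3 surface of degree $20$ in the sense of Mukai's construction, so these strata are contracted or swept to the boundary of $\ff_{11}$. The argument for $\cc_{11,11}$ proceeds in parallel, using (\ref{symcanrep}) and the finiteness of $\pi:\mm_{11,11}\to \cc_{11,11}$ to identify sections with $\mathfrak S_{11}$-invariants. Combined with Proposition~\ref{eliminate}, this delivers $\kappa(\mm_{11,11}) = \kappa(\cc_{11,11}) = 19$.

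The main obstacle is verifying (ii) rigorously: one must control how the Mukai fibration extends across the Deligne-Mumford boundary and show that no boundary divisor is dominant over $\ff_{11}$. Concretely, this can be accomplished by constructing, for each $\Delta_{i:T}$, a covering curve with vanishing intersection against $\phi^*(\mathfrak{bn}_{11})$, which refines the techniques already developed in Proposition~\ref{extrem} and Corollary~\ref{numericalsym}.
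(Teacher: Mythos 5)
Your overall architecture (sandwich the Iitaka dimension, then feed the result into Proposition \ref{eliminate} via (\ref{canrep}) and (\ref{symcanrep})) is the paper's, and your lower bound is essentially correct, with one repair: what \cite{FP} Proposition 6.2 actually supplies is that the class $B=\mathfrak{bn}_{11}+4\delta_3+7\delta_4+8\delta_5$ (not $\mathfrak{bn}_{11}$ alone) contains the pull-back of an ample class from $\ff_{11}$, so you should bound below by $\kappa(\phi^*(B))$ rather than by $\kappa(\phi^*(\mathfrak{bn}_{11}))$. Since $d_{3:c}\geq 22>8$, $d_{4:c}\geq 26>14$, $d_{5:c}\geq 28>16$, the class $D-2\phi^*(B)$ is still effective and the argument goes through.

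The genuine gap is in your upper bound, at claim (ii). The divisors $\Delta_{0:T}$ with $\#(T)\geq 2$ --- marked points colliding onto a rational tail --- are \emph{not} vertical for $q_{11}$: the stabilized genus $11$ curve underlying a general point of $\Delta_{0:T}$ is a general curve, hence lies on a general $K3$ surface, so $\Delta_{0:T}$ dominates $\ff_{11}$ and restricts to a nontrivial effective divisor on the general fiber $F$ of $q_{11}$. Thus $D|_F\not\equiv 0$, and your heuristic about degenerate curves failing to embed in a $K3$ does not apply to rational tails. (Easy addition would still work if you proved $\kappa(F, D|_F)=0$, which amounts to the rigidity of the diagonal loci in $F$; that is plausible but is a substantive claim you neither state nor prove.) The paper handles precisely these terms by a separate fixed-component argument: for each $T$ it constructs a covering curve $R_T\subset\Delta_{0:T}$ (a Lefschetz pencil of pointed curves on a $K3$ with a fixed rational tail attached along a section) satisfying $R_T\cdot\delta_{0:T}=-1$, $R_T\cdot\phi^*(\mathfrak{bn}_{11})=0$ and zero intersection with the remaining boundary, whence $\sum_c n d_{0:c}\,\Delta_{0:c}$ is a fixed part of $|nD|$ and may be subtracted. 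The residual class is then squeezed between $2\phi^*(B)$ and $\phi^*(B')$ with $B'=2\cdot\mathfrak{bn}_{11}+\sum_i a_i\delta_i$ pulled back from $\mm_{11}$, and the upper bound $\kappa(\mm_{11},B')\leq 19$ is obtained not from verticality of boundary divisors but from the fact that the Lefschetz pencils $R_{11}$ of genus $11$ curves on a fixed $K3$ satisfy $R_{11}\cdot B'=0$ and move in $11$-dimensional families through the general point of $\mm_{11}$, which every $|nB'|$ must contract. Your closing suggestion (covering curves with vanishing intersection against $\phi^*(\mathfrak{bn}_{11})$) points at the right tool but at the wrong conclusion: for $\Delta_{0:T}$ the relevant property is negative intersection with $\delta_{0:T}$, establishing it as a fixed component, not verticality.
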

\begin{proof} To simplify the proof, we define a few divisors classes on $\mm_{11, 11}$:
 $$A:=2\cdot \phi^*(\mathfrak{bn}_{11})+\sum_{i\geq 0, c} d_{i: c}\ \delta_{i: c}\equiv K_{\mm_{11, 11}}-\dd_{11}\ \mbox{ and } \ A':=A-\sum_{c=2}^{11} d_{0: c}\
 \delta_{0: c},$$
 as well as, $B:= \mathfrak{bn}_{11}+4\delta_3+7\delta_4+8\delta_5\in \mbox{Pic}(\mm_{11})$.

We claim that for all integers $n\geq 1$, one has isomorphisms,
$$H^0\bigl(\mm_{11, 11}, \OO_{\mm_{11, 11}}(nA)\bigr)\cong H^0\bigl(\mm_{11, 11}, \OO_{\mm_{11, 11}}(nA')\bigr).$$
Indeed, we fix a set of labels $T\subset \{1, \ldots, 11 \}$ such that $\#(T)\geq 2$ and consider a pencil
$$\bigl\{[C_t, x_i(t), p(t): i\in T^c]\bigr\}_{t\in \PP^1} \subset \mm_{11, 12-\#(T)}, $$ of $(12-\#(T))$-pointed curves of genus $11$ on a general $K3$ surface $S$, with marked points being labeled by elements in $T^c$ as well by another label $p(t)$. The pencil is induced by a fibration obtained from a Lefschetz pencil of genus $11$ curves on $S$, with regular sections given by $(12-\#(T))$ of the exceptional divisors obtained by blowing-up $S$ at the $(2g-2)$ base points of the pencil. To each element in this pencil, we attach at the marked point labeled by $p(t)$, a fixed copy of $\PP^1$ together with fixed marked points $x_i\in \PP^1-\{\infty\}$, for $i\in T$. The gluing identifies the point $p(t)\in C_t$ with $\infty\in \PP^1$. If $R_T\subset \mm_{11, 11}$ denotes the resulting family,  we compute:
$$R_T\cdot \lambda=g+1,\ R_T\cdot \delta_{\mathrm{irr}}=6(g+3), \ \ R_T\cdot \delta_{0: T}=-1,\ R_T\cdot \psi_i=1 \mbox{ for }i\in T^c, \  \ R_T\cdot \psi_i=0\mbox{ for }i\in T.$$ Moreover, $R_T$ is disjoint from all remaining boundary divisors of $\mm_{11, 11}$. One finds that
$R_T\cdot \phi^*(\mathfrak{bn}_{11})=0$.  Thus for any effective divisor $E\subset \mm_{11, 11}$ such that $E\equiv nA$, we find that $R_T\cdot E=-n d_{0, c}$.

Since for all $T$, the pencil $R_T$  fills-up the divisor $\Delta_{0: T}$, we can deform the curves $R_T\subset \Delta_{0: T}$, to find that $E-\sum_{c=2}^{11}nd_{0: c} \cdot \delta_{0: c}$
is still an effective class, that is,
$$|nA|=\sum_{c=2}^{11} nd_{0: c}\cdot \Delta_{0: c}+|nA'|,$$
which proves the claim.
Next, by direct calculation we observe that the class $A'-2\phi^*(B)$ is effective.  Zariski's Main Theorem gives that  $\phi_*\phi^*\OO_{\mm_{11}}(B)=\OO_{\mm_{11}}(B)$, thus
$$\kappa\bigl(\mm_{11, 11}, A'\bigr)\geq \kappa\bigl(\mm_{11, 11}, \phi^*(B)\bigr)=\kappa(\mm_{11}, B)=19.$$
The last equality comes from \cite{FP} Proposition 6.2: The class $B$ contains the pull-back of an ample class under the Mukai map \cite{M2} $$q_{11}: \mm_{11, 11}\dashrightarrow \ff_{11},\ \  \ [C, x_1, \ldots, x_{11}]\mapsto [S\supset C,\  \OO_S(C)],$$
to a compactification of the moduli space of polarized $K3$ surfaces of degree $20$.

On the other hand, since $\phi^*(\delta_i)=\sum_{S} \delta_{i: S}$ for $1\leq i\leq 5$, there is a divisor class on $\mm_{11}$ of type
$B':= 2\cdot \mathfrak{bn}_{11}+\sum_{i=1}^5 a_i \delta_i\in \mathrm{Pic}(\mm_{11})$,
with $a_i\geq 0$, such that $\phi^*(B')-A'$ is an effective divisor. It follows that
$$\kappa\bigl(\mm_{11, 11}, A'\bigr)\leq \kappa\bigl(\mm_{11, 11}, \phi^*(B')\bigr)=\kappa(\mm_{11}, B').$$
If $R_{11} \subset \mm_{11}$ is the family corresponding to a Lefschetz pencil of curves of genus $11$ on a fixed $K3$ surface, then $R_{11}\cdot B'=0$. The pencil $R_{11}$ moves in a $11$-dimensional family inside $\mm_{11}$ which is contracted to a point by any linear series $|nB'|$ on $\mm_{11}$ with $n\geq 1$ (in fact a general curve $R_{11}$ is \emph{disjoint} from the base locus of $|nB'|$). One
finds that $\kappa(\mm_{11}, B')\leq 19$, which completes the proof. The case of  $\cc_{11, 11}$ proceeds with obvious modifications.
\end{proof}
\section{The Kodaira dimension of $\cc_{10, 10}$}
The geometry of $\mm_{10}$ is governed to a large extent by the divisor $\kk_{10}$ of curves lying on $K3$ surfaces. It is shown in \cite{FP} that $\kk_{10}$ is an irreducible divisor of class $$\kk_{10}\equiv 7\lambda-\delta_0-5\delta_1-9\delta_2-12\delta_3-14\delta_4-b_5\delta_5\in \mathrm{Pic}(\mm_{10}),$$
where $b_5\geq 6$. Furthermore, $s(\kk_{10})=7$ is the minimal slope of an effective divisor on $\mm_{10}$.
The irreducible divisor $\kk_{10}$ is rigid. Indeed, if $R_{10}\subset \kk_{10}$ is the covering family obtained by blowing-up the base points of a pencil of curves of genus $10$ on a fixed $K3$ surface, then $R_{10}\cdot \kk_{10}=-1$ (see \cite{FP} Lemma 2.1). Since $R_{10}\cdot \delta_i=0$ for $1\leq i\leq 5$, this argument proves that any divisor $D\in \mathrm{Eff}(\mm_{10})$ with $s(D)=s(\kk_{10})$, is rigid as well.

Comparing the expression of $[\kk_{10}]$ with that of $K_{\cc_{10, 10}}$, we obtain the formula
\begin{equation}\label{canrep10}
K_{\cc_{10, 10}}\equiv \widetilde{\mathcal{D}}_{10}+ 2\cdot  \varphi^*(\kk_{10})+\sum_{(i, c)\neq (0, 2)} d_{i: c}\ \widetilde{\delta}_{i: c},
\end{equation}
where remarkably, the coefficients $d_{i: c}$ have exactly the same values as in formula (\ref{canrep10}) for $1\leq i\leq 4$, while
$d_{5: c}=2b_5-2+{|c-5|+1\choose 2}$, for $0\leq c\leq 10$. We point out that the coefficient $d_{0: 2}$ of $\widetilde{\delta}_{0: 2}$ in formula (\ref{canrep10}), equals $0$.

\begin{theorem}
For each $n\geq 1$,  there is an isomorphism of groups
$$H^0\bigl(\cc_{10, 10}, \OO_{\cc_{10, 10}}(nK_{\cc_{10, 10}})\bigr)\cong H^0\bigl(\cc_{10, 10}, \OO_{\cc_{10, 10}}(nK_{\cc_{10, 10}}-n\widetilde{\mathcal{D}}_{10})\bigr).$$
\end{theorem}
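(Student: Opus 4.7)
The proof should follow the template of Proposition \ref{eliminate} almost verbatim, with the role of the covering family $R\subset \mm_{11,11}$ now played by the $1$-cycle $\widetilde{\Gamma}\in NE_1(\cc_{10, 10})$ arising from the $g=10$ part of Proposition \ref{extrem} (the construction that uses a nodal genus $11$ curve $X=C_{ij}$ on a K3 surface, rather than a smooth K3 model of the genus $10$ curve itself). The key is that the numerical coincidence $\widetilde{\Gamma}\cdot K_{\cc_{g,g}}=2g-21$ from Corollary \ref{numericalsym} becomes $-1$ exactly for $g=10$, which is precisely what makes the argument go through in this case, parallel to the equality $R\cdot K_{\mm_{11,11}}=-1$ used in genus $11$.

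More precisely, the plan is the following. First, invoke Proposition \ref{extrem} and Corollary \ref{numericalsym} with $g=10$ to produce a family of rational curves $\widetilde{\Gamma}\subset \cc_{10,10}$ whose members fill up $\widetilde{\mathcal{D}}_{10}$ and satisfy
$$\widetilde{\Gamma}\cdot \widetilde{\lambda}=11,\ \ \widetilde{\Gamma}\cdot \widetilde{\delta}_{\mathrm{irr}}=77, \ \ \widetilde{\Gamma}\cdot \widetilde{\psi}=11, \ \ \widetilde{\Gamma}\cdot \widetilde{\delta}_{0:2}=1,$$
and $\widetilde{\Gamma}\cdot \widetilde{\delta}_{i:c}=0$ for all other $(i,c)$. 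Plugging these into (\ref{dgtilde}) gives $\widetilde{\Gamma}\cdot \widetilde{\mathcal{D}}_{10}=-11+11-\binom{2}{2}=-1$, and Corollary \ref{numericalsym} directly yields $\widetilde{\Gamma}\cdot K_{\cc_{10,10}}=-1$.

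Now fix $n\geq 1$ and an arbitrary effective divisor $E$ with $E\equiv nK_{\cc_{10,10}}$. Decompose $E=m\widetilde{\mathcal{D}}_{10}+E'$ with $\widetilde{\mathcal{D}}_{10}\not\subset \mathrm{Supp}(E')$. Since $\widetilde{\Gamma}$ is a covering family of $\widetilde{\mathcal{D}}_{10}$, a general member is not contained in $E'$, hence $\widetilde{\Gamma}\cdot E'\geq 0$. Combined with $\widetilde{\Gamma}\cdot E=-n$ and $\widetilde{\Gamma}\cdot \widetilde{\mathcal{D}}_{10}=-1$, this gives
$$-n=\widetilde{\Gamma}\cdot E=-m+\widetilde{\Gamma}\cdot E'\geq -m,$$
so $m\geq n$. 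Therefore $E-n\widetilde{\mathcal{D}}_{10}$ is effective, which shows $|nK_{\cc_{10,10}}|=n\widetilde{\mathcal{D}}_{10}+|nK_{\cc_{10,10}}-n\widetilde{\mathcal{D}}_{10}|$ and yields the stated isomorphism on sections.

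The only nontrivial point is confirming that the family $\widetilde{\Gamma}$ genuinely covers $\widetilde{\mathcal{D}}_{10}$, but this is guaranteed by the final line of the proof of Proposition \ref{extrem}, where it is shown that each $\Gamma_{ij}$ fills up $\dd_{10}$ as the initial data $(S,X,Z)$ varies; pushing forward via $\pi$ shows the same for $\widetilde{\Gamma}\subset \cc_{10,10}$. Unlike the genus $11$ setting, no appeal to Lazarsfeld's theorem on Brill--Noether for K3 curves is necessary here, because the coincidence $\widetilde{\Gamma}\cdot K_{\cc_{10,10}}=-1$ is already built into the intersection table of Corollary \ref{numericalsym}; in this sense the argument is cleaner than for $g=11$.
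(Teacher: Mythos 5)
Your proposal is correct and follows essentially the same route as the paper: both rely on the $g=10$ case of Proposition \ref{extrem} and the intersection numbers of Corollary \ref{numericalsym} to get a covering curve $\widetilde{\Gamma}$ of $\widetilde{\mathcal{D}}_{10}$ with $\widetilde{\Gamma}\cdot\widetilde{\mathcal{D}}_{10}=\widetilde{\Gamma}\cdot K_{\cc_{10,10}}=-1$ and zero intersection with the other relevant generators, and then run the standard fixed-component argument. Your computations check out, and your observation that no Brill--Noether input \`a la Lazarsfeld is needed here (unlike in genus $11$) is accurate.
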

\begin{proof}
We use Corollary \ref{numericalsym}. Through a general point of the divisor  $\dd_{10}$ on $\mm_{10, 10}$ there passes a curve
$\widetilde{\Gamma} \subset \cc_{10, 10}$ such that
$\widetilde{\Gamma} \cdot \widetilde{\mathcal{D}}_{10}=\widetilde{\Gamma}\cdot K_{\cc_{10, 10}}=-1$ and $\Gamma \cdot \widetilde{\delta}_{i: c}=0$ for $(i, c)\neq (0, 2)$. One obtains that $|nK_{\cc_{10, 10}}|=n\widetilde{D}_{10}+|nK_{\cc_{10, 10}}-n\widetilde{D}_{10}|$.
\end{proof}

\vskip 3pt
\noindent
\emph{End of proof of Theorem \ref{picard} when $g=10$.}
We define the following divisor classes on $\cc_{10, 10}$:
$$A:=2\cdot \varphi^*(\kk_{10})+\sum_{(i, c)\neq (0, 2)} d_{i: c}\ \widetilde{\delta}_{i: c}\equiv K_{\cc_{10, 10}}-\widetilde{\mathcal{D}}_{10}\ \ \mbox{ and } \ \ A':=A-\sum_{c=3}^{10} d_{0: c}\ \widetilde{\delta}_{0: c}.$$
\noindent
We claim that $H^0\bigl(\cc_{10, 10}, \OO_{\cc_{10, 10}}(nA)\bigr)\cong H^0\bigl(\cc_{10, 10}, \OO_{\cc_{10, 10}}(nA')\bigr)$ for all $n\geq 1$.  Indeed, we fix a set of labels $T\subset \{1, \ldots, 10\}$ with $c:=\#(T)\geq 3$, as well as two indexes $i, j\in T^c$ and consider the $1$-cycle $\Gamma_{ij}\subset \mm_{10, 11-c}$ constructed in Proposition \ref{extrem}. We label by $\{p(t), x_l(t): t\in \PP^1\}_{l\in T^c}$ the sections of the family. We obtain a covering curve $\Gamma'_{ij}$ for the divisor $\Delta_{0: T}\subset \mm_{10, 10}$, by attaching along the section $p(t)$ a fixed $(c+1)$-pointed rational curve to each of the curves in $\Gamma_{ij}$, in a way that the marked points labeled by $T$ are precisely those lying on the rational component. Then $\widetilde{\Gamma}_{0:c}:=\pi_*(\Gamma'_{ij})\subset \cc_{10, 10}$ is a covering curve for $\widetilde{\Delta}_{0: c}$. From Corollary \ref{numericalsym}, $\widetilde{\Gamma}_{0: c}\cdot \widetilde{\delta}_{0: c}<0$ and $\widetilde{\Gamma}_{0: c}$ has intersection number $0$ with all the components of $\mbox{supp}(A)-\widetilde{\Delta}_{0: c}$ (Note that $\widetilde{\Delta}_{0: 2}$ does not appear among these components). We repeat this argument for all divisors $\widetilde{\Delta}_{0: c}$, where $3\leq c\leq g$, and the claim becomes obvious. We finish the proof by using the same argument as at the end of the proof of Theorem \ref{eliminate}: There exists an effective class $B'\in \mathrm{Eff}(\mm_{10})$ such that $B'-\kk_{10}$ is effective, $s(B')=s(\kk_{10})=7$, and such that $\varphi^*(B')-A'$ is effective. Then $\kappa(\cc_{10, 10})=\kappa(\cc_{10, 10}, A')\leq \kappa(\cc_{10, 10}, \varphi^*(B'))=\kappa(\mm_{10}, B')=0$.
$\hfill$ $\Box$

\section{The uniruledness of $\mm_{g, n}$}

We formulate two general principles which we use in proving the uniruledness of some moduli spaces $\mm_{g, n}$. We begin with the following  consequence of \cite{BDPP}:
\begin{proposition}\label{uniruled0}
Let $X$ be a normal projective $\mathbb Q$-factorial variety and $D\subset X$ an irreducible divisor filled-up by curves $\Gamma\subset X$, such that $\Gamma\cdot D\geq 0$ and $\Gamma\cdot K_X<0$. Then $X$ is uniruled.
\end{proposition}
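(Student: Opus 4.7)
The plan is to deduce the result directly from the Boucksom-Demailly-Peternell-P\u{a}un characterization of uniruledness: $X$ is uniruled if and only if $K_X$ is not pseudo-effective. So I would argue by contradiction, assuming $K_X$ is pseudo-effective, and derive a contradiction from the existence of the covering family $\{\Gamma\}$.

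First, if $K_X$ is pseudo-effective, by definition of the pseudo-effective cone $\overline{\mathrm{Eff}}(X)\subset N^1(X)_{\mathbb R}$, the class $[K_X]$ is a limit (in the N\'eron-Severi vector space) of classes of effective $\mathbb Q$-divisors $E_n\geq 0$. The key is to show that each intersection number $\Gamma\cdot E_n$ is non-negative; then passing to the limit contradicts the hypothesis $\Gamma\cdot K_X<0$.

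To bound $\Gamma\cdot E_n$ from below, I decompose each $E_n$ according to whether $D$ appears in its support. Since $X$ is $\mathbb Q$-factorial, I can write a (positive multiple of a) Cartier representative of $E_n$ as $m_n D+E'_n$, where $m_n\geq 0$ is the coefficient of $D$ in $E_n$, and $E'_n\geq 0$ is an effective $\mathbb Q$-divisor whose support does not contain $D$. The curves $\Gamma$ fill-up $D$, hence they move in a family $\{\Gamma_t\}_{t\in T}$ with $\overline{\cup_t \Gamma_t}=D$. Because $E'_n$ does not contain $D$ as a component, a general member $\Gamma_t$ of the family is \emph{not} contained in $\mathrm{supp}(E'_n)$, and thus $\Gamma_t\cdot E'_n\geq 0$. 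Combined with the hypothesis $\Gamma_t\cdot D=\Gamma\cdot D\geq 0$ (which is numerical and deformation-invariant) and $m_n\geq 0$, this yields
\[
\Gamma\cdot E_n \;=\; m_n(\Gamma\cdot D)+\Gamma_t\cdot E'_n\;\geq\; 0.
\]
Passing to the limit gives $\Gamma\cdot K_X\geq 0$, contradicting the assumption $\Gamma\cdot K_X<0$.

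The only non-routine point is the decomposition argument together with the deformation of $\Gamma$ inside $D$ used to guarantee $\Gamma_t\cdot E'_n\geq 0$; one must invoke that $\Gamma$ is a \emph{covering} family of $D$ (so a general deformation avoids any prescribed proper closed subset not containing $D$), and use $\mathbb Q$-factoriality to make sense of the decomposition $E_n=m_n D+E'_n$ as Weil/Cartier divisors. Once this is in place, the conclusion follows from BDPP applied to $X$.
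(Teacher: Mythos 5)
Your argument is correct and establishes the same key fact as the paper --- that $K_X$ is not pseudo-effective --- but packages that step differently. The paper sets $\alpha:=\sup\{t\in\mathbb{Q}_{\geq 0}: K_X-tD\in\overline{\mathrm{Eff}}(X)\}$ and invokes the maximality of $\alpha$ to conclude that the residual class $A=K_X-\alpha D$ pairs non-negatively with the covering curve, whence $\Gamma\cdot K_X=\alpha\,(\Gamma\cdot D)+\Gamma\cdot A\geq 0$; you instead unwind the definition of pseudo-effectivity, approximating $K_X$ by effective $\mathbb{Q}$-divisors $E_n$ and splitting off the $D$-component of each. The two arguments are essentially equivalent --- the paper's assertion that maximality of $\alpha$ forces $\Gamma\cdot A\geq 0$ is justified by exactly your decomposition applied to the class $A$ --- so your version is, if anything, the more explicit one, and your care with the general member of the family avoiding $\mathrm{supp}(E'_n)$ is exactly what is needed. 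The one step you elide is the final appeal to \cite{BDPP}: their characterization of uniruledness is stated for projective manifolds, whereas $X$ here is only normal and $\mathbb{Q}$-factorial. The paper therefore passes to a resolution $\mu:X'\to X$, writes $K_{X'}\equiv\mu^*(K_X)+\sum_i a_iE_i$ with the $E_i$ $\mu$-exceptional, and checks (in effect by pushing forward to $X$) that $K_{X'}$ cannot be pseudo-effective once $K_X$ is not; you should include this standard reduction rather than applying the theorem to the singular $X$ directly.
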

\begin{proof}
First note that $K_X$ is not pseudo-effective. Assume that on the contrary, $K_{X}$ lies in the closure $\overline{\mbox{Eff}}(X)$ of the effective cone of $X$, set $\alpha:=\mbox{sup}\{t\in \mathbb Q_{\geq 0}: K_X-t D\in \overline{\mbox{Eff}}(X)\}$, and write   $A:=K_{X}-\alpha  D\in \overline{\mbox{Eff}}(X)$. The curve $\Gamma$ deforms in a family filling-up $D$, hence because of the maximality of $\alpha$, one has $\Gamma\cdot A\geq 0$ and $\Gamma \cdot K_X=\alpha \Gamma \cdot D+\Gamma \cdot A\geq 0$, which is a contradiction.
Thus $K_X\notin \overline{\mbox{Eff}}(X)$ and we claim that this implies that $X$ is uniruled. Indeed, let  us consider a resolution $\mu:X'\rightarrow X$ of $X$. It is enough to prove that $X'$ is uniruled and according to \cite{BDPP} this is equivalent to showing that $K_{X'}$ is not pseudo-effective. But $K_{X'}\equiv \mu^*(K_X)+\sum_{i} a_i E_i$, where $a_i\in \mathbb Q$ and the divisors $E_i\subset X'$ are the components of the exceptional locus $\mbox{Exc}(\mu)$. Since the divisors $E_i$ are $\mu$-exceptional and $K_X$ is not pseudo-effective, we find that no divisor of the form $\mu^*(K_X)+\sum_i a_i' E_i$, where $a_i'\geq 0$, can be pseudo-effective either. In particular, $K_{X'}$ is not pseudo-effective, hence $X'$ is uniruled.
\end{proof}


We can extend this principle to the case of several divisors as follows:
\begin{proposition}\label{uniruled}
Let $X$ be a normal projective $\mathbb Q$-factorial variety and suppose $D_1, D_2\subset X$ are irreducible effective $\mathbb Q$-divisors such that there exist covering curves
$\Gamma_i\subset D_i$, with $\Gamma_i\cdot D_i<0$ for $i=1, 2$ (in particular both $D_i\in \mathrm{Eff}(X)$ are non-movable divisors). Assume furthermore that
\begin{equation}
\begin{vmatrix}
\Gamma_1\cdot D_1& \Gamma_1\cdot D_2\\
\Gamma_2\cdot D_1& \Gamma_2\cdot D_2\\
\end{vmatrix} \leq 0,\mbox{ }\mbox{ }\
\begin{vmatrix}
\Gamma_1\cdot K_X& \Gamma_1\cdot D_1\\
\Gamma_2\cdot K_X& \Gamma_2\cdot D_1\\
\end{vmatrix}<0.
\end{equation}
Then $X$ is uniruled.
\end{proposition}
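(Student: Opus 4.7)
The plan is to mirror the proof of Proposition \ref{uniruled0}: I would show that $K_X$ is not pseudo-effective, and then pull back to a resolution of $X$ and invoke \cite{BDPP} to conclude uniruledness. The new feature compared with the single-divisor case is that one must ``peel off'' both rigid divisors $D_1$ and $D_2$ from $K_X$ in succession, and then extract the contradiction from the resulting two-dimensional system of intersection inequalities.

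Concretely, I would assume $K_X\in \overline{\mathrm{Eff}}(X)$ and set
$$\alpha_1:=\sup\{t\geq 0: K_X-tD_1\in \overline{\mathrm{Eff}}(X)\}, \ \ A_1:=K_X-\alpha_1 D_1\in \overline{\mathrm{Eff}}(X),$$
$$\alpha_2:=\sup\{t\geq 0: A_1-tD_2\in \overline{\mathrm{Eff}}(X)\}, \ \ A_2:=A_1-\alpha_2 D_2\in \overline{\mathrm{Eff}}(X).$$
The moving-curve argument from Proposition \ref{uniruled0} applied to $\Gamma_2$ and $D_2$ (using $\Gamma_2\cdot D_2<0$) gives $\Gamma_2\cdot A_2\geq 0$ by the maximality of $\alpha_2$. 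The more delicate point, which I expect to be the main obstacle, is to show additionally that $\Gamma_1\cdot A_2\geq 0$: here one argues that if $A_2-\epsilon D_1\in \overline{\mathrm{Eff}}(X)$ for some $\epsilon>0$, then adding the pseudo-effective class $\alpha_2 D_2$ yields $K_X-(\alpha_1+\epsilon)D_1\in \overline{\mathrm{Eff}}(X)$, violating the maximality of $\alpha_1$. Hence $A_2$ admits no further $D_1$-peeling either, and the same moving-curve reasoning now applied to $\Gamma_1$ and $A_2$ produces $\Gamma_1\cdot A_2\geq 0$.

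Setting $k_i:=\Gamma_i\cdot K_X$ and $d_{ij}:=\Gamma_i\cdot D_j$, the endgame is a Farkas-type positive combination. Take $\lambda_1:=d_{21}$ and $\lambda_2:=-d_{11}>0$, both non-negative in the geometrically natural situation where $\Gamma_2$ is not contained in $D_1$. A direct calculation yields $\lambda_1 d_{11}+\lambda_2 d_{21}=0$ and $\lambda_1 d_{12}+\lambda_2 d_{22}=-(d_{11}d_{22}-d_{12}d_{21})\geq 0$ by the first determinant hypothesis. Combining with $\Gamma_i\cdot A_2\geq 0$, one finds
$$0\leq \lambda_1(\Gamma_1\cdot A_2)+\lambda_2(\Gamma_2\cdot A_2)=\lambda_1 k_1+\lambda_2 k_2-\alpha_2(\lambda_1 d_{12}+\lambda_2 d_{22})\leq d_{21}k_1-d_{11}k_2,$$
in which the $\alpha_1$-contribution has cancelled by construction. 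The right-hand side is precisely $k_1 d_{21}-k_2 d_{11}$, which is strictly negative by the second determinant hypothesis---a contradiction. This proves $K_X\notin \overline{\mathrm{Eff}}(X)$, and the passage to the uniruledness of $X$ then proceeds verbatim as in the final paragraph of the proof of Proposition \ref{uniruled0}.
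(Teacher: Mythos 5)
Your proposal is correct and follows essentially the same route as the paper: the paper chooses $\alpha,\beta\geq 0$ maximal with $K_X-\alpha D_1-\beta D_2\in\overline{\mathrm{Eff}}(X)$, derives the two inequalities $\Gamma_i\cdot K_X\geq \alpha(\Gamma_i\cdot D_1)+\beta(\Gamma_i\cdot D_2)$ from the covering-curve property, and eliminates $\alpha$ to contradict $\beta\geq 0$ via the two determinant hypotheses --- exactly your endgame with $(\alpha,\beta)=(\alpha_1,\alpha_2)$. Your sequential peeling is just a more carefully justified version of the paper's joint maximization (and you rightly flag the implicit use of $\Gamma_2\cdot D_1\geq 0$, which the paper also relies on silently and which holds in all its applications).
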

\begin{proof} According to \cite{BDPP}, it suffices to prove that $K_X$ is not pseudo-effective. By contradiction, we choose $\alpha, \beta\in \mathbb R_{\geq 0}$ maximal such that $K_X-\alpha D_1-\beta D_2\in \overline{\mathrm{Eff}}(X)$. Then we can write down the inequalities
$$\Gamma_1 \cdot K_X\geq \alpha (\Gamma_1\cdot D_1) +\beta (\Gamma_1\cdot D_2) \ \mbox{ and } \Gamma_2\cdot K_X\geq \alpha (\Gamma_2\cdot D_1)+\beta (\Gamma_2\cdot  D_2).$$
Eliminating $\alpha$, the resulting inequality contradicts the assumption $\beta\geq 0$.
\end{proof}
We turn our attention to the proof of Theorem \ref{genul8} which we split in three parts:

\begin{theorem}\label{gen5} $\mm_{5, n}$ is uniruled for $n\leq 13$.
\end{theorem}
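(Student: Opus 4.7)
The plan is to show that $K_{\mm_{5,n}}$ is not pseudo-effective for each $n\leq 13$, from which uniruledness follows by \cite{BDPP} (cf.\ Propositions~\ref{uniruled0} and~\ref{uniruled}). Since
$$K_{\mm_{5,n}}\equiv 13\lambda-2\delta_{\mathrm{irr}}+\sum_{i=1}^{n}\psi_i-2\sum_{i\geq 0,\,T}\delta_{i:T}-\delta_{1:\emptyset}$$
gains one positive $\psi_i$ per marked point, the tightest case is $n=13$; once it is handled, the cases $n<13$ follow either by specialising the same constructions (marking fewer points, leaving the rest as constant sections) or, for small $n$, by direct unirationality through the nodal plane-sextic model of a general genus-$5$ curve.

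For $n=13$, the strategy is to apply Proposition~\ref{uniruled} to $X=\mm_{5,13}$ with two extremal uniruled divisors $D_1,D_2$. I would take $D_1$ to be the pullback of the Abel--Jacobi divisor $\dd_5\subset\mm_{5,5}$ under the forgetful morphism $\mm_{5,13}\to\mm_{5,5}$ remembering, say, the first five marked points; its class is the pullback of the explicit expression for $\dd_g$ recorded in the introduction, contributing $-\lambda+\sum_{i=1}^{5}\psi_i$ minus boundary terms. For $D_2$ I would take a boundary component $\Delta_{0:T}$, with $|T|$ tuned so that the intersection matrix below has the correct signs; a natural alternative candidate is $\phi^*(\mm^1_{5,3})$, the pullback of the trigonal Brill--Noether divisor on $\mm_5$.

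Through a general point of $D_1$ I would construct a covering curve $\Gamma_1$ by adapting the K3-pencil argument of Proposition~\ref{extrem}: start from a general canonical curve $[C]\in\cM_5$ lying on a K3 surface $S\subset\PP^5$ of genus $5$ as a hyperplane section (such K3s dominate $\cM_5$ since $g=5<11$), force the first five marked points to span a $\PP^3\subset\PP^4$, and pencil out the hyperplane sections of $S$ passing through the corresponding $\PP^3\subset\PP^5$. The pencil has $S\cdot\PP^3=8$ base points; five of these are the moving marked points, the other three become additional marked sections, and the remaining five marked points are supplied by attaching a fixed rational tail at one of the base points. The intersection numbers of $\Gamma_1$ against the standard generators $\lambda,\delta_{\mathrm{irr}},\psi_i,\delta_{i:T}$ are then computed as in~(\ref{numericalparameters}). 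A parallel pencil construction yields a covering curve $\Gamma_2\subset D_2$ adapted to the second divisor.

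The main obstacle is the simultaneous verification of the two determinantal inequalities in Proposition~\ref{uniruled}: the negative intersections $\Gamma_i\cdot D_i$ must jointly dominate the positive contributions $13(\Gamma_i\cdot\lambda)+\sum_j\Gamma_i\cdot\psi_j$ coming from $K_{\mm_{5,13}}$. The careful calibration of $|T|$ in the choice of $D_2$, together with the split between moving and fixed marked points in $\Gamma_1$, is precisely what locates the cutoff at $n=13$, and the analogous numerics should just fail at $n=14$, consistent with the bound $f(5)=13$ in the table.
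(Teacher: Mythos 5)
Your overall strategy (reduce to $n=13$, show $K_{\mm_{5,13}}$ is not pseudo-effective, conclude by \cite{BDPP}) matches the paper's, but the execution has a genuine gap, and the paper's actual argument is different and simpler. The paper does not invoke the two-divisor Proposition \ref{uniruled} in genus $5$: it produces a \emph{single} family of curves sweeping out all of $\mm_{5,13}$ with negative $K$-degree. A general $[C,x,y]\in \cM_{5,2}$ carries finitely many $L\in W^2_6(C)$ whose $5$-nodal plane sextic model sends $x$ and $y$ to the same node $p_1$; taking a pencil of such sextics through $11$ further assigned points, one gets $11$ sections from the exceptional divisors over the assigned base points and, after a degree-$2$ base change along the $2$-section $E_{p_1}$ and blowing up the two points where the resulting sections meet, two more sections. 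The resulting $\Gamma\subset\mm_{5,13}$ has $\Gamma\cdot\lambda=10$, $\Gamma\cdot\delta_{\mathrm{irr}}=80$, $\Gamma\cdot\psi_x=\Gamma\cdot\psi_y=5$, $\Gamma\cdot\psi_{x_i}=2$, $\Gamma\cdot\delta_{0:xy}=2$, whence $\Gamma\cdot K_{\mm_{5,13}}=-2$, and a parameter count shows these curves pass through a general point of $\mm_{5,13}$. That alone kills pseudo-effectivity of the canonical class.

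The concrete gap in your proposal is the covering curve $\Gamma_1$ for $D_1$, the pullback of $\dd_5$. The pencil of hyperplanes of $\PP^5$ through the $\PP^3$ spanned by $x_1,\ldots,x_5$ has base locus $S\cap \PP^3$ of length $\deg S=8$, so the associated fibration carries at most $8$ sections; there is no way to mark $13$ points on it. Attaching a fixed $5$-pointed rational tail along one of these sections confines the entire family to a boundary divisor $\Delta_{0:T}$, so it sweeps out only a codimension-$2$ locus of $D_1$ and is \emph{not} a covering curve for $D_1$; the inequality $\Gamma_1\cdot(K_X-\alpha D_1-\beta D_2)\geq 0$ on which Proposition \ref{uniruled} rests is then unjustified. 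Moreover, you compute no intersection numbers, so neither determinantal inequality is verified, and at $g=5$, $n=13$ the margin is razor-thin (the paper's family achieves only $K\cdot\Gamma=-2$), so the numerics cannot be taken on faith. Your closing remark that small $n$ can be handled by unirationality via plane models is correct (this is how \cite{CF} treats $n\leq 12$), but the case $n=13$, which is the whole content of the theorem, is not established by the proposal.
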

\begin{proof}
A general $2$-pointed curve $[C, x, y]\in \cM_{5, 2}$ carries a finite number of linear series $L\in W^2_6(C)$, such that
if $\nu_L: C\stackrel{|L|}\longrightarrow \Gamma\subset \PP^2$ is the induced plane model, then $\nu_L(x)=\nu_L(y)=p_1$. Note that
$\Gamma$ has nodes, say $p_1, \ldots, p_{5}$, and $\mbox{dim }|\OO_{\PP^2}(\Gamma)(-2\sum_{i=1}^5 p_i)|=12$.

We pick general points $\{x_i\}_{i=1}^{11}$ and $\{p_j\}_{j=1}^5 \subset \PP^2$, then consider the pencil of sextics passing with multiplicity $1$ through $x_1, \ldots, x_{11}$ and having nodes (only) at $p_1, \ldots, p_5$. The pencil induces a fibration $f':S\rightarrow \PP^1$, where $S:=\mathrm{Bl}_{21}(\PP^2)$ is obtained from $\PP^2$ by blowing-up $p_1, \ldots, p_5$, $x_1, \ldots, x_{11}$, as well as the remaining unassigned base points of the pencil. The exceptional divisors $E_{x_i}\subset S$ provide $11$ sections of $f'$. The exceptional divisor $E_{p_1}$ induces a $2$-section. Making a base change via the map $f'_{E_{p_1}}: E_{p_1}\rightarrow \PP^1$,  the $2$-section $E_{p_1}$ splits into two sections $E_x$ and $E_y$  meeting at $2$ points. Blowing these points up, we arrive at a
fibration $f:Y\rightarrow E_{p_1}$, carrying $13$ everywhere disjoint sections, $\tilde{E}_x, \tilde{E}_y, \tilde{E}_{x_1}, \ldots, \tilde{E}_{x_{11}}$, where $\tilde{E}_{x_i}\subset Y$ denotes the inverse image of $E_{x_i}$, and $\tilde{E}_x, \tilde{E}_y$ denote the proper transforms of $E_x$ and $E_y$ respectively. This induces a family of pointed stable curves
$$\Gamma:=\bigl\{[C_{\lambda}:=f^{-1}(\lambda),\  \tilde{E}_x\cdot C_{\lambda},\  \tilde{E}_y\cdot C_{\lambda},\  \ \tilde{E}_{x_1}\cdot C_{\lambda}, \ldots, \ \tilde{E}_{x_{11}}\cdot C_{\lambda}]:
\lambda\in E_{p_1}\bigr\}\subset \mm_{5, 13}.$$

We compute the numerical characters of $\Gamma$ (see also the proof of Proposition \ref{extrem}):
$$\Gamma\cdot \lambda=\mathrm{deg}(f_{E_{p_1}})\bigl(\chi(S, \OO_S)+g-1\bigr)=10,\ \Gamma\cdot \delta_{\mathrm{irr}}=\mathrm{deg}(f_{E_{p_1}})\bigl(c_2(S)+4g-4\bigr)=80,$$
$$\Gamma\cdot \psi_x=\Gamma\cdot \psi_y=5, \ \ \Gamma\cdot \psi_{x_1}=\cdots =\Gamma\cdot \psi_{x_{11}}=2, \ \ \Gamma\cdot \delta_{0: xy}=2,$$
whereas $\Gamma$ is disjoint from the remaining boundary divisors.
One finds, $\Gamma\cdot K_{\mm_{5, 13}}=-2$, which completes the proof.
\end{proof}
\begin{remark} It is known that $\mm_{5, 15}$ is of general type, \cite{F1} p. 865. Using the fact that $12$ general points in $\PP^4$ determine a canonical curve of genus $5$, it is proved in \cite{CF} that $\mm_{5, n}$ is rational when $n\leq 12$. Hence Theorem \ref{gen5} settles the cases  $\mm_{5, 13}$.
\end{remark}

\begin{theorem}\label{gen78} $\mm_{8, n}$ is uniruled for $n\leq 12$.
\end{theorem}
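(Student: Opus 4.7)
Following the template of Theorem~\ref{gen5}, the plan is to exhibit a rational $1$-parameter family $\Gamma \subset \mm_{8,12}$ of pointed stable curves with $\Gamma \cdot K_{\mm_{8,12}} < 0$, verify the covering hypotheses of Proposition~\ref{uniruled0} (or its two-divisor enhancement Proposition~\ref{uniruled}), and conclude uniruledness of $\mm_{8,12}$. The cases $n < 12$ then reduce to $n=12$ via the forgetful morphism $\mm_{8,12} \to \mm_{8,n}$, since a dominant rational image of a uniruled variety is uniruled.

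The most natural starting construction is a Lefschetz pencil of hyperplane sections on a general primitively polarised K3 surface $S \subset \PP^8$ of genus $8$ (general genus $8$ curves lie on such K3 surfaces by Mukai): fix a codimension-two linear subspace $\Lambda \cong \PP^6 \subset \PP^8$ and take the pencil of hyperplanes through $\Lambda$, producing a fibration $\tilde S \to \PP^1$ with $14$ sections coming from the base points $\Lambda \cap S$. Marking $12$ of these $14$ sections yields $\Gamma \subset \mm_{8,12}$; standard K3 pencil invariants give $\Gamma \cdot \lambda = 9$, $\Gamma \cdot \delta_{\mathrm{irr}} = 6g+18 = 66$, $\Gamma \cdot \psi_i = 1$ and $\Gamma \cdot \delta_{i:T} = 0$, whence $\Gamma \cdot K_{\mm_{8,12}} = 13 \cdot 9 - 2 \cdot 66 + 12 = -3 < 0$.

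However, a dimension count shows that, as $(S,\Lambda)$ vary, the K3-pencil family fills only the codimension-$5$ locus in $\mm_{8,12}$ where $h^0(C, K_C(-x_1-\cdots-x_{12})) \geq 1$, and in particular not an irreducible divisor, so Proposition~\ref{uniruled0} does not apply to this family as it stands. To repair this, I would enrich the construction in one of two ways: (i) imitating Proposition~\ref{extrem}, replace $S$ by a K3 surface of genus $9$ containing a $1$-nodal hyperplane section whose normalisation is the given genus $8$ curve, then split the node into two marked points via a degree-$2$ base change along the corresponding exceptional $2$-section (keeping $10$ of the pencil's $12$ remaining base points as the other markings); or (ii) imitating Theorem~\ref{gen5}, build the family from a pencil of plane octics with $13$ nodes (available for the general genus $8$ curve since $\rho(8,2,8)=2$), combined with node-splitting and a degree-$2$ base change to produce two extra marked points of higher $\psi$-value.

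The main obstacle is combinatorial: balancing the numbers of freely assigned marked points, pencil base points and node-split pairs so that simultaneously (a) $\Gamma \cdot K_{\mm_{8,12}} < 0$ is preserved, and (b) the family sweeps out at least an irreducible divisor $D \subset \mm_{8,12}$ on which one can verify $\Gamma \cdot D \geq 0$ (using for instance the Brill--Noether movability $\rho(8,5,12)=2$, or positivity of a boundary normal bundle). Naive attempts fail one of (a), (b): the straightforward K3-pencil with $12$ unsplit base markings achieves (a) but not (b), whereas a plane-octic pencil with $12$ base points as markings fills a divisor but gives $\Gamma \cdot K_{\mm_{8,12}} = 4>0$. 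Once a successful configuration is fixed, the numerical computations are routine in the style of Theorem~\ref{gen5}: one reads off $\chi(\mathcal{O})$ and $c_2$ of the resulting blown-up, base-changed surface, computes $\Gamma \cdot \lambda,\ \Gamma \cdot \delta_{\mathrm{irr}},\ \Gamma \cdot \psi_i,\ \Gamma \cdot \delta_{0:T}$, substitutes into the canonical-class formula, and applies Proposition~\ref{uniruled0}.
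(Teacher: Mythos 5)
Your write-up is a plan rather than a proof: you correctly diagnose that the naive K3 Lefschetz pencil sweeps out only a codimension-$5$ locus and that a pencil of $13$-nodal plane octics with $12$ marked base points has $\Gamma\cdot K_{\mm_{8,12}}=4>0$, but you then stop at ``once a successful configuration is fixed, the computations are routine'' without ever exhibiting such a configuration. That is exactly the missing step, and neither of your two proposed repairs is what makes the argument work. The key idea in the paper is to use the plane model of \emph{negative} Brill--Noether number: since $\rho(8,2,7)=-1$, the $7$-nodal plane septics sweep out the Brill--Noether divisor $\mm^2_{8,7}\subset\mm_8$, and a Lefschetz pencil of such septics (with $n$ marked unassigned base points) gives a covering curve $\Gamma_1$ of $D_1=\phi^*(2\,\mathfrak{bn}_8)$ with $\Gamma_1\cdot\lambda=8$, $\Gamma_1\cdot\delta_{\mathrm{irr}}=59$, $\Gamma_1\cdot\psi_i=1$, hence $\Gamma_1\cdot K_{\mm_{8,n}}=n-14<0$.

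However this pencil satisfies $\Gamma_1\cdot D_1=-1<0$, so Proposition~\ref{uniruled0} --- which requires $\Gamma\cdot D\geq 0$ --- is not applicable, and this is where your framework genuinely breaks down: any family sweeping out a \emph{rigid} divisor necessarily meets it negatively. The paper resolves this with the two-divisor criterion of Proposition~\ref{uniruled}, taking $D_2=\Delta_{\mathrm{irr}}$ together with a second covering curve $\Gamma_2$ obtained by gluing a moving point $y$ of a fixed general $[C,x_1,\ldots,x_{n+1}]\in\cM_{7,n+1}$ to $x_{n+1}$; one computes $\Gamma_2\cdot\delta_{\mathrm{irr}}=-14$, $\Gamma_2\cdot D_1=28$, $\Gamma_2\cdot K_{\mm_{8,n}}=n+25$, and the two determinant inequalities of Proposition~\ref{uniruled} then hold precisely for $n\leq 12$ (the second determinant is $29n-367$). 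Without identifying the divisor class that your septic pencil covers negatively, and without the second divisor--curve pair to eliminate that negativity, the argument cannot be completed along the lines you sketch; so as it stands the proposal has a genuine gap at its central step.
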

\begin{proof} We apply Proposition \ref{uniruled} when $D_1$ is a suitable multiple of the Brill-Noether divisor on $\mm_8$ consisting of curves with a $\mathfrak g^2_7$, that is,
 $$D_1\equiv 2\cdot \mathfrak{bn}_8:=\frac{2}{c_{8, 2, 7}} \mm_{8, 7}^2\equiv 22\lambda-3\delta_0-14\delta_1-24\delta_2-30\delta_3-32\delta_4\in \mbox{Pic}(\mm_8).$$
 We also set $D_2:=\Delta_{\mathrm{irr}}\in \mathrm{Eff}(\mm_{8, n})$. To construct a covering curve $\Gamma_1\subset D_1$, we lift to $\mm_{8, n}$ a Lefschetz pencil of $7$-nodal plane septics.  The fibration $f:\mathrm{Bl}_{28}(\PP^2)\rightarrow \PP^1$ obtained by blowing-up the $21+7$ base points of a general pencil of $7$-nodal plane septics, induces a covering curve $m:\PP^1\rightarrow \mm_8$ for the irreducible divisor $\mm_{8, 7}^2$. The numerical invariants of this pencil are
$$m^*(\lambda)=\chi(S, \OO_S)+g-1=8\ \mbox{ and } m^*(\delta_0)=c_2(S)+4(g-1)=59,$$
while  $m^*(\delta_i)=0$ for $i=1, \ldots, 4$. Moreover, for $n$ as above, $f$ carries $n$ sections given by the exceptional divisors corresponding to $n$ of the unassigned base points. If $\Gamma_1\subset \mm_{8, n}$ denotes the resulting, then
$$\Gamma_1\cdot \lambda=\phi_*(\Gamma_1) \cdot \lambda=8, \ \Gamma_1\cdot \delta_{\mathrm{irr}}=\phi_*(\Gamma_1) \cdot \delta_{\mathrm{irr}}=59,  \ \Gamma_1\cdot \psi_i=1 \mbox{ for } i=1, \ldots, n,$$
and $\Gamma_1 \cdot \delta_{i: T}=0$. It follows that $\Gamma_1\cdot D_1=-1, \ \Gamma_1\cdot K_{\mm_{8, n}}=n-14$ and $\Gamma_1\cdot D_2=59$.

We construct a covering curve $\Gamma_2\subset D_2$ and start with a general pointed curve $[C, x_1, \ldots, x_{n+1}]\in \mm_{7, n+1}$. We identify $x_{n+1}$ with a moving point $y\in C$, that is, take
$$\Gamma_2:=\bigl\{\bigl[\frac{C}{y\sim x_{n+1}},  x_1, \ldots, x_{n} \bigr]: y\in C\bigr\}\subset \mm_{8, n}.$$
It is easy to compute that
$\Gamma_2\cdot \lambda=0$, \  $\Gamma_2\cdot \delta_{\mathrm{irr}}=-2g(C)=-14$, \ $\Gamma_2\cdot \delta_{1: \emptyset}=1,\ \Gamma_2\cdot \psi_i=1,
\mbox{ for } i=1, \ldots, n,$
and $\Gamma_2\cdot \delta_{i: T}=0$ for $(i, T)\neq (1, \emptyset)$. Therefore $\Gamma_2\cdot D_1=28$ and $\Gamma_2\cdot K_{\mm_{8, n}}=25+n$. The assumptions of Proposition \ref{uniruled} are satisfied when $n\leq 12$.
\end{proof}
\begin{remark} The results of Theorem \ref{gen78} are almost optimal. The space $\mm_{8, 14}$ is of general type, \cite{Log}. Note that it was already known \cite{Log}, \cite{CF}, that
$\mm_{8, n}$ is unirational for $n\leq 11$, thus the improvement here is the case $\mm_{8, 12}$.
\end{remark}
\vskip 3pt
\begin{proposition}\label{gen9}
$\mm_{9, n}$ is uniruled for $n\leq 10$ \ (in fact unirational for $n\leq 9$).
\end{proposition}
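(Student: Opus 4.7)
The statement decomposes naturally into the unirationality assertion for $n \leq 9$ and the uniruledness claim for $n = 10$, and I would handle them separately.

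For unirationality when $n \leq 9$, my plan is to reproduce the Mukai-model argument used in the proof of Theorem \ref{symprod}. The symplectic Grassmannian $V_9 = SG(3,6) \subset \PP^{13}$ from the table preceding Proposition \ref{contraction} is a six-dimensional rational homogeneous Fano variety whose general $\PP^8$-linear sections are canonical curves of genus $9$. I introduce the incidence variety
$$\Sigma_n := \bigl\{((x_1, \ldots, x_n), \Lambda) \in V_9^n \times G(9, 14) : x_i \in \Lambda \text{ for } i=1, \ldots, n \bigr\},$$
which is a Grassmann bundle over $V_9^n$ for $n \leq 9$, hence rational. The natural map
$$\Sigma_n \longrightarrow \mm_{9, n}, \quad ((x_1, \ldots, x_n), \Lambda) \mapsto [V_9 \cap \Lambda, x_1, \ldots, x_n],$$
is dominant when $n \leq 9$, because a general canonical curve of genus $9$ is the intersection $V_9 \cap \Lambda$ for a unique $\Lambda$, and any $n \leq 9$ general points on it extend to such a datum.

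For uniruledness of $\mm_{9, 10}$, my plan is to apply Proposition \ref{uniruled} to a pair of covering families, after the pattern of the proofs of Theorems \ref{gen5} and \ref{gen78}. The first family arises from a Lefschetz pencil on a $K3$ surface: since for $g = 9$ a general $[C]\in \cM_9$ lies on a $K3$ surface $S \subset \PP^9$ of genus $9$, a generic Lefschetz pencil in $|\OO_S(C)|$ has $C^2 = 16$ base points, and blowing them up produces a fibration $\tilde S \to \PP^1$ equipped with $16$ sections. Marking ten of these base points yields a rational curve $R \subset \mm_{9, 10}$ with
$$R \cdot \lambda = 10, \ R \cdot \delta_{\mathrm{irr}} = 72, \ R \cdot \psi_i = 1 \ (1 \leq i \leq 10), \ R \cdot \delta_{i:T} = 0 \text{ else},$$
so that $R \cdot K_{\mm_{9, 10}} = 13 \cdot 10 - 2 \cdot 72 + 10 = -4 < 0$ by (\ref{canmgn}). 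The companion covering curve $R'$ of the boundary divisor $\Delta_{\mathrm{irr}} \subset \mm_{9, 10}$ is constructed by identifying two moving points on a general pointed curve of genus $8$, following verbatim the template from the proof of Theorem \ref{gen78}.

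The principal obstacle is that the $K3$-pencil family sweeps only the codimension-two sublocus $\{h^0(\omega_C(-x_1-\cdots-x_{10})) \geq 1\} \subset \mm_{9, 10}$, rather than a divisor, so $R$ as described above is not immediately movable in the sense of BDPP. To overcome this, I would refine the construction by marking one pair of points as the two sheets at a node of an auxiliary plane model of $C$; every curve of genus $9$ carries such a model, coming from a $\mathfrak g^2_8$ (the Brill--Noether locus is non-empty and finite as $\rho(9,2,8) = 0$, giving twelve-nodal plane octics). The resulting rational curves genuinely fill a divisor $D_1 \subset \mm_{9, 10}$, on which the determinantal inequalities of Proposition \ref{uniruled}, checked against $D_2 = \Delta_{\mathrm{irr}}$ and the second family $R'$, can be verified. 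The final step is a bookkeeping calculation combining (\ref{canmgn}), the formula for $\dd_g$ adapted to $n = g+1$, and the class of the appropriate Brill--Noether divisor on $\mm_9$ proportional to $\mathfrak{bn}_9$, to confirm the two sign conditions of Proposition \ref{uniruled}.
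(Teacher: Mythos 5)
Your unirationality argument for $n\leq 9$ is exactly the paper's (the Mukai model $V_9=SG(3,6)$ and the Grassmann bundle $\Sigma_n$, as in the proof of Theorem \ref{symprod}), so that part is fine. The problem is the uniruledness of $\mm_{9,10}$. You correctly diagnose that the Lefschetz pencil on a $K3$ only sweeps the codimension-two locus $h^0(K_C(-x_1-\cdots-x_{10}))\geq 1$, but your proposed repair does not produce a divisor either. Since $\rho(9,2,8)=0$, a general curve of genus $9$ carries only \emph{finitely many} $\mathfrak g^2_8$'s, hence finitely many $12$-nodal plane octic models; requiring two of the marked points to be the preimages of a node of one of these models constrains the pair $(x_1,x_2)$ to a finite subset of $C\times C$, which is again a codimension-two condition on $\mm_{9,10}$. (The analogous construction works in genus $7$ precisely because $\rho(7,2,7)=1$ gives a one-parameter family of plane models, which is why $\overline{\mathfrak{Node}}_7$ is a divisor; for $g=9$ there is no $\mathfrak g^2_d$ with $\rho=1$ at all, as $3d-24=1$ has no integer solution.) On top of this, the intersection numbers of the second family and the two determinantal inequalities of Proposition \ref{uniruled} — which are the actual content of the argument — are left entirely unverified, so even if a suitable $D_1$ existed the proof would still be incomplete.

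The paper's route is simpler and avoids the two-divisor criterion altogether: it applies Proposition \ref{uniruled0} with the single irreducible divisor $D=\phi^*(\mm_{9,5}^1)$, the pull-back of the $5$-gonal locus. A general $5$-gonal curve of genus $9$ has a plane model of degree $8$ with a triple point (since $A\otimes\OO_C(D)\in W^2_8(C)$ for suitable $D\in C_3$), so a pencil of such octics in $\mathrm{Bl}_{p_0,\ldots,p_9}(\PP^2)$ through $10$ general marked points gives a covering curve $\Gamma$ of $D$ with $\Gamma\cdot\lambda=9$, $\Gamma\cdot\delta_{\mathrm{irr}}=64$, $\Gamma\cdot\psi_i=1$, whence $\Gamma\cdot K_{\mm_{9,10}}=-1<0$ while $\Gamma\cdot D>0$. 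The key observation you are missing is that Proposition \ref{uniruled0} only requires $\Gamma\cdot D\geq 0$: the covering curve need not meet its divisor negatively, so one is free to choose $D$ to be a Brill--Noether-type divisor that $\Gamma$ sweeps out and intersects positively, rather than hunting for a rigid divisor with a negatively-intersecting covering family.
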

\begin{proof} We apply Proposition \ref{uniruled0}, when $D:=\phi^*(\mm_{9, 5}^1)$ is the pull-back of the $5$-gonal locus inside $\mm_9$. If $[C]\in \mm_{9, 5}^1$ is a general $5$-gonal curve and $A\in W^1_5(C)$ is the (unique) $\mathfrak g^1_5$, then there exists an effective divisor $D\in C_3$, such that $h^0(C, A\otimes \OO_C(D))\geq 3$, cf. \cite{AC2}. In particular, $A\otimes \OO_C(D)\in W^2_8(C)$ induces a plane model of $C$ having a $3$-fold point, such that $|A|$ can be retrieved by projecting from this point.

To obtain a covering curve for $D$, we start with general points $p_0, p_1, \ldots, p_9\in \PP^2$ and consider the surface $\epsilon:S:=\mathrm{Bl}_{p_0, \ldots, p_9}(\PP^2) \rightarrow \PP^2$ together with the line bundle $\mathcal{L}:=\epsilon^*\OO_{\PP^2}(8)\otimes \OO_S(-3E_{p_0}-\sum_{i=1}^9{E_{p_i}})\in \mathrm{Pic}(S)$. Note that $\mbox{dim } |\L|=11$.  We fix $10$ general points $x_1, \ldots, x_{10}\in S$, hence the pencil $|\mathcal{I}_{\{x_1, \ldots, x_{10}\}/S}\otimes \L|$ induces a curve $\Gamma\subset \mm_{9, 10}$. Standard calculations yield that $\Gamma\cdot \lambda=9$, $\Gamma\cdot \delta_{\mathrm{irr}}=64$ and $\Gamma\cdot \psi_i=1$ for $i=1, \ldots, 10$. Therefore $\Gamma\cdot K_{\mm_{9, 10}}=-1$, while $\Gamma\cdot \phi^*(\mm_{9, 5}^1)>0$. Since $\Gamma\subset \phi^*(\mm_{9, 5}^1)$ is a covering curve,  this finishes the proof.
\end{proof}

Finally, we turn to the case of genus $7$.
In order to establish the uniruledness of $\mm_{7, n}$, we consider the following  effective divisors on $\mm_{7, n}$:
$$\cD_1:=\{[C, x_1, \ldots, x_n]\in \cM_{7, n}: \exists L\in W^2_7(C)\mbox{  with } h^0(C, L(-x_1-x_2))\geq 1\}, $$
and $D_2:=\frac{3}{2c_{7, 1, 4}} \phi^*(\mm_{7, 4}^1)\equiv 15\lambda-2\delta_0-9\delta_1-15\delta_2-18\delta_3 \in \mathrm{Pic}(\mm_7)$
is (a rational multiple of) the divisor of $4$-gonal curves on $\mm_7$. Before computing the class $[\dd_1]$,  we need a calculation, which may be of independent interest:

\begin{proposition}\label{divm72}
Let $g\equiv 1  \ \mathrm{ mod } \ 3$ be a fixed integer and set $d:=(2g+7)/3$, so that the Brill-Noether number $\rho(g, 2, d)=1$. One considers the effective divisor of nodes of plane curves
$$\mathfrak{Node}_g:=\{[C, x, y]\in \cM_{g, 2}: \exists L\in W^2_d(C) \mbox{ such that } \ h^0(C, L(-x-y))\geq 2\}.$$
The class of its closure in $\mm_{g, 2}$ is given by the formula:
$$\overline{\mathfrak{Node}}_g\equiv c_g\Bigl((g+4)\lambda+\frac{g+2}{6}(\psi_1+\psi_2)-\frac{g+2}{6}\delta_{\mathrm{irr}}-g\delta_{0: \{1, 2\}}-\cdots\ \Bigr)\in \mathrm{Pic}(\mm_{g, 2}),$$
$$\mbox{ where }\ \  c_g:=\frac{24(g-2)!}{(g-d+5)!\ (g-d+3)!\ (g-d+1)!}.$$
\end{proposition}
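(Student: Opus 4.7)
The plan is to realise $\overline{\mathfrak{Node}}_g$ as the push-forward to $\mm_{g, 2}$ of a Thom-Porteous degeneracy cycle on the universal parameter space of $\mathfrak g^2_d$'s, and to extract the coefficients via Grothendieck-Riemann-Roch together with a few test curves, in the spirit of \cite{EH2} and the standard Brill-Noether divisor calculations.

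Let $\sigma:\overline{\mathfrak G}^2_d\to \mm_{g, 2}$ denote the Eisenbud-Harris parameter space of data $\bigl((C, x, y), L\bigr)$ with $L\in W^2_d(C)$. On its universal curve $\pi$ there is a tautological line bundle $\mathcal{L}$ of relative degree $d$, two sections $\sigma_1,\sigma_2$, and a natural evaluation morphism
$$
\mathrm{ev}:\mathcal E:=\pi_*\mathcal{L}\longrightarrow \mathcal F:=\mathcal{L}|_{\sigma_1}\oplus \mathcal{L}|_{\sigma_2}
$$
between sheaves of generic ranks $3$ and $2$. Set-theoretically, $\sigma^{-1}(\overline{\mathfrak{Node}}_g)$ coincides with the locus where $\mathrm{rk}(\mathrm{ev})\le 1$, a condition of expected codimension $(3-1)(2-1)=2$. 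Since $\sigma$ is generically of relative dimension $\rho(g, 2, d)=1$, the push-forward of this codimension-$2$ cycle under $\sigma$ is a divisor class on $\mm_{g, 2}$, and I would identify it (up to the scalar $c_g$) with $[\overline{\mathfrak{Node}}_g]$.

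The class computation proceeds in the standard way. The Chern classes of $\mathcal F$ are immediate from the tautological identities for $\sigma_i^* c_1(\mathcal{L})$. Those of $\mathcal E$ are produced by applying Grothendieck-Riemann-Roch to $\pi$, exactly as in the classical Mumford computation revisited in \cite{EH2}: modulo the contribution of $R^1\pi_*\mathcal{L}$, the class $c_1(\mathcal E)$ expands as a linear combination of $\lambda$, $\kappa_1$, the classes $\sigma_i^*c_1(\mathcal{L})$ and the pulled-back boundary classes. Thom-Porteous then expresses $[\sigma^{-1}(\overline{\mathfrak{Node}}_g)]$ as the Schur polynomial $s_{(1, 1)}\bigl(c_\bullet(\mathcal F-\mathcal E)\bigr)$, and the push-forward $\sigma_*$ converts the fibrewise classes $\sigma_i^*c_1(\mathcal{L})$ into their enumerative contributions on $\mm_{g, 2}$. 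The numerical factor $c_g$ appears as the $\rho=1$ Castelnuovo-type count, verified by Schubert calculus on the generic fibre of $\sigma$, which is a projective curve in $W^2_d(C)$.

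To confirm the four coefficients displayed in the statement, I would intersect the resulting class with two standard test curves: (a) a Lefschetz pencil of plane curves of degree $d$ with $\delta=\binom{d-1}{2}-g$ nodes in general position, lifted to $\mm_{g, 2}$ by using two of the base points as marked sections, which pins down the coefficients of $\lambda$ and $\delta_{\mathrm{irr}}$; and (b) the test curve $x\mapsto [C, x, y_0]$ with $[C]\in \cM_g$ general and $y_0\in C$ fixed, which isolates the coefficients of $\psi_i$ and $\delta_{0:\{1, 2\}}$ by counting the $L\in W^2_d(C)$ for which $y_0$ appears as a branch of a node of the plane model $\phi_L(C)$. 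The delicate step is the analysis of $\overline{\mathfrak G}^2_d$ over the boundary divisors $\Delta_{\mathrm{irr}}$ and $\Delta_{0:\{1, 2\}}$: the Eisenbud-Harris scheme of limit linear series typically acquires spurious components over these loci, and the resulting correction terms are precisely what produce the non-naive coefficients $-\tfrac{c_g(g+2)}{6}\delta_{\mathrm{irr}}$ and $-c_g g\,\delta_{0:\{1, 2\}}$, rather than the bare Porteous contributions; this boundary book-keeping is where an honest execution of the calculation is expected to spend the most work.
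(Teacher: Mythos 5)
Your overall strategy --- realising $\overline{\mathfrak{Node}}_g$ as the push-forward of the rank-$\leq 1$ Porteous locus of the evaluation map $\pi_*\mathcal{L}\to \mathcal{L}|_{\sigma_1}\oplus\mathcal{L}|_{\sigma_2}$ on the universal space of $\mathfrak g^2_d$'s --- is a legitimate route, but it is genuinely different from the paper's, and the step you explicitly defer (the ``delicate boundary book-keeping'' over $\Delta_{\mathrm{irr}}$ and $\Delta_{0:\{1,2\}}$) is exactly where all of the stated coefficients except the $\psi_i$-coefficient would have to be produced; as written, the proposal does not yet establish the formula. The paper avoids the limit-linear-series analysis over the boundary altogether, by two devices. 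First, pushing $\overline{\mathfrak{Node}}_g\cdot \delta_{0:\{1,2\}}$ down to $\mm_{g,1}$ identifies it with the cusp divisor $\overline{\mathfrak{Cu}}_g$, and attaching a fixed elliptic tail gives $\overline{\mathfrak{Cu}}_g=j^*(\mm_{g+1,d}^2)$ for $j:\mm_{g,1}\to \mm_{g+1}$; since $\rho(g+1,2,d)=-1$, the class of the Brill--Noether divisor $\mm_{g+1,d}^2$ is already known from \cite{EH2}, and the standard pull-back and push-forward identities then deliver the $\lambda$-, $\delta_{\mathrm{irr}}$- and $\delta_{0:\{1,2\}}$-coefficients of $\overline{\mathfrak{Node}}_g$ at once, together with the constant $c_g$ (which your plan leaves to an unexecuted Schubert-calculus count on the fibres). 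Second, the only genuinely new computation is the $\psi_i$-coefficient, obtained from the single test curve $\{[C,q,y]:y\in C\}$ by a Harris--Tu degeneracy-locus count; this is the one place where your fibrewise evaluation map is actually used, but only over a fixed general curve, where the parameter space of $\mathfrak g^2_d$'s is harmless.

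One further concrete caveat concerns your test curve (a): a Lefschetz pencil of $\delta$-nodal plane curves of degree $d$ with two base points as markings has nonzero intersection with $\lambda$, $\delta_{\mathrm{irr}}$, $\psi_1$ and $\psi_2$ simultaneously, so it cannot ``pin down'' the $\lambda$- and $\delta_{\mathrm{irr}}$-coefficients on its own; moreover its intersection number with $\overline{\mathfrak{Node}}_g$ is itself a nontrivial enumerative problem, since for each fibre $C_t$ one must account for the entire one-dimensional family $W^2_d(C_t)$ and not merely the planar $\mathfrak g^2_d$. To complete your route you would either have to carry out the Eisenbud--Harris boundary analysis in earnest, or substitute the elliptic-tail/cusp-divisor argument of the paper for the $\lambda$, $\delta_{\mathrm{irr}}$ and $\delta_{0:\{1,2\}}$ coefficients and retain your degeneracy-locus computation only for the $\psi_i$-term.
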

\begin{proof} We denote by $\phi_1:\mm_{g, 2}\rightarrow \mm_{g, 1}$ the morphism forgetting the second marked point. The divisor $\overline{\mathfrak{Cu}}_g:=(\phi_1)_*(\overline{\mathfrak{Node}}_g\cdot \delta_{0: \{1, 2\}})$ coincides with the cusp locus in $\mm_{g, 1}$, that is,
the locus of pointed curves $[C, x]\in \cM_{g, 1}$, such that there exists $L\in W^2_d(C)$ with $h^0(C, L(-2x))\geq 2$.

In order to compute its class, we fix a general elliptic curve $[E, x]\in \mm_{1, 1}$ and consider the map $j:\mm_{g, 1}\rightarrow \mm_{g+1}$,  given by $j([C, x]):=[C\cup_x
E]$. Then $$\overline{\mathfrak{Cu}}_g=j^*(\mm_{g+1, d}^2),$$ where $\mm_{g+1, d}^2$ is the Brill-Noether divisor on $\mm_{g+1}$ consisting of curves with a
$\mathfrak g^2_d$. Since the class $[\mm_{g+1, d}^2]\in \mathrm{Pic}(\mm_{g+1})$ is known, cf. \cite{EH2}, and  $j^*(\lambda)=\lambda$, \ $j^*(\delta_{\mathrm{irr}})=\delta_{\mathrm{irr}}$, $j^*(\delta_1)=-\psi+\delta_{g-1: 1}$, we find the following expression
$$\overline{\mathfrak{Cu}}_g\equiv c_g \Bigl((g+4)\lambda+g\psi-\frac{g+2}{6}\delta_{\mathrm{irr}}-\sum_{i=1}^{g-1} (i+1)(g-i)\delta_{i: 1} \Bigr)\in \mathrm{Pic}(\mm_{g, 1}).$$
Using the formulas $(\phi_1)_*(\lambda\cdot \delta_{0: \{1, 2\}})=\lambda$,\ $(\phi_1)_*(\delta_{0: \{1, 2\}}^2)=-\psi$, $(\phi_1)_*(\delta_{\mathrm{irr}}\cdot \delta_{0: \{1, 2\}})=\delta_{\mathrm{irr}}$ and $(\phi_1)_*(\psi_i\cdot \delta_{0: \{1, 2\}})=0$ for $i=1, 2$, one finds that the
$\delta_{0: \{1, 2\}}$-coefficient of $\overline{\mathfrak{Node}}_g$ equals the $\psi_1$-coefficient of $\overline{\mathfrak{Cu}}_g$, while the
$\lambda, \delta_{\mathrm{irr}}$-coefficients coincide.

One still has to determine the $\psi_1$-coefficient in $[\overline{\mathfrak{Node}}_g]$. To this end, we fix a general point $[C, q]\in
\cM_{g, 2}$ and consider the test curve $C_2:=\{[C, q, y]: y\in C\}\subset \mm_{g, 2}$. Then, $C_2\cdot \psi_1=1$, $C_2\cdot \psi_2=2g-1$ and obviously $C_2\cdot \delta_{0: \{1, 2\}}=1$. On the other hand, $C_2\cdot \overline{\mathfrak{Node}}_g$ equals the number of points $y\in C$, such that for some (necessarily complete and base point free) $L\in W^2_d(C)$, the morphism $$\chi(L, y): L_{|y+q}^{\vee}\rightarrow H^0(C, L)^{\vee} $$ fails to be injective. The map $\chi(L, y)$ globalizes to a morphism of vector bundles over $C\times W^2_d(C)$, and the number in question is the Chern number of the top degeneracy locus of $\chi$ and is computed using \cite{HT}. We omit the details.
\end{proof}

\begin{proposition}\label{dinm7n} The class of the closure of $\cD_1$ in $\mm_{7, n}$ is given by the formula
$$\dd_1\equiv 44\lambda+6(\psi_1+\psi_2)-6\delta_{\mathrm{irr}}-28\delta_{0: \{1, 2\}}-6\sum_{j=3}^n (\delta_{0:\{1, j\}}+\delta_{0:\{2, j\}})-\cdots\in \mathrm{Pic}(\mm_{7, n}).$$
\end{proposition}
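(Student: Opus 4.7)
The plan is to realize $\dd_1$ as a pull-back from $\mm_{7, 2}$ and then invoke Proposition \ref{divm72}. Let $\pi_{12}\colon \mm_{7, n} \to \mm_{7, 2}$ be the morphism forgetting $x_3, \ldots, x_n$. Since the condition defining $\cD_1$ only uses the triple $(C, x_1, x_2)$, one has $\cD_1 = \pi_{12}^{-1}(\mathfrak{Node}_7)$ set-theoretically; a generic smoothness check along the divisor verifies that this scheme-theoretic pull-back is reduced, so that $\dd_1 = \pi_{12}^{*}(\overline{\mathfrak{Node}}_7)$ in the rational Picard group.

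Next I would specialize Proposition \ref{divm72} to $g = d = 7$ (which indeed satisfies $7 \equiv 1 \bmod 3$ and $7 = (2\cdot 7 + 7)/3$). The constant evaluates to
$$c_7 = \frac{24 \cdot 5!}{5!\,3!\,1!} = 4,$$
giving
$$\overline{\mathfrak{Node}}_7 \equiv 44\lambda + 6(\psi_1 + \psi_2) - 6\delta_{\mathrm{irr}} - 28\delta_{0:\{1, 2\}} - \cdots \in \mathrm{Pic}(\mm_{7, 2}).$$
Pulling back along $\pi_{12}$: the classes $\lambda$ and $\delta_{\mathrm{irr}}$ are preserved, $\delta_{0:\{1,2\}}$ pulls back to $\delta_{0:\{1,2\}}$ modulo strata $\delta_{0: T}$ with $|T| \geq 3$ (which are absorbed into the trailing $\cdots$), and iterating the standard one-step formula $f^{*}(\psi_i) = \psi_i - \delta_{0:\{i, \star\}}$ for the forgetful morphism yields
$$\pi_{12}^{*}(\psi_i) \equiv \psi_i - \sum_{j = 3}^{n} \delta_{0:\{i, j\}} - (\text{higher boundary}), \quad i \in \{1, 2\}.$$
The $6\psi_i$ summand in $\overline{\mathfrak{Node}}_7$ therefore produces the contribution $-6 \sum_{j=3}^{n} \delta_{0:\{i, j\}}$ to $\dd_1$, and summing over $i \in \{1, 2\}$ gives precisely $-6\sum_{j=3}^{n}(\delta_{0:\{1, j\}} + \delta_{0:\{2, j\}})$, matching the formula in the statement.

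The real technical weight is carried entirely by Proposition \ref{divm72}: its $\lambda$-, $\delta_{\mathrm{irr}}$- and $\delta_{0:\{1,2\}}$-coefficients are obtained by reducing the $\delta_{0:\{1,2\}}$-restriction to the cusp divisor $\overline{\mathfrak{Cu}}_g \subset \mm_{g, 1}$ and pulling back the Eisenbud--Harris Brill--Noether class on $\mm_{g+1}$ along the attaching map $j\colon \mm_{g, 1} \to \mm_{g+1}$, while the $\psi_i$-coefficient is pinned down by a test-curve $\{[C, q, y]: y \in C\}$ and a Harris--Tu degeneracy-locus count on $C \times W^2_d(C)$. Granted that input, the deduction of $\dd_1$ presented here is a purely formal pull-back computation.
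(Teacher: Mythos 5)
Your proposal is correct and is essentially the paper's own argument: the authors likewise write $\dd_1=\phi_{12}^*(\overline{\mathfrak{Node}}_7)$ for the map retaining the first two marked points and then combine Proposition \ref{divm72} (with $g=d=7$, $c_7=4$) with the standard pull-back formulas for generators of $\mathrm{Pic}(\mm_{7,2})$. Your arithmetic ($44\lambda+6(\psi_1+\psi_2)-6\delta_{\mathrm{irr}}-28\delta_{0:\{1,2\}}$) and the bookkeeping of the $\delta_{0:\{i,j\}}$ corrections from $\phi_{12}^*(\psi_i)$ both check out.
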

\begin{proof} We denote by $\phi_{12}:\mm_{7, n}\rightarrow \mm_{7, 2}$ the morphism retaining the first two marked points. Then $\dd_1=\phi_{12}^*(\overline{\mathfrak{Node}}_7)$, and the conclusion follows from  Proposition \ref{divm72} using the pull-back formulas for generators of $\mbox{Pic}(\mm_{7, 2})$, see e.g. \cite{Log} Theorem 2.3.
\end{proof}

\begin{theorem}
$\mm_{7, n}$ is uniruled for $n\leq 13$.
\end{theorem}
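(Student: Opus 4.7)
The plan is to invoke Proposition \ref{uniruled} with the two effective divisors $D_1 := \cD_1$ (whose class is given by Proposition \ref{dinm7n}) and $D_2 := \frac{3}{2c_{7,1,4}}\phi^*(\mm_{7,4}^1)$, and to exhibit covering curves $\Gamma_1 \subset D_1$ and $\Gamma_2 \subset D_2$ whose numerical invariants satisfy the two required determinantal inequalities for every $n \leq 13$.

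For $\Gamma_1$ I will imitate the construction in the proof of Theorem \ref{gen5}. Start from a general pencil in the linear system of plane septics having $8$ assigned nodes at fixed general points $p_1, \ldots, p_8 \in \PP^2$, whose normalizations are smooth genus $7$ curves that carry a $\mathfrak g^2_7$ exhibiting the nodes. Blowing up $\PP^2$ at these $8$ points together with the $17$ unassigned base points, and then making a degree-$2$ base change along the $2$-section $E_{p_1}$ to split it into sections $\widetilde{E}_x,\widetilde{E}_y$ playing the role of $x_1,x_2$, I obtain a family of stable curves in $\cD_1$ with $n-2$ further sections from the remaining unassigned base points. The intersection numbers $\lambda = 14$, $\delta_{\mathrm{irr}} = 104$, $\psi_1 = \psi_2 = 5$, $\psi_i = 2$ for $3 \leq i \leq n$, $\delta_{0:\{1,2\}} = 2$, and zero against the remaining standard generators are computed exactly as in Theorem \ref{gen5}; combined with Proposition \ref{dinm7n} they give $\Gamma_1 \cdot D_1 = -4$. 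For $\Gamma_2$ I will use an analogous pencil of $3$-nodal plane sextics, whose normalizations are $4$-gonal via projection from any of the three nodes, together with the same kind of base-change splitting to provide enough sections; this family lies in $\phi^*(\mm_{7,4}^1)$, and a direct computation against $D_2 \equiv 15\phi^*\lambda - 2\phi^*\delta_0 - 9\phi^*\delta_1 - 15\phi^*\delta_2 - 18\phi^*\delta_3$ gives $\Gamma_2 \cdot D_2 < 0$.

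With both covering families in place, the remaining task is to check the two determinantal inequalities of Proposition \ref{uniruled}. The first, involving $K_{\mm_{7,n}}$ and $D_1$, is evaluated by combining (\ref{canmgn}) with the invariants of $\Gamma_1$ and $\Gamma_2$, and it is here that the numerical bound $n \leq 13$ emerges. The main obstacle will be the second inequality, $\det(\Gamma_i \cdot D_j) \leq 0$. The delicate point is that a $4$-gonal curve $C$ of genus $7$ automatically carries a $\mathfrak g^3_8 = K_C - \mathfrak g^1_4$ by Serre duality, so for a suitable base point $p \in C$ of the residual $\mathfrak g^1_6$, projection from $p$ yields a $\mathfrak g^2_7$ through which the marked points $x_1,x_2$ become coincident on the plane model; consequently $\phi^*(\mm_{7,4}^1)$ meets $\cD_1$ in a codimension-one locus, and $\Gamma_2 \cdot D_1$ may itself be negative. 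To overcome this, it may be necessary to refine $\Gamma_2$ by replacing the sextic pencil with, say, a pencil of bidegree $(4,4)$-curves with two nodes on $\PP^1 \times \PP^1$ whose image avoids the sub-locus $\mm_{7,6}^2 \subset \mm_{7,4}^1$, so that the cross-intersections $\Gamma_1 \cdot D_2$ and $\Gamma_2 \cdot D_1$ have the magnitudes needed to make the $(2\times 2)$ determinant non-positive throughout the range $n \leq 13$.
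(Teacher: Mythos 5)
Your overall strategy coincides with the paper's: apply Proposition \ref{uniruled} to $D_1=\dd_1$ and $D_2$ a multiple of $\phi^*(\mm_{7,4}^1)$, with covering curves built from pencils of plane models. Your $\Gamma_1$ (a degree-$2$ base change of a pencil of $8$-nodal septics, splitting the $2$-section over a node into the two sections $x_1,x_2$) is a legitimate variant of the paper's construction and its numerics ($\Gamma_1\cdot\lambda=14$, $\Gamma_1\cdot\delta_{\mathrm{irr}}=104$, $\psi_1=\psi_2=5$, $\delta_{0:\{1,2\}}=2$, hence $\Gamma_1\cdot\dd_1=-4$) check out against Proposition \ref{dinm7n}.

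The genuine gap is $\Gamma_2$. A plane sextic with $3$ nodes has geometric genus $7$ and carries a $\mathfrak g^2_6$; since $\rho(7,2,6)=-2$, the locus $\cM^2_{7,6}$ of such curves has codimension $2$ in $\cM_7$ (a dimension count gives $27-3-8=16=\dim\cM_7-2$). So a pencil of $3$-nodal sextics sweeps out only a codimension-$2$ sublocus of the $4$-gonal divisor $\cM^1_{7,4}$ (which has dimension $17$, as $\rho(7,1,4)=-1$), and is \emph{not} a covering curve for $D_2$; Proposition \ref{uniruled} cannot be applied with it. Your fallback of $2$-nodal $(4,4)$-curves on a quadric suffers from the same defect: those are exactly the genus-$7$ curves carrying \emph{two} $\mathfrak g^1_4$'s, again a codimension-$2$ condition ($24-2-6=16$). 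The model that actually covers $\cM^1_{7,4}$ is the one the paper uses: for a general $4$-gonal $[C]$ with $A\in W^1_4(C)$ there is $D\in C_3$ with $A\otimes\OO_C(D)\in W^2_7(C)$, i.e.\ $C$ is a plane septic with an ordinary triple point and $5$ nodes, the $\mathfrak g^1_4$ being recovered by projection from the triple point; a pencil of such septics through $13$ general points gives $\Gamma_2$ with $\Gamma_2\cdot\lambda=7$, $\Gamma_2\cdot\delta_{\mathrm{irr}}=53$, $\Gamma_2\cdot\psi_i=1$, whence $\Gamma_2\cdot D_2=-1$, $\Gamma_2\cdot\dd_1=2$, $\Gamma_2\cdot K_{\mm_{7,13}}=-2$, and both determinants of Proposition \ref{uniruled} are then non-positive (the first one vanishes). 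Two smaller points: you never actually carry out the determinant verification from which ``$n\leq 13$ emerges,'' and your concern that $\Gamma_2\cdot D_1$ might be negative is beside the point — Proposition \ref{uniruled} imposes no sign condition on the cross terms, only the two determinantal inequalities.
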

\begin{proof}
We start by constructing a covering curve for $\dd_1$. Choose general points $p_1, \ldots, p_8$, $x_3, \ldots, x_{12}\in \PP^2$, and a general line $l\subset \PP^2$. Then consider the pencil of plane septics of geometric genus $7$ passing through $x_3, \ldots, x_{12}$ and having nodes at $p_1, \ldots, p_8$. Blowing-up the nodes
as well as the base points of the pencil, we obtain a fibration $f:S\rightarrow \PP^1$, where $S:=\mathrm{Bl}_{25}(\PP^2)$. We observe that $f$ has sections $\{E_{x_i}\}_{i=3, \ldots, 12}$, given by the respective exceptional divisors, a $2$-section given by $E_{p_1}$ and a $7$-section induced by the proper transform of $l$. We make base changes of order $2$ and $7$ respectively, to arrive at the $1$-cycle
$\Gamma_1:=\bigl\{[C_t, \ x_1(t), \ldots, x_{13}(t)]:t\in \PP^1\bigr\}\subset \mm_{7, 13}$,
where $x_1(t)$ and $x_2(t)$ map to the fixed node $p_1\in \PP^2$, whereas the image of $x_{13}(t)$ lies on the line $l$. One finds that:
$$
\Gamma_1 \cdot \lambda= 14\cdot g=98, \ \Gamma_1 \cdot \psi_1=\Gamma_1 \cdot \psi_2=35, \ \Gamma_1 \cdot \psi_{3}=\cdots =\Gamma_1 \cdot \psi_{{12}}=14,\ \Gamma_1\cdot \psi_{13}=22.
$$
Furthermore, $\Gamma_1 \cdot \delta_{0: \{1, 2\}}=14,\  \Gamma_1\cdot \delta_{\mathrm{irr}}=14\cdot 52=728$, and finally $\Gamma_1\cdot \delta_{j: T}=0$ for all pairs $(j, T)\neq \bigl(0, \{1, 2\}\bigr)$. Clearly  $\Gamma_1$ is a covering curve for $\dd_1$.
\vskip 3pt

Next, we construct a covering curve for $D_2$ and use that if $[C]\in \cM_{7, 4}^1$ and $A\in W^1_4(C)$ is the corresponding pencil, then there exists a divisor $D\in C_3$ such that $A\otimes \OO_C(D)\in W^2_7(C)$. One fixes general points $p, \{p_i\}_{i=1}^5$, $\{x_j\}_{j=1}^{13} \in \PP^2$ and considers the pencil of genus $7$ septics with a $3$-fold point at $p$, nodes at
$p_1, \ldots, p_5$ and passing through $x_1, \ldots, x_{13}$. This induces a covering curve $\Gamma_2\subset \phi^*(\mm_{7, 4}^1)$ with the following  invariants:
$$\Gamma_2\cdot \lambda=7, \ \Gamma_2\cdot \delta_{\mathrm{irr}}=53, \ \Gamma_2\cdot \psi_i=1 \mbox{ for } i=1, \ldots, 13, \ \mbox{ and } \  \Gamma_2\cdot  \delta_{j: T}=0\ \ \mbox{ for all }\  (j, T).$$
Thus,  $\Gamma_1\cdot \dd_1=-28, \  \Gamma_2\cdot D_2=-1, \ \Gamma_1\cdot D_2=14,  \Gamma_2\cdot \dd_1=2$,  as well as
$\Gamma_1\cdot K_{\mm_{7, 13}}=22$ and $\Gamma_2\cdot K_{\mm_{7, 13}}=-2$. The assumptions of Proposition \ref{uniruled} are thus fulfilled.
\end{proof}


\begin{thebibliography}{EMS}
\bibitem[AC1]{AC1} E. Arbarello and M. Cornalba, {\em{The Picard groups of the moduli spaces of curves}}, Topology \textbf{26} (1987), 153-171.
\bibitem[AC2]{AC2} E. Arbarello and M. Cornalba, {\em{Footnotes to a paper of Beniamino Segre}}, Math. Annalen \textbf{256} (1981),  341-362.
\bibitem[BDPP]{BDPP} S. Boucksom, J.P. Demailly, M. P\u{a}un and T. Peternell, {\em{The pseudo-effective cone of a compact K\"ahler manifold and varieties of negative Kodaira dimension}}, arXiv:math/0405285.
\bibitem[BFV]{BFV} G. Bini, C. Fontanari and F. Viviani, {\em{On the birational geometry of the universal Picard variety}}, arXiv:1006.0722.
\bibitem[C]{C} L. Caporaso, {\em{A compactification of the universal Picard variety over the moduli space of stable curves}}, Journal of the American Math. Society \textbf{7} (1994), 589-660.
\bibitem[CHM]{CHM} L. Caporaso, J. Harris and B. Mazur, {\em{Uniformity of rational points}}, Journal of the American Math. Society \textbf{10} (1997), 1-35.
\bibitem[CF]{CF} G. Casnati and C. Fontanari, {\em{On the rationality of the moduli space of pointed curves}}, Journal of the London Math. Society
\textbf{75} (2007), 582-596.
\bibitem[CR]{CR} M.C. Chang and Z. Ran, {\em{Unirationality of the moduli space of curves of genus $11, 13$ (and $12$)}}, Inventiones Math. \textbf{76} (1984), 41-54.
\bibitem[EH1]{EH1} D. Eisenbud and J. Harris, {\em{Limit linear series: Basic theory}}, Inventiones Math. \textbf{85} (1986), 337-371.
\bibitem[EH2]{EH2} D. Eisenbud and J. Harris, \ {\em{The Kodaira dimension  of the
moduli space of curves of genus $23$ }} Inventiones Math.
\textbf{90} (1987), 359--387.
\bibitem[F1]{F1} G. Farkas, {\em{Koszul divisors on moduli spaces of
curves}}, American Journal of Math. \textbf{131} (2009),
819-869.
\bibitem[F2]{F2} G. Farkas, {\em{Birational aspects of the geometry of $\mm_g$}}, arXiv:0810.0702, Surveys in Differential Geometry Vol. 14 (2010), 57-110.
\bibitem[FL]{FL} G. Farkas and K. Ludwig, {\em{The Kodaira dimension of the moduli
space of Prym varieties}}, Journal of the European Mathematical Society \textbf{12} (2010), 755-795.
\bibitem[FP]{FP} G. Farkas and M. Popa, {\em{Effective divisors on
$\mm_g$, curves on $K3$ surfaces and the Slope Conjecture}}, Journal
of Algebraic Geometry \textbf{14} (2005), 151-174.
\bibitem[FV1]{FV} G. Farkas and A. Verra, {\em{The geometry of the moduli space of odd spin curves}}, arXiv:1004.0278.
\bibitem[FV2]{FV2} G. Farkas and A. Verra, {\em{The intermediate type of certain moduli spaces of curves}}, arXiv:0910.3905.
\bibitem[FKPS]{FKPS} F. Flamini, A. Knutsen, G. Pacienza and E. Sernesi, {\em{Nodal curves with general moduli on $K3$ surfaces}},  Comm. Algebra  \textbf{36}  (2008), 3955--3971.
\bibitem[HM]{HM} J. Harris and D. Mumford, {\em{On the Kodaira
dimension of $\mm_g$}}, Inventiones Math. \textbf{67} (1982), 23-88.
\bibitem[HT]{HT} J. Harris and L. Tu, {\em{Chern numbers of kernel
and cokernel bundles}}, Inventiones Math. \textbf{75} (1984),
467-475.
\bibitem[K]{K} A. Kouvidakis, {\em{On some results of Morita and their application to questions of ampleness}}, Math. Zeitschrift \textbf{241} (2002), 17-33.
\bibitem[L]{L} R. Lazarsfeld, {\em{Positivity in algebraic geometry II}}, Ergebnisse der Mathematik Vol. 49, Springer 2004.
\bibitem[Laz]{Laz} R. Lazarsfeld, {\em{Brill-Noether-Petri without degenerations}}, Journal of Differential Geometry
\textbf{23} (1986), 299-307.
\bibitem[Log]{Log} A. Logan, {\em{The Kodaira dimension of moduli spaces of curves
with marked points}}, American Journal of Math. \textbf{125} (2003),
105-138.
\bibitem[M1]{M1} S. Mukai, {\em{Curves, $K3$ surfaces and Fano $3$-folds of genus $\leq 10$}}, in: Algebraic geometry and commutative algebra
(1988), 357-377, Kinokuniya, Tokyo.
\bibitem[M2]{M2} S. Mukai, {\em{Curves and $K3$ surfaces of genus eleven}}, in: Moduli of vector bundles, Lecture Notes in Pure and Appl. Math. 179, Dekker (1996), 189-197.
\bibitem[M3]{M3} S. Mukai, {\em{Fano $3$-folds}}, in: Complex Projective Geometry (Trieste/Bergen 1989), London Math. Society Lecture Notes Series, \textbf{179}, 255-263.
\bibitem[M4]{M4} S. Mukai, {\em{Curves and symmetric spaces I}}, American Journal of Mathematics \textbf{117} (1995), 1627-1644.
\bibitem[R]{R} M. Reid, {\em{La correspondance de McKay}}, Ast\'erisque  \textbf{276} (2002), 53-73, S\'eminaire Bourbaki 1999/2000.
\bibitem[U]{U} K. Ueno, {\em{On algebraic fiber spaces of abelian varieties}}, Math. Annalen \textbf{237} (1978), 1-22.
\bibitem[V]{V} A. Verra, {\em{The unirationality of the moduli
space of curves of genus $14$ or lower}}, Compositio Mathematica \textbf{141} (2005), 1425-1444.

\end{thebibliography}
\end{document}